\tikzset{
math to/.tip={Glyph[glyph math command=rightarrow]},
loop/.tip={Glyph[glyph math command=looparrowleft, swap]},
loop'/.tip={Glyph[glyph math command=looparrowleft]},
 weird/.tip={Glyph[glyph math command=Rrightarrow, glyph length=1.5ex]},
  pi/.tip={Glyph[glyph math command=pi, glyph length=1.5ex, glyph axis=0pt]},
}
\newcommand{\showcomments}{yes}
\renewcommand{\showcomments}{no}
\newcommand{\hidetodo}[1]
{\ifthenelse{\equal{\showcomments}{yes}}%
{#1}
}
\newsavebox{\commentbox}
\newenvironment{com}%
{\ifthenelse{\equal{\showcomments}{yes}}%
{\footnotemark
        \begin{lrbox}{\commentbox}
        \begin{minipage}[t]{1.25in}\raggedright\sffamily\tiny
        \footnotemark[\arabic{footnote}]}
{\begin{lrbox}{\commentbox}}}%
{\ifthenelse{\equal{\showcomments}{yes}}%
{\end{minipage}\end{lrbox}\marginpar{\usebox{\commentbox}}}
{\end{lrbox}}}
\newtheorem{thm}{Theorem}[section]
\newtheorem{lem}[thm]{Lemma}
\newtheorem{cor}[thm]{Corollary}
\newtheorem{prop}[thm]{Proposition}
\theoremstyle{definition}
\newtheorem{defn}[thm]{Definition}
\newtheorem{rem}[thm]{Remark}
\DeclareMathOperator{\rank}{rank}
\DeclareMathOperator{\mrank}{\widetilde{rank}}
\DeclareMathOperator{\link}{link}
\DeclareMathOperator{\stab}{Stab}
\DeclareMathOperator{\id}{Id}
\begin{document}

\title[Generalized Echelon Subgroups]{Generalized Echelon Subgroups}

\author[B.~Abdenbi]{Brahim Abdenbi}
\email{brahim.abdenbi@mail.mcgill.ca}

          \address{Dept. of Math. \& Stats.\\
                    McGill Univ. \\
                    Montreal, Quebec, Canada H3A 0B9}
          \subjclass[2020]{20E05, 20E07}
\keywords{Free groups; subgroups intersection; echelon subgroups; generalized echelon subgroups, inert subgroups; compressed subgroups; Hanna Neumann Conjecture.}
\thanks{Research supported by NSERC}
\date{\today}

\maketitle

\begin{com}
{\bf \normalsize COMMENTS\\}
ARE\\
SHOWING!\\
\end{com}

\begin{abstract}
{A subgroup $\mathcal{H}$ of a free group $\mathcal{F}$ is \textit{inert} if for any subgroup $\mathcal{K}\subset \mathcal{F}$, we have $\rank\left(\mathcal{H}\cap \mathcal{K}\right)\leq \rank\left(\mathcal{K}\right)$. It is \textit{compressed} if $\rank\left(\mathcal{H}\right)\leq \rank\left(\mathcal{K}\right)$ whenever $\mathcal{H}\leq \mathcal{K}$. In this paper, we introduce \textit{highly inert} graph immersions and show that they represent inert subgroups. We use the compressibility of inert subgroups to prove new properties on label distributions in their corresponding graphs. Our main result is the generalization of Rosenmann's \textit{echelon subgroups}, which he showed to be inert using endomorphisms of free groups. We show that the collection of echelon subgroups is a proper sub-collection of \textit{generalized echelon} subgroups. Using some techniques from Mineyev-Dicks proof of the Hanna Neumann Conjecture, we show inertness of generalized echelon subgroups, thus providing a new proof for inertness of echelon subgroups.}
 \end{abstract}

\section{Introduction}
\noindent The notion of \textit{inert} subgroups was first introduced in $1996$ by Dicks and Ventura in \cite{MR1385923}. A subgroup $\mathcal{H}$ of a free group $\mathcal{F}$ is \textit{inert} if for any subgroup $\mathcal{K}\leq \mathcal{F}$, the rank of $\mathcal{H}\cap \mathcal{K}$ is bounded from above by the rank of $\mathcal{K}$. Such subgroups arise as fixed subgroups of of injective endomorphisms $\mathcal{F}\rightarrow \mathcal{F}$. Although this area of research was largely motivated by the work of Dyer and Scott \cite{MR369529} in $1975$, interest in subgroups of free groups and their ranks dates back to earlier works by many including Nielsen, Schreier, and Howson.

In $1926$, Nielsen and Schreier  proved that subgroups of free groups are free \cite{MR0422434} \cite{MR695161}. In the case where the subgroup is of finite index, they gave an explicit formula for computing its rank, namely the Nielsen-Schreier formula. In $1954$, Howson \cite{Howson54}  showed that the intersection of finitely generated subgroups is finitely generated. In particular, he showed that if $\mathcal{H}$ and  $\mathcal{K}$ are subgroups of finite ranks, $\rank\left(\mathcal{H}\right)$ and  $\rank\left(\mathcal{K}\right)$ respectively, then the rank of their intersection is bounded above by $2\rank\left(\mathcal{H}\right)\rank\left(\mathcal{K}\right)-\rank\left(\mathcal{H}\right)-\rank\left(\mathcal{K}\right)+1$. Soon afterward, Hanna Neumann \cite{MR1092229} improved on Howson's bound by showing that the rank of the intersection is bounded from above by $2\left(\rank\left(\mathcal{H}\right)-1\right)\left(\rank\left(\mathcal{K}\right)-1\right)+1$, and further conjectured an even lower bound of $\left(\rank\left(\mathcal{H}\right)-1\right)\left(\rank\left(\mathcal{K}\right)-1\right)+1$, which came to be known as the Hanna Neumann Conjecture. This conjecture was solved by Friedman \cite{MR3289057} in $2011$ and independently by I. Mineyev \cite{MR2914871} in the same year.\\
In $1996$ Dicks and Ventura \cite{MR1385923} introduced the notion of inert subgroups and showed that fixed subgroups of injective endomorphisms of free groups are inert. In the same article they introduced the notion of \textit{compressed} subgroups. A subgroup $\mathcal{H}\leq \mathcal{F}$ is compressed if for any subgroup $\mathcal{K}\leq \mathcal{F}$, if $\mathcal{H}\leq \mathcal{K}$ then $\rank\left(\mathcal{H}\right)\leq \rank\left(\mathcal{K}\right)$. Inert subgroups are compressed. However, it remains an open question whether compressed subgroups are inert or not. In \cite{MR3245107}, Rosenmann introduced \textit{echelon} subgroups and showed that they are inert. These subgroups arise as images of special endomorphisms called \textit{$1$-generator endomorphisms}. The main result of this paper is the generalization of echelon subgroups.\\
Inert subgroups are mainly studied using endomorphisms of free groups, since they first arose in \cite{MR1385923} as fixed subgroups of injective endomorphisms of free groups. Our approach, however, is mostly graph theoretic. In Section~\ref{sec:intro}, we establish notations, and recall some classical definitions and theorems regarding graphs and free groups. In Section~\ref{sec:inert and compressed}, we introduce the notion of \textit{highly inert} immersions which will allow us to construct examples of inert subgroups. We also show some properties of compressed graphs pertaining to the distribution of labels in their edge sets. The main result in this section is the proof that compressed graphs admit maximal essential sets that map injectively into the bouquet of circles. In Section~\ref{sec:ech} we introduce the class of \textit{generalized echelon} subgroups, and show that it properly contains Rosenmann's echelon subgroups. We give a short overview of Mineyev-Dicks' proof of the Hanna Neumann conjecture, and use some of its results to show inertness of generalized echelon subgroups, thus providing a new proof for inertness of echelon subgroups.
\section{Preleminaries}\label{sec:intro}
\subsection{Graphs and Morphisms}
\noindent A \textit{directed graph} $\Gamma$ is a $1$-dimensional $CW$-complex. The sets of its \textit{vertices} and \textit{edges}, denoted by  $\Gamma^{0}$ and  $\Gamma^1$, are the $0$-cells and open $1$-cells, respectively. There exist two \textit{incidence} maps $o, \tau : \Gamma^1\rightarrow \Gamma^0$ mapping each edge $e\in \Gamma^1$ to its \textit{boundary vertices}, $o\left(e\right),\ \tau\left(e\right)$ which we refer to as the \textit{origin} and \textit{terminus} of $e$, respectively. The edge $e$ is oriented from $o\left(e\right)$ to $\tau\left(e\right)$. A \textit{morphism} of graphs $\phi:\Gamma_1\rightarrow \Gamma_2$ is a continuous map that sends vertices to vertices and edges to edges homeomorphically. If a \textit{base} vertex $v$ is chosen in $\Gamma_1$, then $\phi: \left(\Gamma_1,v\right)\rightarrow \left(\Gamma_2,\phi\left(v\right)\right)$ is a \textit{based morphism}. A \textit{bouquet of $n$ circles} is a graph $B$ with a single vertex and $n$ edges. For simplicity, a based morphism into $B$ is denoted by $\left(\Gamma,v\right)\rightarrow B$. A \textit{labelling} of a graph $\Gamma$ is a morphism $\ell:\Gamma\rightarrow B$. A morphism $\phi: \Gamma_1\rightarrow \Gamma_2$ is \textit{label preserving} if the following diagram commutes
\[
\begin{tikzcd}
\Gamma_1 \arrow[r, "\phi"] \arrow[dr, "\ell_1"]
& \Gamma_2 \arrow[d,"\ell_2"]\\
& B
\end{tikzcd}
\]
A morphism is an \textit{immersion} if it is locally injective. Unless otherwise specified, all graphs $\Gamma$ are compact and equipped with a fixed labelling $\ell:\Gamma\rightarrow B$, where $\ell$ is an immersion.\\

\noindent Given a non-negative integer $m$,  we denote by $I_m$ the graph homeomorphic to the interval $\left[0,m\right]\subset \mathbb{R}$ where $I_m^0=\left[0,m\right]\cap \mathbb{Z}$ and $I_m^1=\left\{\left(i,i+1\right)\mid 0\leq i\leq m-1\right\}$. A \textit{path} $p$ of length $m$ joining two vertices $v$ and $w$ is a morphism $p: I_m\rightarrow \Gamma$ such that $p\left(0\right)=v$ and $p\left(m\right)=w$. When $v=w$, $p$ is a \textit{closed path}. In particular, $p$ is a \textit{cycle} if it is closed and injective on $\left(0,m\right)$. A cycle of length $1$ is a \textit{loop}. A \textit{concatenation} of two paths $p: I_m\rightarrow \Gamma$ and $p': I_{m'}\rightarrow \Gamma$ is a path $\gamma: I_{m+m'}\rightarrow \Gamma$ such that $\gamma|_{\left[0,m\right]}=p$ and $\gamma|_{\left[m,m+m'\right]}=p'$. Note that this requires $p\left(m\right)=p'\left(0\right)$.\\

\noindent A \textit{subgraph} is a subcomplex. A graph is \textit{connected} if any two of its vertices can be joined by a path. A \textit{(connected) component} is a maximal connected subgraph with respect to inclusion. A \textit{forest} is a graph that contains no cycles and a connected component of a forest is a \textit{tree}.\\ 
The \textit{link} of a vertex $v\in \Gamma^0$, denoted by $\link\left(v\right)$, is the set of all length $1$ paths starting at $v$
$$\link\left(v\right)=\left\{p: I_1\rightarrow \Gamma \mid p\left(0\right)=v\right\}$$
A loop at $v$ contributes two paths to $\link\left(v\right)$. Observe that an immersion $\phi:\Gamma_1\rightarrow \Gamma_2$ induces injective maps $\link\left(v\right)\rightarrow \link\left(\phi\left(v\right)\right)$ for all $v\in\Gamma_1^0$.\\

\noindent The \textit{degree} of a vertex $v$, denoted $\deg\left(v\right)$, is the cardinality of $\link\left(v\right)$. If $|\link\left(v\right)|<\infty$ for all $v\in \Gamma^0$ then $\Gamma$ is \textit{locally finite}. In this paper, we only consider graphs whose vertices have uniformly bounded degrees, ie there exists $D\geq 0$ such that $|\link\left(v\right)|\leq D$ for all $v\in \Gamma^0$. A finite graph is a \textit{core} if all its vertices have degrees greater than or equal to $2$. Any graph that is not a forest deformation retracts to a core subgraph. A vertex $v$ is a \textit{branching} vertex if $\deg\left(v\right)\geq 3$. Given a set of vertices $S$, we define $S^*$ as $$S^*=\left\{v\in S\mid \deg\left(v\right)\geq 3\right\}.$$In particular, $\Gamma^*$ is the set of branching vertices of $\Gamma$.

\noindent The \textit{Euler characteristic} of a compact graph $\Gamma$ is $\chi\left(\Gamma\right)=\left|\Gamma^0\right|-\left|\Gamma^1\right|$. Its \textit{reduced rank}, denoted by $\mrank\left(\Gamma\right)$, is $\displaystyle\sum_{\Gamma_i\subset \Gamma} \max\left\{0,-\chi\left(\Gamma_i\right)\right\}$, where $\Gamma_i$ are the components of $\Gamma$. The \textit{rank} of a component $\Gamma_i$ is $\rank\left(\Gamma_i\right)=1-\chi\left(\Gamma_i\right)$. Observe that finite connected graphs have the same rank as their core subgraphs.\\
We now give a few results whose proofs are omitted but can be found in \cite{MR695906}.
\begin{lem} \label{imm}
Composition of immersions is an immersion.
\end{lem}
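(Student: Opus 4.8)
The statement to prove is Lemma~\ref{imm}: \emph{Composition of immersions is an immersion.}

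\medskip

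\textbf{Proof proposal.} The plan is to argue directly from the definition of an immersion as a morphism that is locally injective, using the characterization recorded in the excerpt: a morphism $\phi:\Gamma_1\to\Gamma_2$ of graphs is an immersion precisely when for each vertex $v\in\Gamma_1^0$ the induced map on links $\link(v)\to\link(\phi(v))$ is injective. So let $\phi:\Gamma_1\to\Gamma_2$ and $\psi:\Gamma_2\to\Gamma_3$ be immersions, and consider the composite $\psi\circ\phi:\Gamma_1\to\Gamma_3$. First I would check that $\psi\circ\phi$ is a morphism at all: it is continuous, it sends vertices to vertices and edges to edges, and it does so homeomorphically on each edge, since each of $\phi$ and $\psi$ does and a composite of homeomorphisms on the corresponding cells is again a homeomorphism. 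This is the routine bookkeeping step and presents no difficulty.

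The substantive step is to verify local injectivity. Fix a vertex $v\in\Gamma_1^0$. The morphism $\phi$ induces an injection $\phi_*:\link(v)\to\link(\phi(v))$, and $\psi$ induces an injection $\psi_*:\link(\phi(v))\to\link(\psi(\phi(v)))$. I would then observe that the map on links induced by the composite factors as the composite of the induced maps, i.e. $(\psi\circ\phi)_* = \psi_*\circ\phi_*$ as maps $\link(v)\to\link(\psi(\phi(v)))$. This factorization is the crux, and it holds because the action of a morphism on a length-one path $p\in\link(v)$ is simply post-composition $p\mapsto \phi\circ p$, so that applying $\psi$ afterwards yields $\psi\circ(\phi\circ p) = (\psi\circ\phi)\circ p$ by associativity of composition. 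Since a composite of two injective maps is injective, $(\psi\circ\phi)_*$ is injective for every $v$, which is exactly the condition for $\psi\circ\phi$ to be an immersion.

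\textbf{Anticipated obstacle.} There is no deep difficulty here; the only point requiring genuine care is the functoriality identity $(\psi\circ\phi)_* = \psi_*\circ\phi_*$ on links, which rests on correctly interpreting an element of $\link(v)$ as a based morphism $p:I_1\to\Gamma_1$ and checking that the link-maps are given by post-composition. Once that is pinned down, the result is immediate from the fact that injectivity is preserved under composition. I would keep the write-up short, stating the factorization explicitly and invoking the preservation of injectivity, since the bulk of the content is definitional.
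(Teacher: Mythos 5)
Your proof is correct: the paper omits the proof of this lemma (deferring to Stallings' \emph{Topology of finite graphs}), and your argument --- that the link map induced by a composite factors as the composite of the induced link maps, so local injectivity is preserved because a composite of injections is injective --- is exactly the standard argument intended there. No gaps; the write-up could be as short as you propose.
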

\begin{lem}
Given an immersion of graphs $\phi: \Gamma_1 \rightarrow \Gamma_2$ and a vertex $v\in \Gamma^0$, the induced homomorphism of fundamental groups $$\phi_*: \pi_1\left(\Gamma_1,v\right)\rightarrow \pi_1\left(\Gamma_2, \phi\left(v\right)\right)$$
is injective.
\end{lem}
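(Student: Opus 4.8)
The plan is to reduce the statement to the purely combinatorial fact that an immersion carries reduced edge-paths to reduced edge-paths. Recall that $\pi_1\left(\Gamma_1,v\right)$ is free, and that every homotopy class of loops based at $v$ has a unique \emph{reduced} representative, i.e. a closed path $p:I_m\rightarrow\Gamma_1$ with $p(0)=p(m)=v$ that does not \emph{backtrack}: for each $0<i<m$, the two length-$1$ subpaths $p|_{[i-1,i]}$ and $p|_{[i,i+1]}$ do not traverse a single edge in opposite directions. This reduced representative is the constant path exactly when the class is trivial. Thus, to prove $\phi_*$ injective it suffices to show that for every nonconstant reduced closed path $p$ in $\Gamma_1$ the composite $\phi\circ p$ is again a nonconstant reduced closed path in $\Gamma_2$; for then a nontrivial $\gamma\in\pi_1(\Gamma_1,v)$, represented by such a $p$, satisfies $\phi_*(\gamma)\neq 1$.

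The key step is to translate backtracking into the language of links and then invoke the injectivity of the induced map on links. At an interior vertex $w=p(i)$, reversing the incoming subpath $p|_{[i-1,i]}$ yields an element of $\link(w)$, and the outgoing subpath $p|_{[i,i+1]}$ is another element of $\link(w)$; the path $p$ backtracks at $i$ precisely when these two elements of $\link(w)$ coincide. Since $p$ is reduced, they are distinct for every such $i$. As noted in the excerpt, the immersion $\phi$ induces an injective map $\link(w)\rightarrow\link(\phi(w))$, and this map sends the incoming-reversal and outgoing link elements of $p$ at $w$ to the corresponding link elements of $\phi\circ p$ at $\phi(w)$. Injectivity therefore keeps them distinct, so $\phi\circ p$ does not backtrack at $i$. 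As $\phi$ is a morphism it sends each edge homeomorphically to an edge, so $\phi\circ p$ is a closed path of the same length $m$; in particular it is nonconstant whenever $p$ is.

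Assembling these observations completes the argument: a nontrivial element of $\pi_1(\Gamma_1,v)$ is represented by a nonconstant reduced closed path $p$, its image $\phi\circ p$ is a nonconstant reduced closed path based at $\phi(v)$, and such a path represents a nontrivial element of the free group $\pi_1(\Gamma_2,\phi(v))$. Hence $\ker\phi_*$ is trivial. I expect the only real obstacle to be the careful bookkeeping in the link step---matching the incoming-reversal and outgoing subpaths of $p$ with those of $\phi\circ p$ under the induced link map---since everything else is standard free-group-on-a-graph formalism; one must also be mindful that reducedness is a condition on interior vertices only, so no assumption at the basepoint $v$ is needed.
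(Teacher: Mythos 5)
Your proof is correct, and it is the standard argument (immersions preserve reduced edge-paths, via injectivity of the induced link maps) found in the reference the paper cites for this lemma; the paper itself omits the proof. No gaps: the reduction to reduced representatives and the link-based non-backtracking step are exactly the expected route.
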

\subsection{Foldings}
\noindent If $\ell:\Gamma\rightarrow B$ is not an immersion, then we call a pair of edges $\left(e_1, e_2\right)\in \Gamma^1\times \Gamma^1$ \textit{admissible} if
\begin{enumerate}
\item $o\left(e_1\right)=o\left(e_2\right)$ or $\tau\left(e_1\right)=\tau\left(e_2\right)$, and
\item $\ell\left(e_1\right)=\ell\left(e_2\right)$.
\end{enumerate}
A \textit{folding} of $\Gamma$ is a map $$f:\Gamma\rightarrow \Gamma/\left(e_1\sim e_2\right)$$
realized by identifying $e_1$ and $e_2$. For example, if $\ell\left(e_1\right)=\ell\left(e_2\right)=a$, then a folding is the procedure shown in Figure~\ref{fig:graph_6}.
\begin{figure}[t]\centering
\includegraphics[width=.4\textwidth]{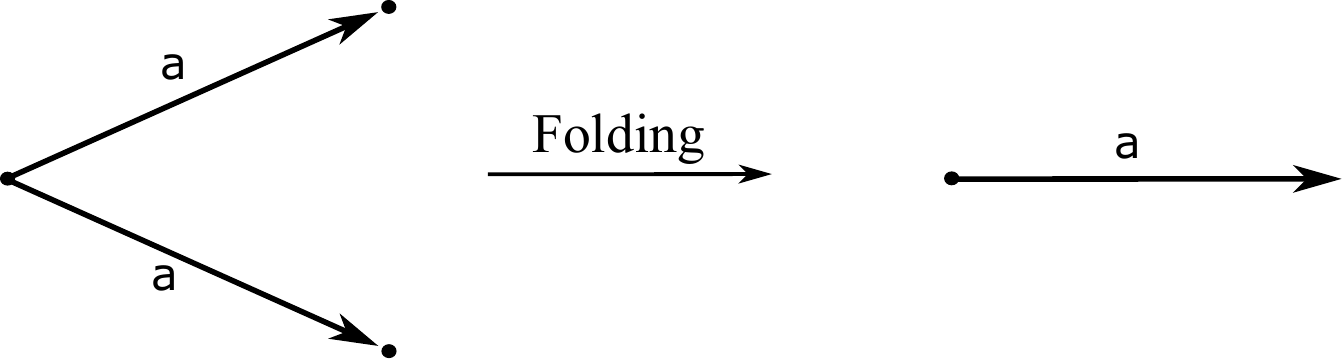}
\caption[Folding]{\label{fig:graph_6}
Folding of two edges}
\end{figure}
\begin{lem}\label{lem:folding}
If  $\left(e_1, e_2\right)\in \Gamma^1\times \Gamma^1$ is an admissible pair, then the folding $$f:\Gamma\rightarrow \Gamma/\left(e_1\sim e_2\right)$$ is $\pi_1$-surjective. Therefore, finite compositions of foldings are also $\pi_1$-surjective.
\end{lem}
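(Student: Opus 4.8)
The plan is to prove the single-fold case first and then obtain the statement for finite compositions by functoriality. So I fix an admissible pair $(e_1,e_2)$ and study the quotient map $f\colon\Gamma\to\Gamma'$, where $\Gamma'=\Gamma/(e_1\sim e_2)$. After possibly reversing orientations I may assume $o(e_1)=o(e_2)=v$; the case $\tau(e_1)=\tau(e_2)$ is handled identically by reversing all orientations. Write $\bar{e}=f(e_1)=f(e_2)$ for the folded edge. The first observation to record is the combinatorial anatomy of $f$: it maps $\Gamma^0$ onto $(\Gamma')^0$, sends every edge homeomorphically onto an edge, and is injective on vertices except that it identifies the single pair $\tau(e_1),\tau(e_2)$ into one vertex $\bar{w}$ of $\Gamma'$. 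Consequently every edge of $\Gamma'$ has a preimage edge in $\Gamma$, unique except for $\bar{e}$, for which I fix the preferred preimage $e_1$.

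The key step is a path lifting with correction. To prove that $f_\ast$ is surjective I would take an arbitrary class in $\pi_1(\Gamma',f(v))$, represent it by an edge-loop $\alpha=a_1a_2\cdots a_k$, and lift it edge by edge using the preferred preimages, producing oriented edges $\tilde a_1,\dots,\tilde a_k$ of $\Gamma$. The only way this sequence can fail to be an honest edge-path is if, at some junction, the terminus of $\tilde a_i$ and the origin of $\tilde a_{i+1}$ are distinct vertices of $\Gamma$ that are nevertheless identified by $f$; by the previous paragraph this can only be the pair $\{\tau(e_1),\tau(e_2)\}$. At each such junction I would splice in the correction path $e_1^{-1}e_2$ (or $e_2^{-1}e_1$), which runs from $\tau(e_1)$ to $\tau(e_2)$ inside $\Gamma$ and whose $f$-image is the backtrack $\bar{e}^{-1}\bar{e}$. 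The result is a genuine based loop $\tilde\alpha$ in $\Gamma$, and since each spliced segment maps to a backtrack, $f(\tilde\alpha)$ is homotopic rel basepoint to $\alpha$. Hence $f_\ast[\tilde\alpha]=[\alpha]$, so $f_\ast$ is surjective. (When $\tau(e_1)=\tau(e_2)$ already holds in $\Gamma$, no vertices are newly identified, no corrections are needed, and the naive lift maps exactly onto $\alpha$.)

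For the compositional statement I would then invoke functoriality of $\pi_1$: if $f=f_n\circ\cdots\circ f_1$ is a finite composition of foldings, then $f_\ast=(f_n)_\ast\circ\cdots\circ(f_1)_\ast$ is a composition of surjections by the single-fold case, hence surjective.

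I expect the main obstacle to be the bookkeeping in the lifting step rather than any deep topology: one must verify precisely which vertices $f$ identifies, so that every discontinuity of the naive lift is exactly a jump within $\{\tau(e_1),\tau(e_2)\}$; confirm that the inserted correction paths keep $\tilde\alpha$ a based loop; and dispose of the degenerate configurations in which $e_1$ or $e_2$ is a loop, or the basepoint coincides with $\tau(e_1)$ or $\tau(e_2)$. In the latter situation one either appends a final correction segment or simply chooses the basepoint away from the fold, the fundamental group being independent of the basepoint within a component. Once these incidence details are pinned down, the homotopy-theoretic content reduces to the single fact that the correction segments map to null-homotopic backtracks.
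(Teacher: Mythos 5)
The paper does not actually prove this lemma: it is stated without proof in the Foldings subsection, following the remark that such standard results are omitted and can be found in Stallings' paper. So there is no in-paper argument to compare against, and your proposal supplies a proof where the paper supplies only a citation. Your argument is correct. The anatomy of a single fold is as you describe (edges map homeomorphically to edges, and the only possible vertex identification is $\tau(e_1)\sim\tau(e_2)$), so the naive edge-by-edge lift of a loop in $\Gamma/(e_1\sim e_2)$ can only jump within that pair, the spliced correction $e_1^{-1}e_2$ maps to the backtrack $\bar e^{-1}\bar e$, and the resulting based loop pushes forward to a loop homotopic rel basepoint to the original; functoriality then handles finite compositions. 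You also correctly flag and dispose of the degenerate cases (bigon, loop edges, basepoint at $\tau(e_i)$). The more common textbook route is to observe that when $\tau(e_1)\neq\tau(e_2)$ the fold is a homotopy equivalence (it collapses an edge pair rel a spanning configuration), and when $\tau(e_1)=\tau(e_2)$ it is the quotient killing one free generator, surjective in either case; that argument is shorter but relies on recognizing the homotopy type of the quotient, whereas yours is an explicit, purely combinatorial lifting argument that needs no such identification. Either is acceptable; yours is complete modulo the routine bookkeeping you yourself identify.
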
 
\noindent In particular, if $\Gamma$ is connected, then $\rank\left(\Gamma\right)\geq \rank\left(\Gamma/\left(e_1\sim e_2\right)\right)$.
\subsection{Graphs and Subgroups of Free Groups}\label{sec:graphs}
\noindent Let $\mathcal{F}=\pi_1B$. Finitely generated subgroups of $\mathcal{F}$ can be represented  by immersions of finite graphs into $B$. We summarize the algorithmic construction of such graphs below and refer the reader to 
\cite{MR695906} \cite{MR1882114}
 for more details.\\

\noindent Let $\mathcal{H}=\left<h_1,\ldots,h_r\right>\leq \mathcal{F}$ be a finitely generated subgroup. We start by first taking a disjoint union of $r$ circles such that for each $1\leq i\leq r$, the circle $C_i$ is subdivided to form a closed labelled cycle which reads as the generator $h_i$ starting from some fixed vertex $v_i$. We construct the based graph $$\left(H,v\right)=\frac{\displaystyle\bigsqcup_{i=1}^rC_i}{v_1\sim v_2\sim\ldots\sim v_r}$$
where all chosen vertices are identified to a single vertex $v$. If $\left(H,v\right)$ immerses into $B$ then we are done, otherwise perform all possible foldings until $\left(H,v\right)\rightarrow B$ is an immersion. The resulting graph is the desired one. In particular, $\pi_1\left(H,v\right)=\mathcal{H}$.\\
Henceforth, we will denote groups with script letters and their corresponding graphs with regular capital letters; for example the subgroup $\mathcal{H}$ is represented by the graph $H$. 
\begin{rem}
If the generators $h_i$ are cyclically reduced then all vertices of $H$ have degrees $\geq 2$ except for possibly the base vertex. 
\end{rem}

\subsection{Fiber Product of Immersions}
\noindent Let $\phi: H\rightarrow \Gamma$ and $\psi: K \rightarrow \Gamma$ be two immersions of finite graphs. The \textit{pullback} of these two maps 
\[
\begin{tikzcd}
A \arrow[r,"\alpha"] \arrow[d,black,"\beta"]
& K \arrow[d,"\psi"] \\
H \arrow[r, "\phi"]
& \Gamma
\end{tikzcd}
\]
also called the \textit{fiber product} is the graph $$A=H\otimes_{\Gamma}K$$
defined as follows
\begin{enumerate}
\item $A^0=H^0\times K^0$
\item $A^1=\left\{\left(e_1,e_2\right)\in H^1\times K^1\mid \phi\left(e_1\right)=\psi\left(e_2\right)\right\}$ 
\end{enumerate}
The immersions $\alpha: A\rightarrow K,\  \beta:A\rightarrow H$ are \textit{projections}.\\
We are mainly interested in the case where $\Gamma=B$ and $\phi,\ \psi$ are immersions. 
\begin{thm}\label{thm:fiber product thm}
Let 
\[
\begin{tikzcd}
A \arrow[r,"\alpha"] \arrow[d,black, "\beta"]
& K \arrow[d,"\ell_K"] \\
H \arrow[r, "\ell_H"]
& B
\end{tikzcd}
\]
be a pullback diagram of graph immersions $\ell_H$ and $\ell_K$. Let $v'=\left(u,v\right)\in A^0$ be such that $\alpha\left(v'\right)=v$ and $\beta\left(v'\right)=u$.  Define $F=\ell_K\circ \alpha=\ell_H\circ \beta$. 
Then $$F_*\left(\pi_1\left(A,v'\right)\right)=\ell_{K^*}\left(\pi_1\left(H,u\right)\right)\cap \ell_{H^*}\left(\pi_1\left(K,v\right)\right)$$
\end{thm}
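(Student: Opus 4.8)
The plan is to use the standard dictionary between elements of $\mathcal{F}=\pi_1 B$ and reduced paths, exploiting that the labellings are immersions. Because $\ell_H$ is an immersion, the induced map on $\pi_1$ is injective, and a reduced closed path at $u$ in $H$ is carried by $\ell_H$ to a reduced path in $B$ (an immersion is locally injective, so it cannot create backtracking). Consequently the image $(\ell_H)_*\bigl(\pi_1(H,u)\bigr)$ is precisely the set of reduced words in $\mathcal{F}$ that can be read off as a closed path in $H$ beginning and ending at $u$; the analogous description holds for $(\ell_K)_*\bigl(\pi_1(K,v)\bigr)$ with $K$ and $v$. Thus $g\in\mathcal{F}$ lies in the right-hand intersection if and only if its reduced representative $w\colon I_m\to B$ lifts to a closed path at $u$ in $H$ and simultaneously to a closed path at $v$ in $K$.

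The conceptual heart of the argument is a path-lifting correspondence for the fiber product. Since $A^1=\{(e_1,e_2)\in H^1\times K^1 \mid \ell_H(e_1)=\ell_K(e_2)\}$, an edge of $A$ lying over a given edge of $B$ is exactly a pair consisting of an edge of $H$ and an edge of $K$, both lying over that same edge of $B$. Hence a lift of a path $w$ in $B$ to $A$ starting at $v'=(u,v)$ is the same data as a pair $(p,q)$, where $p$ is a lift of $w$ to $H$ starting at $u$ and $q$ is a lift of $w$ to $K$ starting at $v$; under this correspondence $\beta\circ\tilde\gamma=p$, $\alpha\circ\tilde\gamma=q$, and $F\circ\tilde\gamma=w$. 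I would record this as a preliminary step before proving either inclusion.

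For the inclusion $F_*\bigl(\pi_1(A,v')\bigr)\subseteq (\ell_H)_*\bigl(\pi_1(H,u)\bigr)\cap(\ell_K)_*\bigl(\pi_1(K,v)\bigr)$, I take a closed path $\tilde\gamma$ in $A$ based at $v'$ and push it forward by the two projections: $\beta\circ\tilde\gamma$ is a closed path at $u=\beta(v')$ in $H$ and $\alpha\circ\tilde\gamma$ is a closed path at $v=\alpha(v')$ in $K$. The defining identity $F=\ell_H\circ\beta=\ell_K\circ\alpha$ then gives $F_*[\tilde\gamma]=(\ell_H)_*[\beta\circ\tilde\gamma]=(\ell_K)_*[\alpha\circ\tilde\gamma]$, exhibiting $F_*[\tilde\gamma]$ as an element of both images, hence of their intersection. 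For the reverse inclusion I take $g$ in the intersection, represent it by its reduced path $w\colon I_m\to B$, and use the first paragraph to obtain a closed lift $p$ at $u$ in $H$ and a closed lift $q$ at $v$ in $K$; both read the same reduced word $w$, so in particular they share the length $m$. The correspondence of the second paragraph assembles $(p,q)$ into a closed path $\tilde\gamma=(p,q)$ in $A$ based at $v'=(u,v)$ with $F\circ\tilde\gamma=w$, whence $F_*[\tilde\gamma]=g$.

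The step I expect to be the main obstacle is verifying that the diagonal assembly $\tilde\gamma=(p,q)$ genuinely lands in $A$ at the level of edges, i.e.\ that at each step the pair of traversed edges $(e_1,e_2)$ satisfies the fiber-product condition $\ell_H(e_1)=\ell_K(e_2)$. This is exactly where the immersion hypothesis is indispensable: because $\ell_H$ and $\ell_K$ are immersions they carry the reduced closed paths $p$ and $q$ to reduced paths in $B$, so neither lift introduces spurious backtracking, both have length equal to that of the reduced word $w$, and the $i$-th edge of $p$ and the $i$-th edge of $q$ both lie over the $i$-th edge of $w$, forcing $\ell_H(e_1)=\ell_K(e_2)$. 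Once this well-definedness and the base-point bookkeeping $v'=(u,v)$ are checked, the two inclusions close the argument.
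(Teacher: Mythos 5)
The paper does not actually prove this theorem: it is stated as a known result (Stallings' fiber-product theorem, following the references \cite{MR695906}, \cite{MR1882114}), so there is no in-paper argument to compare against. Your proof is the standard one and is correct. The forward inclusion via $F_*[\tilde\gamma]=(\ell_H)_*[\beta\circ\tilde\gamma]=(\ell_K)_*[\alpha\circ\tilde\gamma]$ is immediate, and your identification of the crux --- that for $g$ in the intersection the \emph{reduced} closed paths $p$ at $u$ and $q$ at $v$ both map to the same reduced edge-path $w$ in $B$, because immersions send reduced paths to reduced paths and reduced representatives in $\mathcal{F}$ are unique --- is exactly the point that makes the diagonal assembly $(p,q)$ land in $A^1=\{(e_1,e_2)\mid \ell_H(e_1)=\ell_K(e_2)\}$ edge by edge. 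Two small bookkeeping items you should make explicit if you write this up: (i) the $i$-th steps of $p$ and $q$ must traverse their edges with the \emph{same orientation} as the $i$-th step of $w$ (this is automatic from the lifting, but it is what guarantees the pair of steps is a legitimate step in $A$, not just a pair of edges over the same edge of $B$); and (ii) the endpoint matching $o((e_1,e_2))=(o(e_1),o(e_2))$ under $A^0=H^0\times K^0$, which is what makes the concatenation $(p,q)$ a genuine path closing up at $v'=(u,v)$. Neither is a gap, just the verification you already flagged as "base-point bookkeeping." (Incidentally, the subscripts in the paper's displayed conclusion are garbled --- $\ell_{K^*}$ is applied to $\pi_1(H,u)$ --- and your reading $(\ell_H)_*(\pi_1(H,u))\cap(\ell_K)_*(\pi_1(K,v))$ is the intended one.)
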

\noindent In other words, the based component of the fiber product is precisely the graph representing the intersection of the fundamental groups of $K$ and $H$. Note that the three subgroups all lie inside $\mathcal{F}$. 
\begin{rem}\label{rem:cardinality of fibers}
The cardinality of intersections of fibers is at most $1$:
$$|\alpha^{-1}\left(v\right)\cap \beta^{-1}\left(u\right)|\leq 1\hspace{5mm} \text{and}\hspace{5mm} |\alpha^{-1}\left(e_2\right)\cap \beta^{-1}\left(e_1\right)|\leq 1$$

\noindent That is, if two vertices (edges) in $A$ project to the same vertex (edge) in $K$ then they must project to distinct vertices (edges) in $H$. This is because both $\ell_K\circ \alpha: A\rightarrow B$ and $\ell_H\circ \beta: A\rightarrow B$ are immersions.
\end{rem}
\noindent Two immediate corollaries to this theorem are
\begin{cor}
The intersection of finitely generated subgroups is itself finitely generated. .
\end{cor}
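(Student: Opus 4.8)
The plan is to read this off directly from the fiber product theorem (Theorem~\ref{thm:fiber product thm}), which identifies the intersection with the fundamental group of a \emph{finite} graph. First I would reduce to the case of two subgroups $\mathcal{H}, \mathcal{K} \leq \mathcal{F}$, since an arbitrary finite intersection is obtained by iterating the two-factor case (the intersection of two finitely generated subgroups, once shown to be finitely generated, can be intersected again with the next subgroup). By the construction summarized in Section~\ref{sec:graphs}, finite generation of $\mathcal{H}$ and $\mathcal{K}$ lets me represent them by immersions $\ell_H \colon H \rightarrow B$ and $\ell_K \colon K \rightarrow B$ of \emph{finite} graphs, with $\pi_1\left(H,u\right)=\mathcal{H}$ and $\pi_1\left(K,v\right)=\mathcal{K}$ for suitable basepoints $u$ and $v$.

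Next I would form the fiber product $A = H \otimes_B K$ and make the one observation that carries all the weight: $A$ is finite. Indeed, by definition $A^0 = H^0 \times K^0$ is a product of two finite sets, and $A^1 \subseteq H^1 \times K^1$ is a subset of a product of finite sets, so both $A^0$ and $A^1$ are finite. Setting $v' = \left(u,v\right) \in A^0$ and $F = \ell_K \circ \alpha = \ell_H \circ \beta$, Theorem~\ref{thm:fiber product thm} then gives
\[
F_*\bigl(\pi_1\left(A,v'\right)\bigr) = \mathcal{H} \cap \mathcal{K}.
\]
The component of $A$ containing $v'$ is a finite connected graph, hence its fundamental group is free of finite rank. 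Since $\mathcal{H} \cap \mathcal{K}$ is the homomorphic image under $F_*$ of this finitely generated group, it is itself finitely generated, as claimed.

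Given the theorem, there is no genuine obstacle here; the only point requiring care is bookkeeping rather than ideas. Specifically, I must be careful that $A$ is in general \emph{disconnected}, so that the statement concerns the based component at $v' = \left(u,v\right)$ and not the whole fiber product, and that the intersection is realized at this particular basepoint pair. (One may also note that $F$ is an immersion, being a composite of immersions by Lemma~\ref{imm}, so $F_*$ is in fact injective and $\mathcal{H} \cap \mathcal{K} \cong \pi_1\left(A,v'\right)$; but injectivity is not needed for finite generation, only the finiteness of $A$.) I would close by remarking that this argument yields finite generation but no effective bound on the rank of the intersection—bounds of Howson and Hanna Neumann type require the sharper reduced-rank estimates on $A$ discussed later in the paper.
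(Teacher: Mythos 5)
Your argument is correct and is exactly the one the paper intends: the corollary is stated as an immediate consequence of Theorem~\ref{thm:fiber product thm}, and your proof simply fills in the (finite) fiber-product construction, noting that the based component of $H\otimes_B K$ is a finite graph whose fundamental group realizes $\mathcal{H}\cap\mathcal{K}$. The cautionary remarks about disconnectedness and the basepoint are appropriate and consistent with the paper's own discussion following the theorem.
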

\noindent This result was first obtained by Howson in \cite{Howson54}.
\begin{cor}
 If subgroups $\mathcal{H}, \mathcal{K}\leq \mathcal{F}$ are finitely generated then as $w$ varies over $\mathcal{F}$, the subgroups  $\mathcal{K}\cap w^{-1}\mathcal{H}w$ belong to only a finite number of conjugacy classes of $\mathcal{F}$.  
\end{cor}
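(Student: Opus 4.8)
The plan is to read all the relevant conjugacy classes off a \emph{single} fiber product of the two finite graphs representing $\mathcal{H}$ and $\mathcal{K}$. Let $H$ and $K$ be the finite graphs immersing into $B$ with $\pi_1(H,u)=\mathcal{H}$ and $\pi_1(K,v)=\mathcal{K}$, and form $A=K\otimes_B H$. Since $A^0=K^0\times H^0$ and $A^1\subseteq K^1\times H^1$ are both finite, $A$ is a finite graph, so it has only finitely many connected components $A_1,\dots,A_n$. The whole strategy is to prove two things: (i) each component $A_j$ contributes at most one conjugacy class of subgroups of the form $\mathcal{K}\cap w^{-1}\mathcal{H}w$, and (ii) every such subgroup is conjugate to one arising from some $A_j$. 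Together these bound the number of conjugacy classes by $n<\infty$.

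For (i), fix a base vertex $a=(v_j,u_j)\in A_j^0$ projecting to $v_j\in K^0$ and $u_j\in H^0$. By Theorem~\ref{thm:fiber product thm}, $F_*\left(\pi_1(A_j,a)\right)=\pi_1(K,v_j)\cap\pi_1(H,u_j)$ inside $\mathcal{F}$. Choosing a path in $K$ from $v$ to $v_j$ and one in $H$ from $u$ to $u_j$ gives $\pi_1(K,v_j)=g^{-1}\mathcal{K}g$ and $\pi_1(H,u_j)=h^{-1}\mathcal{H}h$ for suitable $g,h\in\mathcal{F}$; conjugating the intersection by $g$ rewrites it as $\mathcal{K}\cap w^{-1}\mathcal{H}w$ with $w=hg^{-1}$. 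Thus $\pi_1(A_j)$ represents a subgroup of the desired form up to conjugacy, and since $A_j$ is connected, moving the base vertex within it only conjugates this subgroup. Hence each component determines a single conjugacy class.

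Part (ii) is the main obstacle, and it is where the finiteness of $A$ must be reconciled with the fact that $w$ ranges over the whole of $\mathcal{F}$: a given $w$ need not be spelled by any path inside the finite graph $H$, so $w^{-1}\mathcal{H}w$ is not visibly present in $A$. I would resolve this by a lift-and-fold argument. Represent $w^{-1}\mathcal{H}w$ by the graph $H_w$ obtained from $H$ by attaching a subdivided segment reading $w$ at $u$, taking the free endpoint as base vertex, and fold $H_w$ to an immersion into $B$; by Lemma~\ref{lem:folding} folding preserves the represented subgroup, and since $w^{-1}\mathcal{H}w$ is conjugate to $\mathcal{H}$ the core of $H_w$ is isomorphic to that of $H$. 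A nontrivial element $g\in\mathcal{K}\cap w^{-1}\mathcal{H}w$ is then simultaneously a closed path in $K$ and, after cyclic reduction, a closed path in the core of $H$; the pair of these lifts traces a closed path in $A$, which therefore lies in some component $A_j$. Running this over a free basis of the intersection (finitely generated by the previous corollary) shows that its core embeds in $A_j$, giving one containment, while Theorem~\ref{thm:fiber product thm} supplies the reverse containment. Hence $\mathcal{K}\cap w^{-1}\mathcal{H}w$ is realized, up to conjugacy, by $A_j$.

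To finish, the subgroups $\mathcal{K}\cap w^{-1}\mathcal{H}w$ that are trivial all lie in the single conjugacy class of the trivial subgroup, while the nontrivial ones are distributed among the finitely many components $A_1,\dots,A_n$ by (i) and (ii). Therefore only finitely many conjugacy classes occur. The genuinely delicate point is (ii): ensuring that conjugates by elements lying outside the finite graphs are nonetheless seen by the finite fiber product, and the tail-and-fold construction together with cyclic reduction is precisely what forces every such intersection to appear as a closed path in $A$.
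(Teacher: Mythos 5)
The paper offers no proof of this corollary---it is stated as an immediate consequence of Theorem~\ref{thm:fiber product thm}---and your argument is exactly the standard derivation it has in mind: the finitely many components of the finite fiber product $K\otimes_B H$ account for all conjugacy classes of the intersections, with the tail-and-fold construction handling conjugators $w$ not visible in the finite core graphs. Your reasoning is correct; the one imprecision is in step (ii), where you should strip only the common tail $t$ of the folded $H_w$ (the same $t$ for every nontrivial element, since every reduced closed path at the valence-one basepoint must traverse the entire tail) rather than fully cyclically reducing each element, so that all basis elements of $\mathcal{K}\cap w^{-1}\mathcal{H}w$ are simultaneously realized as closed paths at a \emph{single} base vertex $(v'',u')$ of a single component $A_j$, giving $\mathcal{K}\cap w^{-1}\mathcal{H}w=t^{-1}\bigl(\ell_{K*}\pi_1(K,v'')\cap\ell_{H*}\pi_1(H,u')\bigr)t$ on the nose.
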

\subsection{The Combinatorial Gauss-Bonnet Theorem for Graphs}\label{subsection:GBT}
\noindent Given a graph $\Gamma$, one can define the curvature at a vertex $v\in \Gamma^0$ according to the following formula:
$$\kappa\left(v\right)=\pi\left(2-deg\left(v\right)\right)$$
When $\Gamma$ is finite, the \textit{Combinatorial Gauss-Bonnet} theorem relates the Euler characteristic of $\Gamma$ to the curvature in the following way
\begin{thm}\label{thm:GBT} 
$$2\pi\chi\left(\Gamma\right)=\displaystyle\sum_{v\in \Gamma^0}\kappa\left(v\right)$$
\end{thm}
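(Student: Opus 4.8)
The plan is to prove the Combinatorial Gauss-Bonnet Theorem by a direct computation that reorganizes the vertex sum $\sum_{v\in\Gamma^0}\kappa\left(v\right)$ into a count of vertices and edges, matching the definition $\chi\left(\Gamma\right)=\left|\Gamma^0\right|-\left|\Gamma^1\right|$. First I would unfold the definition of curvature and split the sum:
\[
\sum_{v\in\Gamma^0}\kappa\left(v\right)=\pi\sum_{v\in\Gamma^0}\left(2-\deg\left(v\right)\right)=2\pi\left|\Gamma^0\right|-\pi\sum_{v\in\Gamma^0}\deg\left(v\right).
\]
The heart of the argument is then to evaluate $\sum_{v\in\Gamma^0}\deg\left(v\right)$.

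The key step is a handshaking-type identity: I would show that $\sum_{v\in\Gamma^0}\deg\left(v\right)=2\left|\Gamma^1\right|$. Since the excerpt defines $\deg\left(v\right)=\left|\link\left(v\right)\right|$ as the number of length-$1$ paths starting at $v$, I would argue that each edge $e$ contributes exactly two such length-$1$ paths to the total, one based at $o\left(e\right)$ and one based at $\tau\left(e\right)$. The delicate bookkeeping here is the loop case: the excerpt explicitly notes that a loop at $v$ contributes two paths to $\link\left(v\right)$, so a loop (whose origin and terminus coincide) still contributes $2$ to $\deg\left(v\right)$ and hence $2$ to the total, keeping the identity uniform across all edges. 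Summing the per-edge contribution of $2$ over all edges in $\Gamma^1$ gives $\sum_{v}\deg\left(v\right)=2\left|\Gamma^1\right|$.

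Substituting this into the previous display yields
\[
\sum_{v\in\Gamma^0}\kappa\left(v\right)=2\pi\left|\Gamma^0\right|-2\pi\left|\Gamma^1\right|=2\pi\left(\left|\Gamma^0\right|-\left|\Gamma^1\right|\right)=2\pi\chi\left(\Gamma\right),
\]
which is the desired equality. Finiteness of $\Gamma$ is what makes each of these sums finite and legitimizes the interchange of summation order used in the handshaking count.

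I expect the main obstacle to be purely a matter of care rather than depth: making the edge-counting argument rigorous in the presence of loops, where the naive ``one endpoint origin, one endpoint terminus'' description degenerates. The cleanest way to handle this uniformly is to phrase the double count over the incidence structure, i.e.\ count pairs $\left(v,\text{germ of an edge at }v\right)$ by viewing each length-$1$ path in some $\link\left(v\right)$ as a choice of an edge together with an orientation away from $v$; every edge then yields exactly two such germs regardless of whether its endpoints coincide, which is precisely the loop convention already recorded in the preliminaries. With that convention in hand the computation is routine, so I would keep the proof short.
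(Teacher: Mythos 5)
Your proof is correct. The paper states this theorem without proof (it is a standard fact recorded in the preliminaries), and your argument --- expanding $\kappa\left(v\right)=\pi\left(2-\deg\left(v\right)\right)$ and invoking the handshaking identity $\sum_{v\in\Gamma^0}\deg\left(v\right)=2\left|\Gamma^1\right|$, with the loop case handled exactly as the paper's convention on $\link\left(v\right)$ dictates --- is precisely the expected derivation.
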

\newpage

\section{Inert and Compressed Subgroups}\label{sec:inert and compressed}
\noindent The \textit{rank} of a group $\mathcal{G}$, denoted by $\rank\left(\mathcal{G}\right)$, is:
$$\rank\left(\mathcal{G}\right)=\inf\left\{\ \left|S\right|\ :\ \mathcal{G}=\left<S\right>\ \right\}$$
When $\mathcal{G}$ is free, its rank coincides with the rank of the graph representing it. 
\subsection{Inert Subgroups}
\noindent The following definition is due to Dicks-Ventura \cite{MR1385923}.
\begin{defn}\label{defn:inert subgroup}
A subgroup $\mathcal{H}\leq \mathcal{F}$ is \textit{inert} if for any subgroup $\mathcal{K}\leq \mathcal{F}$ we have: $$\rank\left(\mathcal{H}\cap \mathcal{K}\right)\leq \rank\left(\mathcal{K}\right).$$
\end{defn}
\noindent This can be formulated in terms of graphs as follows: an immersed based graph $\left(H,v\right)\rightarrow B$ is \textit{inert} if given any immersed based graph $\left(K,w\right)\rightarrow B$ we have $$\rank\left(H\otimes_{B}K,\left(v,w\right)\right)\leq \rank\left(K,w\right).$$
Henceforth, all graphs representing subgroups are understood to be based graphs. Thus for example, instead of writing $\left(H,v\right)$ we will simply write $H$.
\begin{lem}\label{lem:inert iff}
Let $H\rightarrow B$ be an immersion representing the subgroup $\mathcal{H}$. Then
$$H\rightarrow B\ \text{inert}\ \iff\ \mathcal{H}\ \text{inert}$$
\end{lem}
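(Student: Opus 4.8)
The plan is to unwind both definitions and notice that they differ only in the range of the quantified subgroup $\mathcal{K}$: the group-theoretic definition (Definition~\ref{defn:inert subgroup}) quantifies over \emph{all} subgroups of $\mathcal{F}$, whereas the graph-theoretic reformulation quantifies only over subgroups representable by compact immersed based graphs, that is, over finitely generated subgroups. The bridge between the two formulations is Theorem~\ref{thm:fiber product thm}, which identifies the based component of the fiber product with the graph of the intersection.

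First I would record the key translation. Given immersed based graphs $H$ and $K$ over $B$ with $\pi_1(H,v)=\mathcal{H}$ and $\pi_1(K,w)=\mathcal{K}$, put $A=H\otimes_B K$ and $v'=(v,w)$. The composite $F=\ell_K\circ\alpha=\ell_H\circ\beta$ is an immersion by Lemma~\ref{imm}, so $F_*$ is injective and, by Theorem~\ref{thm:fiber product thm}, $\pi_1(A,v')\cong F_*(\pi_1(A,v'))=\mathcal{H}\cap\mathcal{K}$. Since the rank of a finite connected graph coincides with the rank of its free fundamental group, this yields the identity $\rank(A,v')=\rank(\mathcal{H}\cap\mathcal{K})$, while $\rank(K,w)=\rank(\mathcal{K})$. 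With these two equalities in hand, the graph inequality $\rank(H\otimes_B K,(v,w))\le\rank(K,w)$ and the group inequality $\rank(\mathcal{H}\cap\mathcal{K})\le\rank(\mathcal{K})$ become literally the same statement for each fixed finitely generated $\mathcal{K}$.

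The forward implication ($\mathcal{H}$ inert $\Rightarrow H\to B$ inert) is then immediate: any immersed based graph $K$ represents some finitely generated $\mathcal{K}=\pi_1(K,w)$, and the translation converts the assumed group inequality into the required graph inequality. For the converse ($H\to B$ inert $\Rightarrow \mathcal{H}$ inert) I would split on whether $\mathcal{K}\le\mathcal{F}$ is finitely generated. If it is, the construction of Section~\ref{sec:graphs} produces an immersed based graph $K$ with $\pi_1(K,w)=\mathcal{K}$, and the translation transfers the assumed graph inequality back to $\rank(\mathcal{H}\cap\mathcal{K})\le\rank(\mathcal{K})$. If $\mathcal{K}$ is not finitely generated, then, being a subgroup of a free group, it is itself free of infinite rank, so $\rank(\mathcal{K})=\infty$ and the inequality holds vacuously.

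The only delicate point — and the sole place where the two definitions are not verbatim identical — is precisely this last step: the graph formulation only ``sees'' finitely generated test subgroups, so the infinitely generated case must be dispatched separately. This is harmless exactly because infinite rank dominates any value of $\rank(\mathcal{H}\cap\mathcal{K})$, so no genuine obstruction arises; the entire substance of the lemma is the rank identity supplied by the fiber product theorem, and I expect the write-up to be short.
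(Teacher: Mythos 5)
Your proof is correct and follows essentially the same route as the paper's: both directions reduce to the rank identity $\rank\left(H\otimes_B K\right)=\rank\left(\mathcal{H}\cap\mathcal{K}\right)$ supplied by Theorem~\ref{thm:fiber product thm}, combined with $\rank\left(K\right)=\rank\left(\mathcal{K}\right)$. The only difference is that you explicitly dispatch the case of an infinitely generated test subgroup $\mathcal{K}$, a point the paper's proof passes over silently; your observation that the inequality is then vacuous because $\rank\left(\mathcal{K}\right)$ is infinite is the right way to close that (minor) gap.
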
 
\begin{proof}
$\left(\Rightarrow\right)$ Let $\mathcal{K}$ be a subgroup of $\mathcal{F}$ and $K\rightarrow B$ be the immersed graph representing it. By Theorem~\ref{thm:fiber product thm}, $\pi_1\left(H\otimes_BK\right)=\mathcal{H}\cap\mathcal{K}$ where $H\otimes_BK\rightarrow B$ is the immersed based component of the fiber product. Since $H\rightarrow B$ is inert, we have
$$\rank\left(\mathcal{H}\cap\mathcal{K}\right)=\rank\left(H\otimes_BK\right)\ \leq\ \rank\left(K\right)=\rank\left(\mathcal{K}\right).$$
$\left(\Leftarrow\right)$ Similarly, let $K\rightarrow B$ be an immersed graph. Then
\begin{equation*}
\rank\left(H\otimes_BK\right)=\rank\left(\mathcal{H}\cap\mathcal{K}\right)\ \leq\ \rank\left(\mathcal{K}\right)=\rank\left(K\right). \qedhere
\end{equation*}
\end{proof}
\begin{defn}\label{defn:inert set}
Let $\ell:H\rightarrow B$ be an immersed graph and $S\subset H^0$. Then $S$ is \textit{inert} if for any branching vertex $v\in S^*$ we have 
$$\displaystyle\sum_{w\in \left(S^*-\left\{v\right\}\right)}\left|\ell\left(\link\left(v\right)\right)\cap\ell\left(\link\left(w\right)\right)\right|\leq 2$$
This says that the directed edges at each branching vertex appear at most twice (with multiplicity) among the remaining branching vertices. See Figure~\ref{fig:graph_8}\\
\end{defn}
\begin{figure}[t]\centering
\includegraphics[width=.4\textwidth]{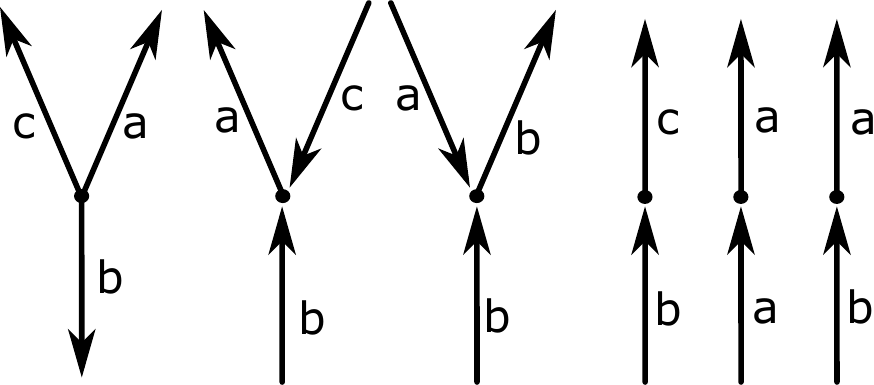}
\caption[Example of an inert set of vertices.]{\label{fig:graph_8}
Example of an inert set of vertices.}
\end{figure}
\begin{defn}\label{defn:highly inert}
An immersion of graphs $\phi: H\rightarrow K$ is \textit{highly inert} if for any vertex $w\in K^0$ the set $\phi^{-1}\left(w\right)$ is inert. In particular, $\phi: H\rightarrow B$ is highly inert if and only if $H^0$ is inert.
\end{defn}
\begin{lem}\label{lem:composition of highly inert}
Consider the following commutative diagram of graph immersions:
\[
\begin{tikzcd}[row sep=tiny]
& H \arrow[dd] \\
A \arrow[ur,"\beta"] \arrow[dr] & \\
& B
\end{tikzcd}
\]
Let $S\subset A^0$ be such that $\beta|_S$ is injective. Then
$$\beta\left(S\right)\ \text{inert}\ \Rightarrow\ S\ \text{inert}.$$
\end{lem}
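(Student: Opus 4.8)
The plan is to unwind Definition~\ref{defn:inert set} for the set $S\subset A^0$ and to bound each of the relevant sums by the corresponding sum for $\beta(S)\subset H^0$, which is controlled by hypothesis. Write $\ell_A\colon A\to B$ and $\ell_H\colon H\to B$ for the two labellings, so that commutativity of the diagram reads $\ell_A=\ell_H\circ\beta$ (the composite being an immersion by Lemma~\ref{imm}). Note that inertness of $S$ is to be verified with respect to $\ell_A$, whereas inertness of $\beta(S)$ is the given, stated with respect to $\ell_H$.

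First I would record two elementary consequences of $\beta$ being an immersion. Since an immersion induces injective maps $\link(v)\to\link(\beta(v))$, we have $\deg_A(v)\le\deg_H(\beta(v))$ for every $v\in A^0$; in particular every branching vertex of $S$ has branching image, so $\beta(S^*)\subseteq(\beta(S))^*$. Moreover, from $\ell_A=\ell_H\circ\beta$ together with $\beta(\link(v))\subseteq\link(\beta(v))$ we obtain the containment
\[
\ell_A(\link(v))=\ell_H(\beta(\link(v)))\subseteq\ell_H(\link(\beta(v)))
\]
for every $v\in A^0$.

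Now fix a branching vertex $v\in S^*$. For each $w\in S^*-\{v\}$, applying the containment above at $v$ and at $w$ yields
\[
\ell_A(\link(v))\cap\ell_A(\link(w))\subseteq\ell_H(\link(\beta(v)))\cap\ell_H(\link(\beta(w))),
\]
whence $|\ell_A(\link(v))\cap\ell_A(\link(w))|\le|\ell_H(\link(\beta(v)))\cap\ell_H(\link(\beta(w)))|$. Because $\beta|_S$ is injective, the assignment $w\mapsto\beta(w)$ maps $S^*-\{v\}$ injectively into $(\beta(S))^*-\{\beta(v)\}$; summing the displayed inequality over $w$ and enlarging the index set to all of $(\beta(S))^*-\{\beta(v)\}$ (legitimate since each summand is nonnegative) gives
\[
\sum_{w\in S^*-\{v\}}\!\big|\ell_A(\link(v))\cap\ell_A(\link(w))\big|\le\sum_{w'\in(\beta(S))^*-\{\beta(v)\}}\!\big|\ell_H(\link(\beta(v)))\cap\ell_H(\link(w'))\big|\le 2,
\]
the final bound being exactly the inertness of $\beta(S)$ at the branching vertex $\beta(v)\in(\beta(S))^*$. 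As $v\in S^*$ was arbitrary, $S$ is inert.

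The argument is essentially bookkeeping, so I expect no serious obstacle; the one point to handle with care is the passage between the index sets $S^*$ and $(\beta(S))^*$. Non-branching vertices of $S$ whose $\beta$-images happen to be branching in $H$, as well as branching vertices of $\beta(S)$ lying outside $\beta(S^*)$, can only enlarge the right-hand sum, and nonnegativity of the summands renders this harmless; injectivity of $\beta|_S$ is what guarantees that distinct $w$ contribute distinct indices $\beta(w)$ and that $\beta(v)$ is genuinely excluded. Verifying $\beta(S^*)\subseteq(\beta(S))^*$ through the degree inequality $\deg_A\le\deg_H\circ\beta$ is the only place where the immersion hypothesis on $\beta$ is actually used.
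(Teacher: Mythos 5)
Your proposal is correct and follows essentially the same route as the paper's proof: both establish $\ell(\link(v))\subseteq\ell(\link(\beta(v)))$ from the immersion property and then compare the two sums in Definition~\ref{defn:inert set} term by term using injectivity of $\beta|_S$, the only cosmetic difference being that the paper argues by contradiction while you argue directly (and you spell out the containment $\beta(S^*)\subseteq(\beta(S))^*$ and the enlargement of the index set, which the paper leaves implicit).
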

\begin{proof}
The restriction of $\beta$ to the links of $S$ is an injective map $$\displaystyle\bigsqcup_{w\in S}\link\left(w\right)\hookrightarrow \displaystyle\bigsqcup_{v\in \beta\left(S\right)}\link\left(v\right).$$
Then $\beta\left(\link\left(w\right)\right)\subset \link\left(\beta\left(w\right)\right)$ for all $w\in S$. Let $v_i=\beta\left(w_i\right)$, where \\$S^*=\left\{w_1,\ldots,w_m\right\}$. Then, 
\begin{equation}\label{eq:link eq}
\left|\ell\left(link\left(v_i\right)\right)\cap\ell\left(\link\left(v_j\right)\right)\right|\ \geq\ \left|\ell\left(link\left(w_i\right)\right)\cap\ell\left(\link\left(w_j\right)\right)\right|.
\end{equation}
Inequality~\ref{eq:link eq} follows from the fact that if $X_1, X_2$ are sets such that $X_1'\subset X_1$ and $X_2'\subset X_2$, then $\left|X_1\cap X_2\right|\ \geq\ \left|X_1'\cap X_2'\right|$.\\ 
Now suppose there exists an index $1\leq i\leq m$ for which $$\displaystyle\sum_{i\neq j}\left|\ell\left(\link\left(w_i\right)\right)\cap\ell\left(\link\left(w_j\right)\right)\right|> 2$$ 
Then
$$\displaystyle\sum_{i\neq j}\left|\ell\left(link\left(v_i\right)\right)\cap\ell\left(\link\left(v_j\right)\right)\right|\ \geq\ \displaystyle\sum_{i\neq j}\left|\ell\left(\link\left(w_i\right)\right)\cap\ell\left(\link\left(w_j\right)\right)\right|> 2$$
which is a contradiction.
\end{proof}
\begin{lem}\label{lem:degree of w}
Let $\phi: H\rightarrow K$ be an immersion of core graphs and let $w\in K^0$ be such that $S=\phi^{-1}\left(w\right)$ is inert. Then
\begin{equation}\label{eq:degree of w}
\deg\left(w\right)-2\ \geq\ \displaystyle\sum_{v\in S}\left(\deg\left(v\right)-2\right).
\end{equation}
Consequently, 
\begin{equation}\label{eq:curvature of w}
\kappa\left(w\right)\leq \displaystyle\sum_{v\in S}\kappa\left(v\right).
\end{equation}

\end{lem}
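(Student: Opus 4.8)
The plan is to reduce the inequality to the branching vertices of $S$ and then establish it by a short double-counting argument over the edge-labels occurring at $w$. Since $H$ and $K$ are core graphs, every vertex has degree at least $2$, so a non-branching vertex $v \in S \setminus S^*$ satisfies $\deg(v) = 2$ and contributes $0$ to the right-hand side of \eqref{eq:degree of w}; hence $\sum_{v \in S}(\deg(v)-2) = \sum_{v \in S^*}(\deg(v)-2)$ and it suffices to bound the sum over $S^* = \{v_1,\dots,v_m\}$. The cases $m \le 1$ I would dispose of at once: for $m=0$ the claim is just $\deg(w) \ge 2$, which holds because $K$ is a core graph, and for $m=1$ it reduces to $\deg(v_1) \le \deg(w)$, immediate from the fact that the immersion $\phi$ restricts to an injection $\link(v_1) \hookrightarrow \link(w)$.

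For the main case $m \ge 2$ I would set $L = \ell(\link(w))$ and $A_i = \ell(\link(v_i))$. Because $\ell$ is an immersion on each of $K$ and $H$, distinct directed edges carry distinct labels, so $|L| = \deg(w)$ and $|A_i| = \deg(v_i)$; and since $\phi$ is label-preserving and sends each $v_i$ to $w$, we get $A_i \subseteq L$. The point of this translation is that the numbers $|A_i \cap A_j|$ are exactly the overlap terms appearing in the definition of an inert set (Definition~\ref{defn:inert set}). The engine of the argument is then the first Bonferroni inequality $\left|\bigcup_i A_i\right| \ge \sum_i |A_i| - \sum_{i<j}|A_i \cap A_j|$, which rearranges to $\sum_i |A_i| \le |L| + \sum_{i<j}|A_i\cap A_j|$.

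It remains to control the overlaps through inertness. Summing the inequality $\sum_{j \ne i}|A_i \cap A_j| \le 2$ over all $i$ gives $2\sum_{i<j}|A_i\cap A_j| \le 2m$, so $\sum_{i<j}|A_i\cap A_j| \le m$. Substituting into the displayed bound yields $\sum_i \deg(v_i) = \sum_i |A_i| \le \deg(w) + m$, whence $\sum_i(\deg(v_i)-2) \le \deg(w) - m \le \deg(w)-2$ since $m \ge 2$, which is \eqref{eq:degree of w}. The curvature statement \eqref{eq:curvature of w} then follows formally: multiplying \eqref{eq:degree of w} by $-\pi$ and using $\kappa(u) = \pi(2-\deg(u))$ turns the degree inequality directly into $\kappa(w) \le \sum_{v \in S}\kappa(v)$.

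The step I expect to require the most care is not any single inequality but the bookkeeping that identifies the combinatorial quantities $|A_i \cap A_j|$ with the link-overlaps of Definition~\ref{defn:inert set}: here one must use that $\phi$ preserves labels and that $\ell$ restricts to a bijection between $\link(w)$ and its label set, and one must handle a loop at $w$ correctly, remembering that such a loop contributes two oppositely oriented, and hence distinctly labelled, elements to $\link(w)$ so that the count $|L| = \deg(w)$ remains valid.
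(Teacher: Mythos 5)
Your proof is correct, and it proves the same local statement the paper proves --- that the links of the vertices of $S$, pushed into $\link(w)$ via the label map, cannot overlap too much --- but the counting mechanism is genuinely different. The paper reduces the inequality to an abstract combinatorial claim: each $\link(v_i)$ is split into a ``private'' part $X_i'$ (labels not shared with any other $\link(v_j)$) and a leftover of size at most $2$, and the conclusion $|X|\geq 2+\sum_i(|X_i|-2)$ is asserted from the injectivity conditions without further proof. You instead work directly with the label sets $A_i=\ell(\link(v_i))\subseteq L=\ell(\link(w))$ and run the first Bonferroni inequality, converting the inert-set condition $\sum_{j\neq i}|A_i\cap A_j|\leq 2$ into $\sum_{i<j}|A_i\cap A_j|\leq m$ and hence $\sum_i\deg(v_i)\leq\deg(w)+m$. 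What your route buys is completeness and transparency: the paper's combinatorial lemma is left unverified and the correspondence between its hypotheses and the inertness condition is implicit, whereas every step of yours is checkable, you isolate exactly where the ``$-2$'' comes from (it is $-m\leq -2$ for $m\geq 2$, with $m\leq 1$ handled separately from the core-graph hypothesis and link injectivity), and you correctly flag the one point where care is needed, namely that $|\ell(\link(w))|=\deg(w)$ even in the presence of loops because the two traversals of a loop carry inverse labels. The paper's formulation is slightly more general-looking as a standalone set-theoretic fact, but for this lemma your argument is the cleaner one.
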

\noindent See Figure~\ref{fig:graph_9} for an illustration.
\begin{figure}[t]\centering
\includegraphics[width=.7\textwidth]{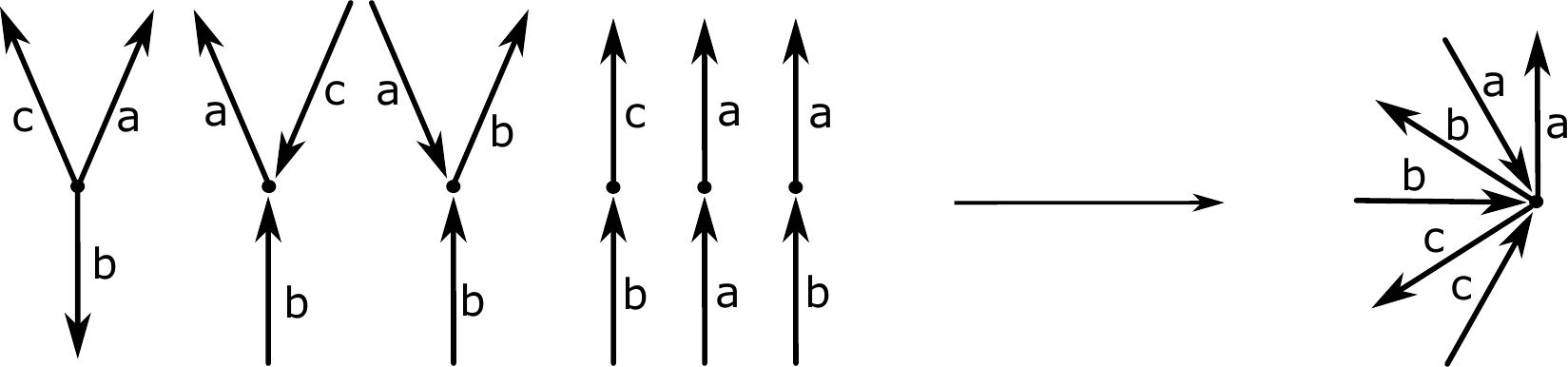}
\caption[Immersion of an inert set $S$ ]{\label{fig:graph_9}
Immersion of an inert set $S$.}
\end{figure}
\begin{proof}
If $S=\emptyset$, then both inequalities hold trivially since $K$ is a core graph. Otherwise, Inequality~\ref{eq:degree of w} is a consequence of the following argument:\\
Let $\left\{X_i\ :\ 0\leq i\leq m\right\}$ and $\left\{X'_i\ :\ X'_i\subset X_i,\ \left|X_i-X'_i\right|\leq 2,\ 0\leq i\leq m\right\}$ be collections of disjoint finite sets. Let $f:\displaystyle\bigsqcup_{i=1}^mX_i\rightarrow X$ be a function such that for all $0\leq i, j\leq m$ we have 
\begin{enumerate}
\item $f|_{X_i}$ is injective,
\item $f:\displaystyle\bigsqcup_{i=1}^mX'_i\rightarrow X$ is injective, and
\item $f\left(X_i-X_i'\right)\cap f\left(X_j'\right)=\emptyset$.
\end{enumerate}
 Then $$\left|X\right|\ \geq\ 2+\displaystyle\sum_{i=1}^m\left(\left|X_i\right|-2\right).$$
Inequality~\ref{eq:curvature of w} follows from Inequality~\ref{eq:degree of w} as shown below
\begin{align*}
\kappa\left(w\right)=\pi \left(2-\deg\left(w\right)\right)&\leq \pi \left(2-\displaystyle\sum_{v\in S} \left(\deg\left(v\right)-2\right)-2\right)\\
&=\pi \displaystyle\sum_{v\in S}\left(2-\deg\left(v\right)\right)\\
&=\displaystyle\sum_{v\in S}\kappa\left(v\right) \qedhere
\end{align*}
\end{proof}
\begin{prop}\label{prop:highly inert is inert}
Let $H\rightarrow B$ be a highly inert immersion. Then $H\rightarrow B$ is inert.
\end{prop}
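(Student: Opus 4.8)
The plan is to prove inertness of a highly inert immersion $H \to B$ by testing it against an arbitrary immersed based graph $K \to B$, forming the fiber product $A = H \otimes_B K$, and controlling the reduced rank of its based component via the Combinatorial Gauss-Bonnet Theorem (Theorem~\ref{thm:GBT}). By Lemma~\ref{lem:inert iff} it suffices to show $\rank(A) \leq \rank(K)$ for the based component, and since rank equals $1 - \chi$ and is unchanged under passing to the core, I would first replace $H$, $K$, and $A$ by their core subgraphs, noting that the restricted projections remain immersions by Lemma~\ref{imm}. The key object is the projection $\alpha : A \to K$: I claim this is a \emph{highly inert} immersion, i.e.\ for every vertex $w \in K^0$ the fiber $\alpha^{-1}(w)$ is an inert subset of $A^0$.

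First I would establish that claim. Fix $w \in K^0$ and consider a vertex $v' = (u, k) \in \alpha^{-1}(w)$, so $k = w$ and $u \in H^0$. By Remark~\ref{rem:cardinality of fibers}, distinct vertices of $A$ lying over the same vertex of $K$ must project to distinct vertices of $H$ under $\beta$; hence $\beta$ restricts to an \emph{injective} map on the fiber $\alpha^{-1}(w)$. Since $H \to B$ is highly inert, its whole vertex set $H^0$ is inert (Definition~\ref{defn:highly inert}), so in particular $\beta(\alpha^{-1}(w)) \subseteq H^0$ is inert. Now Lemma~\ref{lem:composition of highly inert}, applied to the commuting triangle with $A \to H \to B$ and the injective restriction $\beta|_{\alpha^{-1}(w)}$, transfers inertness back downstairs: $\alpha^{-1}(w)$ is inert. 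This verifies that $\alpha$ is highly inert.

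With that in hand, the rank bound follows by summing curvature. For each $w \in K^0$, Lemma~\ref{lem:degree of w}, applied to the immersion $\alpha : A \to K$ of core graphs with the inert fiber $S = \alpha^{-1}(w)$, yields $\kappa(w) \leq \sum_{v \in \alpha^{-1}(w)} \kappa(v)$. Summing over all $w \in K^0$ and using that the fibers $\{\alpha^{-1}(w)\}_{w \in K^0}$ partition $A^0$, I obtain
\[
\sum_{w \in K^0} \kappa(w) \;\leq\; \sum_{w \in K^0} \sum_{v \in \alpha^{-1}(w)} \kappa(v) \;=\; \sum_{v \in A^0} \kappa(v).
\]
Applying Theorem~\ref{thm:GBT} to both the left-hand and right-hand sides converts these curvature sums into Euler characteristics, giving $2\pi\,\chi(K) \leq 2\pi\,\chi(A)$, hence $\chi(A) \geq \chi(K)$, and therefore $\rank(A) = 1 - \chi(A) \leq 1 - \chi(K) = \rank(K)$ on the relevant (based) components.

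The main obstacle I anticipate is bookkeeping around connectedness and basepoints rather than any deep difficulty: the Gauss-Bonnet argument as written compares Euler characteristics of the \emph{entire} graphs $A$ and $K$, whereas inertness concerns only the based component $(A,(v,w))$ together with $\rank(K)$. To make the curvature inequality deliver the statement about the single based component, I would restrict attention to the based components from the outset and check that passing to cores does not disturb the fiber-product structure or create vertices of degree $< 2$ that would spoil the application of Lemma~\ref{lem:degree of w}; the hypothesis that the graphs are cores is exactly what guarantees every curvature $\kappa(v) = \pi(2 - \deg(v))$ is nonpositive and that empty fibers cause no trouble. Once the reduction to based cores is handled cleanly, the inequality is immediate from the two lemmas and Gauss-Bonnet.
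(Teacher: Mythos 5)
Your proposal is correct and follows essentially the same route as the paper: pass to based cores, use Remark~\ref{rem:cardinality of fibers} to get injectivity of $\beta$ on the fibers of $\alpha$, transfer inertness of $H^0$ to each fiber via Lemma~\ref{lem:composition of highly inert}, and then sum the curvature inequality of Lemma~\ref{lem:degree of w} over $K^0$ and apply Theorem~\ref{thm:GBT}. The only cosmetic difference is that the paper restricts the outer sum to $K^0\cap\alpha(A^0)$ and notes that the discarded terms are nonpositive because $K$ is a core, whereas you absorb the empty fibers directly into the summation; the two are equivalent.
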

\begin{proof}
Let $K\rightarrow B$ be an immersion with $K$ a compact core graph. Let $A$ be the based core of $H\otimes_B K$. Consider the commutative diagram:
\[
\begin{tikzcd}
A \arrow[r,"\alpha"] \arrow[d,black,"\beta"]
& K \arrow[d] \\
H \arrow[r]
& B
\end{tikzcd}
\]
Let $w\in \alpha\left(A^0\right)$. By Remark~\ref{rem:cardinality of fibers}, the map $\beta|_{\alpha^{-1}\left(w\right)}$ is injective. Since $\beta$ is an immersion, by Lemma~\ref{lem:composition of highly inert}, the set $\alpha^{-1}\left(w\right)$ is inert. By Lemma~\ref{lem:degree of w}, we have:  
$$\kappa\left(w\right)\leq \displaystyle\sum_{v\in \alpha^{-1}\left(w\right)}\kappa\left(v\right).$$
Then by Theorem~\ref{thm:GBT}, we have the following, where the first inequality is an equality when $\alpha$ is surjective, and the second inequality is by Lemma~\ref{lem:degree of w}: 
\begin{align*}
2\pi \chi\left(K\right)&=\displaystyle\sum_{w\in K^0}\kappa\left(w\right)\\
&\leq\displaystyle\sum_{w\in \left(K^0\cap \alpha\left(A^0\right)\right)}\kappa\left(w\right)\\
&\leq \displaystyle\sum_{w\in \left(K^0\cap \alpha\left(A^0\right)\right)}\left(\displaystyle\sum_{v\in \left(\alpha^{-1}\left(w\right)\cap A^0\right)}\kappa\left(v\right)\right)\\
&=\displaystyle\sum_{v\in A^0}\kappa\left(v\right)\\
&=2\pi \chi\left(A\right).
\end{align*}
On the other hand, both $A$ and $K$ are connected graphs, so

$$2\pi \chi\left(K\right)=2\pi \left(1-\rank\left(K\right)\right)\leq 2\pi \chi\left(A\right)=2\pi\left(1-\rank\left(A\right)\right).$$
Therefore, 
\begin{equation*}
\rank\left(A\right)\leq \rank\left(K\right).\qedhere
\end{equation*}
\end{proof} 
\begin{cor}\label{cor:one branching vertex}
Let $H\rightarrow B$ be the immersed graph. Then $$\left|H^*\right|\leq 1\ \Rightarrow\ H\ \text{is inert}.$$
In particular, cycles and immersed bouquets of circles are inert.
 \end{cor}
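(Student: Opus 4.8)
The plan is to deduce this directly from Proposition~\ref{prop:highly inert is inert} by checking that the hypothesis $\left|H^*\right|\leq 1$ forces the labelling $\ell:H\rightarrow B$ to be highly inert. By Definition~\ref{defn:highly inert}, the immersion $H\rightarrow B$ is highly inert precisely when the full vertex set $H^0$ is an inert set in the sense of Definition~\ref{defn:inert set}, so it suffices to verify that $H^0$ is inert.

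To see this, observe that the branching vertices of the set $H^0$ are exactly the branching vertices of $H$, that is, $\left(H^0\right)^*=H^*$. The inertness condition of Definition~\ref{defn:inert set} only constrains vertices $v\in \left(H^0\right)^*$, and for each such $v$ it bounds the sum
$$\sum_{w\in \left(\left(H^0\right)^*-\left\{v\right\}\right)}\left|\ell\left(\link\left(v\right)\right)\cap\ell\left(\link\left(w\right)\right)\right|.$$
When $\left|H^*\right|\leq 1$ there is at most one branching vertex, so for the unique candidate $v$ (if it exists) the index set $\left(H^0\right)^*-\left\{v\right\}=H^*-\left\{v\right\}$ is empty and the sum above is an empty sum, hence $\leq 2$. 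Thus the condition holds vacuously, $H^0$ is inert, $H\rightarrow B$ is highly inert, and Proposition~\ref{prop:highly inert is inert} gives that $H\rightarrow B$ is inert.

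For the final assertions, I would simply exhibit the two families as instances of this criterion. A cycle has every vertex of degree $2$, so it has no branching vertices and $H^*=\emptyset$. An immersed bouquet of circles is represented by a graph with a single vertex, so $\left|H^0\right|=1$ and \emph{a fortiori} $\left|H^*\right|\leq 1$. In both cases the hypothesis $\left|H^*\right|\leq 1$ is met, so each such graph is inert.

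The argument presents no real obstacle: the substantive content is entirely contained in Proposition~\ref{prop:highly inert is inert}, and this corollary is the degenerate case in which the inert-set inequality reduces to an empty sum. The only point requiring care is the bookkeeping identification $\left(H^0\right)^*=H^*$ together with the correct reading of the defining inequality over an empty index set, which I would state explicitly to avoid ambiguity.
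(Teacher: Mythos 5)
Your proof is correct and follows essentially the same route as the paper: both reduce the statement to the highly-inert machinery of Proposition~\ref{prop:highly inert is inert}, the only cosmetic difference being that the paper phrases the vacuous inertness condition for the fibers of $H\otimes_B K\rightarrow K$ while you phrase it for $H^0$ directly via Definition~\ref{defn:highly inert}. Your explicit handling of the empty sum and the identification $\left(H^0\right)^*=H^*$ is a fine (and slightly more careful) way to make the paper's one-line remark precise.
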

 \noindent This is easy to see since given any immersed graph $K\rightarrow B$, the fibers of the immersion $H\otimes_BK\rightarrow K$ contain at most one branching vertex and thus form inert sets.
\begin{rem}\label{rem:rank two HNC}
Any graph of rank $2$ must have either one branching vertex of degree $4$ or two branching vertices of degree $3$. By Corollary~\ref{cor:one branching vertex}, the former is inert and the latter is inert whenever its two branching vertices have distinct links. So the proposition provides a partial proof of the Hanna Neumann conjecture when one of the subgroups has rank $2$. \end{rem}


\subsection{Compressed Subgroups} 
\begin{defn}\label{defn:compressed subgroup}
A subgroup $\mathcal{H}\leq \mathcal{F}$ is \textit{compressed} in $\mathcal{F}$ if the following holds for all subgroups $\mathcal{K}\leq\mathcal{F}$:
$$\mathcal{H}\leq \mathcal{K}\  \Rightarrow\  \rank\left(\mathcal{H}\right)\leq \rank\left(\mathcal{K}\right).$$
Likewise, an immersed connected graph $H\rightarrow B$ is \textit{compressed} if for any immersed graph $K\rightarrow B$ we have  $$\left(H\rightarrow K\right)\  \Rightarrow\ \left(\rank\left(H\right)\leq \rank\left(K\right)\right)$$
where $H\rightarrow K$ is an immersion.
\end{defn}
\noindent Note that $H$ fails to be compressed if and only if there exists a surjective immersion $H\twoheadrightarrow K$ such that $\rank\left(H\right)>\rank\left(K\right)$.\\
One can see that inert subgroups are compressed. Indeed, suppose we have subgroups $\mathcal{H}\leq \mathcal{K}\leq \mathcal{F}$ with $\mathcal{H}$ inert, then if $\rank\left(\mathcal{H}\right)>\rank\left(\mathcal{K}\right)$ then $\mathcal{H}\cap\mathcal{K}=\mathcal{H}$ and thus $\rank\left(\mathcal{H}\cap \mathcal{K}\right)>\rank\left(\mathcal{K}\right)$, contradicting the inertness of $H$.
\begin{lem}\label{lem:compressed iff}
Let $H\rightarrow B$ be an immersion representing the subgroup $\mathcal{H}$. Then
$$H\rightarrow B\ \text{compressed}\ \iff\ \mathcal{H}\ \text{compressed}$$
\end{lem}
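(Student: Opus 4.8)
The plan is to mirror the proof of Lemma~\ref{lem:inert iff}, transporting between the group-theoretic and graph-theoretic formulations by means of the Stallings dictionary that identifies subgroup inclusions with based, label-preserving immersions of the associated graphs. Concretely, the whole content of the lemma rests on the following correspondence, which I would isolate first: for finitely generated $\mathcal{H},\mathcal{K}\leq\mathcal{F}$ with representing graphs $H\rightarrow B$ and $K\rightarrow B$, one has $\mathcal{H}\leq\mathcal{K}$ if and only if there is a based, label-preserving immersion $H\rightarrow K$ commuting with the labellings to $B$. Once this is available, both directions of the lemma are formal, exactly as in Lemma~\ref{lem:inert iff}.

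I would establish the correspondence in two halves. For the forward implication, given $\mathcal{H}\leq\mathcal{K}$ I would lift the labelling $\ell_H:H\rightarrow B$ through the cover of $B$ associated to $\mathcal{K}$: since $\pi_1 H=\mathcal{H}\leq\mathcal{K}$ the based lift exists, it is locally injective because $\ell_H$ is an immersion while the covering projection is a local homeomorphism, and its image lies in the core $K$ because a $\pi_1$-injective map sends cycles to cycles, so no edge of the core of $H$ can be carried into a hanging tree of the cover. For the reverse implication, a based label-preserving immersion $\phi:H\rightarrow K$ induces an injection $\phi_*$ on fundamental groups (by the $\pi_1$-injectivity of immersions recorded earlier), and commutativity over $B$ gives $\mathcal{H}=(\ell_K)_*\phi_*\pi_1 H\subseteq(\ell_K)_*\pi_1 K=\mathcal{K}$. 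Throughout I use that $(\ell_H)_*$ and $(\ell_K)_*$ are injective, so that $\rank\mathcal{H}=\rank H$ and $\rank\mathcal{K}=\rank K$; in the reverse direction I take $\mathcal{K}$ to be the subgroup carried by the based component of $K$, so that $\rank\mathcal{K}\leq\rank K$ even when $K$ is disconnected.

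With this dictionary in hand the two implications are immediate. For $(\Rightarrow)$, assume $H\rightarrow B$ is compressed and let $\mathcal{K}\geq\mathcal{H}$ have representing graph $K\rightarrow B$; the inclusion yields an immersion $H\rightarrow K$, so graph-compressedness gives $\rank H\leq\rank K$, that is $\rank\mathcal{H}\leq\rank\mathcal{K}$. For $(\Leftarrow)$, assume $\mathcal{H}$ is compressed and let $K\rightarrow B$ be an immersed graph admitting an immersion $H\rightarrow K$; the dictionary gives $\mathcal{H}\leq\mathcal{K}=\pi_1 K$, so subgroup-compressedness gives $\rank\mathcal{H}\leq\rank\mathcal{K}$, that is $\rank H\leq\rank K$.

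The substantive step is the forward half of the correspondence, namely producing the immersion $H\rightarrow K$ from the abstract inclusion $\mathcal{H}\leq\mathcal{K}$ and checking that the lift genuinely lands in the core $K$ rather than in the ambient cover; this is where the hypothesis that $H$ is a core graph and the $\pi_1$-injectivity of immersions do the work. The reverse half and the two final implications are then purely bookkeeping. Since this correspondence is standard Stallings theory (cf.\ \cite{MR695906,MR1882114}), I expect the proof to read as a short analogue of Lemma~\ref{lem:inert iff} rather than as a genuinely new argument.
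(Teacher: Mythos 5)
Your proposal is correct and is precisely the argument the paper leaves implicit when it says the proof "is similar to the proof of Lemma~\ref{lem:inert iff}": both directions reduce to the standard Stallings correspondence between inclusions $\mathcal{H}\leq\mathcal{K}$ and based label-preserving immersions $H\rightarrow K$, with the only substantive step being the lift of $H$ into the cover of $B$ associated to $\mathcal{K}$ and the check that it lands in the core. Your treatment of the disconnected/non-based case of $K$ is a reasonable extra precaution and does not diverge from the intended argument.
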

\noindent The proof is similar to the proof of Lemma~\ref{lem:inert iff}.
The following lemma is taken from \cite{MR2097438}.
\begin{lem}\label{lem:free factor}
Let $\mathcal{H}\leq \mathcal{F}$ and suppose $\mathcal{H}=\mathcal{A}\ast \mathcal{B}$. If $\mathcal{H}$ is compressed then $\mathcal{A}$ is compressed.
\end{lem}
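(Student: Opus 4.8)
The plan is to deduce compression of the free factor $\mathcal{A}$ directly from compression of $\mathcal{H}=\mathcal{A}\ast\mathcal{B}$, by enlarging any overgroup of $\mathcal{A}$ so that it becomes an overgroup of all of $\mathcal{H}$. First I would fix an arbitrary subgroup $\mathcal{K}$ with $\mathcal{A}\leq\mathcal{K}\leq\mathcal{F}$; the goal is to show $\rank(\mathcal{A})\leq\rank(\mathcal{K})$. I may assume $\mathcal{K}$ is finitely generated, since otherwise $\rank(\mathcal{K})=\infty$ and the inequality is automatic. I would also record at the outset that $\mathcal{H}$, being compressed, is represented by a finite graph and hence is finitely generated; therefore its free factors $\mathcal{A}$ and $\mathcal{B}$ are finitely generated as well, a point that matters for the cancellation at the end.

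The key step is to pass to the join $\mathcal{K}'=\langle \mathcal{K},\mathcal{B}\rangle\leq\mathcal{F}$. Because $\mathcal{A}\leq\mathcal{K}$ and $\mathcal{B}\leq\mathcal{K}'$, the whole free product satisfies $\mathcal{H}=\mathcal{A}\ast\mathcal{B}\leq\mathcal{K}'$. Invoking the hypothesis that $\mathcal{H}$ is compressed against the overgroup $\mathcal{K}'$ then gives $\rank(\mathcal{H})\leq\rank(\mathcal{K}')$. This is the only place the compression of $\mathcal{H}$ is used, and it is exactly what lets us transfer information from $\mathcal{H}$ down to $\mathcal{A}$.

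To finish I would bound the two sides of $\rank(\mathcal{H})\leq\rank(\mathcal{K}')$ using two elementary facts about ranks in free groups. On the left, since the free product of free groups of ranks $\rank(\mathcal{A})$ and $\rank(\mathcal{B})$ is free of rank $\rank(\mathcal{A})+\rank(\mathcal{B})$, we have $\rank(\mathcal{H})=\rank(\mathcal{A})+\rank(\mathcal{B})$. On the right, rank is subadditive under joins, as a generating set of $\mathcal{K}'$ is obtained by uniting generating sets of $\mathcal{K}$ and $\mathcal{B}$, so $\rank(\mathcal{K}')\leq\rank(\mathcal{K})+\rank(\mathcal{B})$. Chaining these gives $\rank(\mathcal{A})+\rank(\mathcal{B})\leq\rank(\mathcal{K})+\rank(\mathcal{B})$. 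The one subtlety, and the only real obstacle, is that cancelling the term $\rank(\mathcal{B})$ from both sides requires it to be finite; this is guaranteed by the finite generation of $\mathcal{B}$ noted above. After cancellation we obtain $\rank(\mathcal{A})\leq\rank(\mathcal{K})$, as desired.
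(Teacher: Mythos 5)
Your argument is correct, and it is genuinely different from what the paper does: the paper does not prove Lemma~\ref{lem:free factor} at all, but cites it from the literature and instead establishes a graph-theoretic strengthening (Lemma~\ref{lem:compressed subgraph}), whose proof takes a rank-decreasing immersion of a subgraph, glues the target back onto the complement, folds, and thereby produces a rank-decreasing immersion of the whole compressed graph. Your route is purely algebraic: enlarge an overgroup $\mathcal{K}\geq\mathcal{A}$ to $\mathcal{K}'=\langle\mathcal{K},\mathcal{B}\rangle\geq\mathcal{H}$, apply compression of $\mathcal{H}$ there, and then use $\rank(\mathcal{A}\ast\mathcal{B})=\rank(\mathcal{A})+\rank(\mathcal{B})$ together with subadditivity of rank under joins, cancelling the finite quantity $\rank(\mathcal{B})$. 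All the ingredients check out: $\mathcal{H}\leq\mathcal{K}'$ because $\mathcal{H}$ is generated by $\mathcal{A}\cup\mathcal{B}$; rank additivity for free products of free groups is standard; and the finiteness of $\rank(\mathcal{B})$ needed for the cancellation does hold, though the cleanest justification is simply that compression against $\mathcal{K}=\mathcal{F}$ forces $\rank(\mathcal{H})\leq\rank(\mathcal{F})<\infty$ (rather than going through the finite-graph representation, which presupposes finite generation). What each approach buys: yours is short, self-contained, and avoids any graph machinery, but it is tied to the free-factor structure, since the cancellation trick needs a complement $\mathcal{B}$ with $\rank(\mathcal{H})=\rank(\mathcal{A})+\rank(\mathcal{B})$; the paper's gluing-and-folding argument applies to arbitrary disjoint unions of connected subgraphs of a compressed graph, which is the generality actually used later (e.g.\ in the proof of Theorem~\ref{thm:essential distribution}).
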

\noindent For graphs, Lemma~\ref{lem:free factor} is analogous to the statement that connected subgraphs of a compressed graph are compressed. See Lemma~\ref{lem:compressed subgraph} for a strong form of this statement for graphs.
\begin{defn}\label{defn:arc}
An \textit{arc} $A\subset H$ is a connected component of $H-H^*$. The \textit{boundary vertices} of $A$, denoted by $\partial A_1,\ \partial A_2$, are the branching vertices in the closure of $A$.
\end{defn}
\noindent Let $\ell:H\rightarrow B$ be an immersion, and $e$ be an edge in an arc $A$ such that $$\left\{\ell\left(e\right)\right\}\cap \ell\left(H-A\right)=\emptyset.$$ 
This means that the label of $e$ does not appear in $H-A$, although it may appear more than once in $A$. We can then construct a new graph $\bar{H}$ by 
replacing $A$ by $e$:
 $$\bar{H}=\left(H-A\right)\sqcup \left\{e\right\},$$
 where we attach $o\left(e\right)$ to $\partial A_1$, and $\tau\left(e\right)$ to $\partial A_2$. Note that $\bar{H}\rightarrow B$ is an immersion since $\ell\left(e\right)$ does not appear elsewhere. Such an edge $e$ is a \textit{placeholder}. The graph $\bar{H}$ is called a $\left(A\searrow e\right)$-\textit{deflation} of $H$. Likewise, the graph $H$ obtained from $\bar{H}$ by replacing an edge $e$ with an arc $A$ is a $\left(e\nearrow A\right)$-\textit{inflation} of $\bar{H}$. See Figure~\ref{fig:graph_7}. In this setting, we have the following lemma:
 \begin{figure}[t]\centering
\includegraphics[width=.8\textwidth]{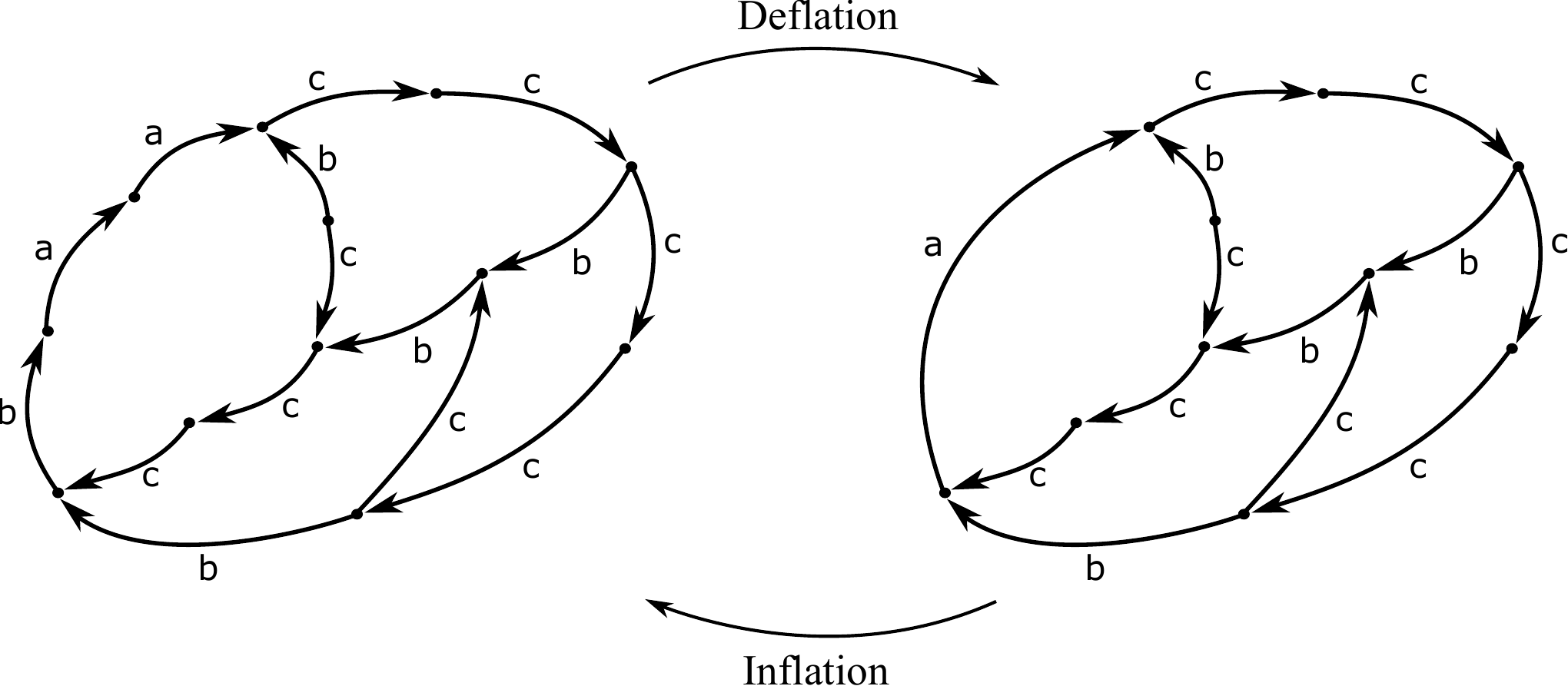}
\caption[Inflation-Deflation of graphs ]{\label{fig:graph_7}
Inflation-Deflation of a graph.}
\end{figure}
\begin{lem}[Placeholder]\label{lem:placeholder}
Let $\ell:H\rightarrow B$ be an immersion, $e$ be an edge in an arc $A\subset H$ such that $\left\{\ell\left(e\right)\right\}\cap \ell\left(H-A\right)=\emptyset$.
Let $\bar{H}$ be the $\left(A\searrow e\right)$-\textit{deflation} of $H$.
Then $\rank\left(\bar{H}\right)=\rank\left(H\right)$, and if $H$ is compressed so is $\bar{H}$.
\end{lem}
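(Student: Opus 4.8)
The plan is to prove the two assertions separately. The equality $\rank(\bar H)=\rank(H)$ is a direct Euler-characteristic count and does not even use that $\ell$ is an immersion. An arc is a component of $H-H^*$, hence a subdivided segment whose interior vertices all have degree $2$ and whose boundary vertices $\partial A_1,\partial A_2$ are branching and survive in $\bar H$. If $A$ carries $k$ edges it has $k-1$ interior vertices, so forming $\bar H=(H-A)\sqcup\{e\}$ removes $k$ edges and $k-1$ vertices while adding one edge and no vertex; thus $|\bar H^0|=|H^0|-(k-1)$ and $|\bar H^1|=|H^1|-k+1$, giving $\chi(\bar H)=\chi(H)$. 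As $\bar H$ is again connected and $\rank=1-\chi$ for connected graphs, $\rank(\bar H)=\rank(H)$.

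For the second assertion I would argue the contrapositive, turning a failure of compressedness for $\bar H$ into one for $H$. Suppose $\bar H$ is not compressed; by the remark after Definition~\ref{defn:compressed subgroup} there is a surjective immersion $g:\bar H\twoheadrightarrow \bar K$ into an immersed (and, without loss, connected) graph $\bar K\to B$ with $\rank(\bar H)>\rank(\bar K)$. Since $a:=\ell(e)$ occurs in $\bar H$ only on $e$ and $g$ is label-preserving and surjective, every $a$-edge of $\bar K$ is $g(e)$, so $\bar e:=g(e)$ is the unique $a$-edge of $\bar K$ and is therefore itself a placeholder edge of $\bar K$. I would then inflate $\bar K$ along $\bar e$ by a fresh copy of $A$, producing a labelled graph $K_0=(\bar K-\bar e)\sqcup A$ over $B$; by the count of the first paragraph (with $\bar K$ the deflation of $K_0$) we have $\rank(K_0)=\rank(\bar K)$. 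There is an obvious surjective, label-preserving morphism $\phi_0:H\to K_0$ given by $g$ on $H-A=\bar H-e$ and by the identity on the inserted copy of $A$. Heuristically this is the statement that the deflation realizes the substitution $a\mapsto w$, with $w=\ell(A)$, on the level of subgroups of $\mathcal F$.

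The main obstacle is that $K_0\to B$ need not be an immersion: the first or last edge of the inserted arc may repeat a label already present at $g(\partial A_i)$, contributed by another vertex of $\bar H$ lying in the same $g$-fibre. To repair this I would fold $K_0$ to an immersion $K_0\twoheadrightarrow K_1$ with $K_1\to B$ an immersion; by Lemma~\ref{lem:folding} foldings are $\pi_1$-surjective, so $\rank(K_1)\le\rank(K_0)=\rank(\bar K)$. The key point that keeps this harmless is that the composite $H\to K_0\to K_1$ remains label-preserving, and any label-preserving morphism out of a graph that immerses in $B$ is automatically locally injective: two distinct edges of $\link(v)$ with a common image in $K_1$ would have the same label under $\ell$, contradicting local injectivity of $\ell:H\to B$. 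Hence $H\to K_1$ is a surjective immersion into an immersed graph with $\rank(K_1)\le\rank(\bar K)<\rank(\bar H)=\rank(H)$, contradicting compressedness of $H$. The genuinely delicate step is precisely this automatic-immersion observation, since it is what allows the folding that fixes $K_0$ not to spoil the immersion coming from $H$; the remaining points (well-definedness of $\phi_0$, connectedness, and the uniqueness of $\bar e$) are routine.
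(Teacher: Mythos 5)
Your proof is correct and follows essentially the same route as the paper's: a direct Euler-characteristic count for the rank equality, then the contrapositive via inflating $\bar{K}$ along $g(e)$ by $A$, folding to restore the immersion to $B$, and checking that the induced map $H\rightarrow K_1$ is an immersion. In fact you make explicit the step the paper leaves as ``one checks'' --- that a label-preserving morphism out of a graph immersed in $B$ is automatically an immersion --- which is the right justification.
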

\begin{proof}
Since there is a degree preserving bijection $\bar{H}^*\rightarrow H^*$, we have $\chi\left(\bar{H}\right)=\chi\left(H\right)$ by Theorem~\ref{thm:GBT}. Therefore, $\rank\left(\bar{H}\right)=\rank\left(H\right)$.\\
Suppose $\bar{H}$ is not compressed. Then there exists an immersion $\bar{g}: \bar{H} \rightarrow \bar{K}$ with $\rank\left(\bar{H}\right)>\rank\left(\bar{K}\right)$. Let $\bar{K}'$ be the $\left(\bar{g}\left(e\right)\nearrow A\right)$-\textit{inflation} of $\bar{K}$, and $\bar{K}'\xrightarrow{F}K$ be the composition of all possible foldings. By Lemma~\ref{lem:folding}, foldings are $\pi_1$-surjective, so we have
$$\rank\left(H\right)=\rank\left(\bar{H}\right)>\rank\left(\bar{K}\right)=\rank\left(\bar{K}'\right)\geq \rank\left(K\right)$$
One checks that the map $g: H\rightarrow K$ defined as $g|_{H-A}=F\circ \bar{g}$ and $g|_A=F\circ \id_A$ is an immersion, contradicting the compressibility of $H$. 
\end{proof}
\begin{lem}[Generalized Placeholder]\label{cor:generalized}
Let $H$ be a compressed graph of rank $>1$. Let $e_1,\ldots,e_m \in H^1$ be edges in the arcs $A_1\ldots,A_m$ respectively, such that
\begin{enumerate}
\item $\ell\left(e_i\right)\neq \ell\left(e_j\right)$ whenever $i\neq j$.
\item $\left\{\ell\left(e_i\right)\right\}\cap \ell\left(H-\left\{A_1,\ldots,A_m\right\}\right)=\emptyset$ for all $1\leq i\leq m$.
\end{enumerate}
Let $\bar{H}$ be the $\left(A_i\searrow e_i\right)_{i=1}^{m}$-deflation of $H$, that is $\bar{H}$ is the graph obtained by replacing arcs $A_i$ with edges $e_i$. Then $\bar{H}\rightarrow B$ is an immersion, $\bar{H}$ is compressed, and $\rank\left(\bar{H}\right)=\rank\left(H\right)$.
\end{lem}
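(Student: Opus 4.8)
The plan is to run the proof of the single-arc Placeholder Lemma (Lemma~\ref{lem:placeholder}), but with all $m$ arcs deflated simultaneously. Before doing so I would point out why a naive induction does \emph{not} reduce this to Lemma~\ref{lem:placeholder}: to deflate one arc $A_i$ on its own, that lemma demands $\{\ell(e_i)\}\cap\ell(H-A_i)=\emptyset$, whereas hypothesis (2) only forbids $\ell(e_i)$ \emph{outside} the union $A_1\cup\cdots\cup A_m$. Thus $\ell(e_i)$ may occur inside another special arc $A_j$, and no single arc can legitimately be peeled off first. So I treat the three assertions (immersion, rank, compressibility) directly for the simultaneous deflation, exactly as in Lemma~\ref{lem:placeholder} but tracking the extra bookkeeping that (1) and (2) provide.

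The immersion and rank claims are the routine ones. Since the $A_i$ are distinct components of $H-H^*$ they are disjoint, so $\bar H$ is well defined, and I only need local injectivity of $\ell$ at each branching vertex $v\in\bar H^*$. The edges at $v$ are either placeholders $e_i$ (for deflated arcs meeting $v$) or first edges of the undeflated part: two placeholders are distinguished by (1), a placeholder and an edge of $H-\{A_1,\ldots,A_m\}$ by (2), and two undeflated edges because $\ell$ already immersed $H$. Hence $\bar H\to B$ is an immersion. For the rank, exactly as in Lemma~\ref{lem:placeholder} the deflation deletes only the degree-$2$ interior vertices of the arcs and induces a degree-preserving bijection $\bar H^*\to H^*$, so $\chi(\bar H)=\chi(H)$ by the Combinatorial Gauss--Bonnet Theorem (Theorem~\ref{thm:GBT}), giving $\rank(\bar H)=\rank(H)$.

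The substance is in compressibility. Suppose $\bar H$ is not compressed, so there is a surjective immersion $\bar g:\bar H\twoheadrightarrow\bar K$ with $\rank(\bar H)>\rank(\bar K)$. Because $\bar g$ is label-preserving and the labels $\ell(e_i)$ are pairwise distinct by (1), the edges $\bar g(e_1),\ldots,\bar g(e_m)$ are pairwise distinct, so I may inflate each $\bar g(e_i)$ back to a copy $A_i'$ of $A_i$, producing $\bar K'$; inflation is a subdivision, so $\rank(\bar K')=\rank(\bar K)$. Folding $\bar K'\xrightarrow{F}K$ is $\pi_1$-surjective by Lemma~\ref{lem:folding}, whence $\rank(\bar K')\ge\rank(K)$, and chaining gives $\rank(H)=\rank(\bar H)>\rank(\bar K)=\rank(\bar K')\ge\rank(K)$. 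I then define $g:H\to K$ by $g|_{H-\bigcup A_i}=F\circ\bar g$ and $g|_{A_i}=F\circ(\text{inclusion of }A_i\text{ as }A_i'\subset\bar K')$; these agree on the shared boundary vertices since $\bar g$ sends $\partial A_i$ to the endpoints of $\bar g(e_i)$, i.e.\ of $A_i'$.

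The decisive point, and the main obstacle, is checking that $g$ is a genuine immersion. First one verifies $g$ is a well-defined label-preserving morphism: if some edge $f$ of $\bar H-\{e_1,\ldots,e_m\}=H-\bigcup A_i$ had $\bar g(f)=\bar g(e_i)$, then $\ell(f)=\ell(e_i)$, contradicting (2); hence $\bar g$ carries $H-\bigcup A_i$ into the common part of $\bar K$ and $\bar K'$, and the two pieces of $g$ are consistent. Then, since $g$ is label-preserving, the labelling of $H$ factors as $\ell=\ell_K\circ g$, and because $\ell$ is an immersion the induced map $g$ is injective on each link, so $g$ is an immersion. This produces an immersion $H\to K$ with $\rank(H)>\rank(K)$, contradicting compressibility of $H$. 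The whole argument rests on the fact that the placeholder labels cannot be reused elsewhere by $\bar g$, which is precisely what (1) and (2) guarantee and what lets the simultaneous deflation succeed where the one-arc-at-a-time induction breaks down.
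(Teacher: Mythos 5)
Your proof is correct and follows exactly the route the paper intends: the paper gives no written proof here beyond the remark that it is ``similar to that of Lemma~\ref{lem:placeholder}'', and your argument is precisely that single-arc deflation/inflation/folding argument carried out simultaneously for all $m$ arcs, with hypotheses (1) and (2) doing the bookkeeping you describe. Your observation that a one-arc-at-a-time induction does not literally satisfy the hypotheses of Lemma~\ref{lem:placeholder} is a worthwhile addition, but it does not change the approach.
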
 
\noindent The set $\left\{e_1,\ldots,e_n\right\}$ is called a \textit{placeholder set}.
The proof is similar to that of Lemma~\ref{lem:placeholder}. \\ 

We now describe a combinatorial lemma. Let $\mathcal{M}\twoheadrightarrow \mathcal{U}$ be a function describing the storage of a finite set of marbles $\mathcal{M}$ in $k$ urns, $\mathcal{U}=\left\{U_1,\ldots,U_k\right\}$, such that no urn is empty. Let $\mathcal{C}=\left\{C_1,\ldots,C_m\right\}$ be a set of $m$ distinct colours with $m\leq k$, and $c:\mathcal{M}\twoheadrightarrow \mathcal{C}$ be the colouring map. A \textit{transversal} $\mathcal{T}$ is the image of a section of $\mathcal{M}\rightarrow \mathcal{U}$ where the composition $\mathcal{U}\rightarrow\mathcal{M}\rightarrow \mathcal{U}$ is $\id_\mathcal{U}$. In other words, $\mathcal{T}$ is a set of marbles constructed by choosing exactly one marble from each urn. Each chosen marble is a \textit{representative} of the urn from which it was chosen. So $\left|\mathcal{T}\right|=k$. We are interested in the cardinality of $c\left(\mathcal{T}\right)$. Suppose that all transversals are such that $\left|c\left(\mathcal{T}\right)\right|<m$. Then by finiteness, we can choose $\mathcal{T}$ with $\left|c\left(\mathcal{T}\right)\right|=s<m$ where $s$ is the maximal number of colours that can possibly appear in a transversal. Let $\mathcal{T}=\left\{M_i\mid M_i\ \text{is a representative of the urn}\ U_i,\ 1\leq i\leq k\right\}$. There are $m-s>0$ colours missing from $\mathcal{T}$. We let $X$ be the set of these missing colours and call its elements \textit{$x$-colours}. The $x$-coloured marbles must appear in at least one urn, since $c:\mathcal{M}\twoheadrightarrow \mathcal{C}$ is surjective. We let $Y$ be the set of the colours of representatives of urns containing $x$-coloured marbles. Elements of $Y$ are \textit{$y$-colours}. Note that $\left|X\right|\geq 1$ and $\left|Y\right|\geq 1$. See Figure~\ref{fig:graph_14}. In this setting we have the following lemma:
\begin{lem}\label{lem:urn}
There exists a subset $\mathcal{T}'\subset\mathcal{T}$ such that
\begin{enumerate}
\item For any marble $M\in \mathcal{T}',\ c\left(M\right)\notin c\left(U_i\right)$ where $U_i$ is any urn represented by $M_i\in \left(\mathcal{T}-\mathcal{T}'\right)$, and
\item $1\leq \left|\mathcal{T}'\right|\leq \left|\mathcal{T}\right|-2$.
\end{enumerate}
\end{lem}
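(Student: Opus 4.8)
The plan is to exhibit $\mathcal{T}'$ explicitly as the set of representatives of a suitable forward-closed family of urns, and to extract the size bound from the maximality of $s$.

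First I would record the ``influence'' between urns as a digraph $D$ on $\mathcal{U}$: put an arc $U_i\to U_j$ (with $i\neq j$) whenever the colour $c(M_i)$ of the representative of $U_i$ occurs somewhere in $U_j$, i.e.\ $c(M_i)\in c(U_j)$. Let $\mathcal{X}\subseteq\mathcal{U}$ be the nonempty set of urns containing an $x$-coloured marble, and let $U^\ast$ be the forward closure of $\mathcal{X}$ under $D$ (all urns reachable from $\mathcal{X}$ by directed paths). I then set $\mathcal{T}'=\{M_i : U_i\in U^\ast\}$. Condition (1) is immediate: if $M_i\in\mathcal{T}'$ and $M_j\in\mathcal{T}-\mathcal{T}'$ had $c(M_i)\in c(U_j)$, then $U_i\to U_j$ would be an arc of $D$ leaving the forward-closed set $U^\ast$, forcing $U_j\in U^\ast$, a contradiction. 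The lower bound $|\mathcal{T}'|\geq 1$ is also immediate, since $U^\ast\supseteq\mathcal{X}\neq\emptyset$.

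The substance is the upper bound $|\mathcal{T}'|=|U^\ast|\le k-2$, and this is where maximality enters. The key claim is that the representatives of the urns in $U^\ast$ have pairwise distinct colours, none of which lies in $X$. The non-$x$ part is free, since $x$-colours never occur as representative colours. For distinctness I would argue by contradiction with a shortest-chain exchange: if a colour $\gamma$ were the representative colour of two urns of $U^\ast$, choose among all urns of $U^\ast$ with representative colour $\gamma$ one, say $U_{j_t}$, admitting a shortest directed chain $U_{j_0}\to U_{j_1}\to\cdots\to U_{j_t}$ from an urn $U_{j_0}\in\mathcal{X}$. Minimality forces the chain to be simple and $\gamma$ not to be the representative colour of any $U_{j_0},\dots,U_{j_{t-1}}$. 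Now shift representatives along the chain: re-represent $U_{j_0}$ by its $x$-coloured marble (colour $x\in X$), and for $1\le r\le t$ re-represent $U_{j_r}$ by a marble of colour $c(M_{j_{r-1}})$ (available since $U_{j_{r-1}}\to U_{j_r}$). The shift reinstates every chain colour except $\gamma=c(M_{j_t})$, and $\gamma$ still survives on the off-chain urn of $U^\ast$ that also represented it; thus the new transversal retains all $s$ previous colours and additionally acquires the fresh colour $x$, realising $s+1$ colours and contradicting the maximality of $s$.

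With the claim in hand I would finish by counting. Let $C^\ast=X\sqcup\{c(M_i):U_i\in U^\ast\}$; the claim gives $|C^\ast|=|X|+|U^\ast|$. Every colour of $C^\ast$ occurs only inside urns of $U^\ast$ ($x$-colours only in $\mathcal{X}\subseteq U^\ast$, representative colours only in $U^\ast$ by forward-closedness). If $C^\ast$ were all of $\mathcal{C}$, then, since every urn is nonempty, every urn would meet $C^\ast$ and hence lie in $U^\ast$, giving $|U^\ast|=k$; but $|U^\ast|=|C^\ast|-|X|=m-|X|\le m-1\le k-1$, a contradiction. Hence $C^\ast\neq\mathcal{C}$, so $|C^\ast|\le m-1$ and therefore $|U^\ast|=|C^\ast|-|X|\le (m-1)-|X|\le m-2\le k-2$, using $|X|\ge 1$ and $m\le k$. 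The only real obstacle is the distinctness claim, i.e.\ checking that the shortest-chain shift genuinely produces a transversal realising one more colour; everything else is bookkeeping.
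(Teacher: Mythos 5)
Your argument is correct, and it reaches the lemma by a genuinely different route from the paper's. The paper exploits $k>s$ to find two urns $U_j,U_{j'}$ whose representatives share a colour, shows by one-step and two-step exchanges that neither can contain an $x$-coloured or $y$-coloured marble, and takes $\mathcal{T}'$ to be the set of representatives of urns containing a $y$-coloured marble; the two excluded urns give the bound $|\mathcal{T}'|\leq k-2$. You instead pass to the full forward closure $U^\ast$ of $\mathcal{X}$ under the influence digraph, which is in effect the transitive completion of the paper's one-step set $Y$: condition (1) then holds tautologically by forward-closedness, and all the work is concentrated in the distinctness claim, which you establish by a $t$-step chain exchange generalizing the paper's two-step swap. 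That exchange is sound: the chain may be taken simple, each colour $c(M_{j_{r-1}})$ is reinstated one urn further along, $\gamma$ survives on the second $\gamma$-represented urn (which is off the chain because minimality of $t$ forbids any interior chain urn from representing $\gamma$), and the freshly acquired colour lies in $X$, so the new transversal realises $s+1$ colours and contradicts maximality; the case $t=0$ is covered by the same reasoning. Your counting step then gives the slightly sharper bound $|\mathcal{T}'|\leq m-2\leq k-2$. What your route buys is that condition (1) needs no separate verification, whereas with the paper's single-iteration set $Y$ one must still check that the colour of a representative of a $Y$-urn cannot occur in a non-$Y$-urn, a closure property that your construction builds in from the start; what it costs is the longer chain argument in place of the paper's two short swaps.
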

\begin{figure}[t]\centering
\includegraphics[width=.4\textwidth]{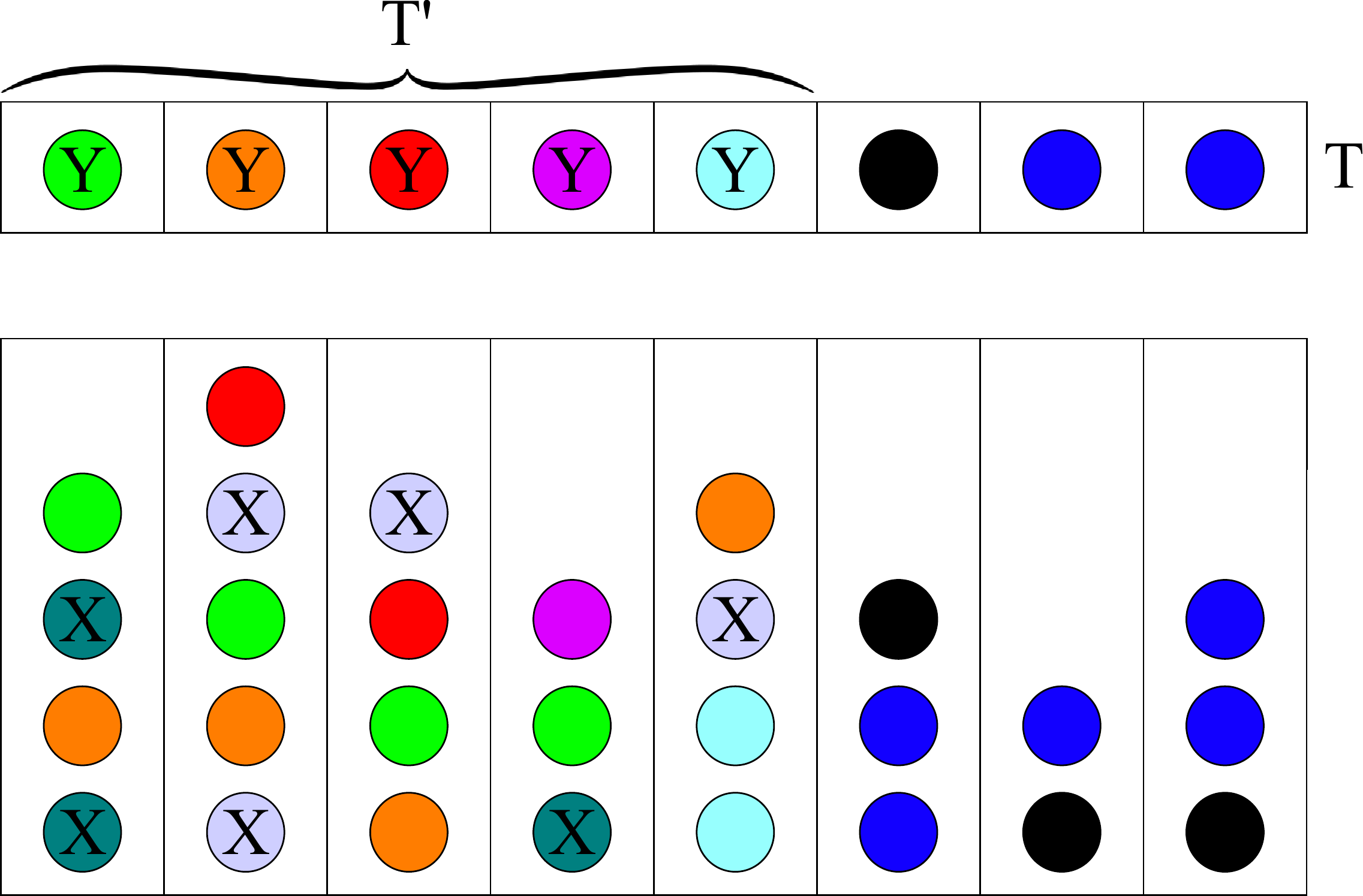}
\caption[Urn Lemma]{\label{fig:graph_14}
The urns and their transversal.}
\end{figure}
\begin{proof}
Since we have $k-s\geq m-s\geq 1$ there is a least one urn $U_j$ whose representative $M_j$ has a colour that appears elsewhere in $\mathcal{T}$, ie $c\left(M_j\right)\in c\left(\mathcal{T}-\left\{M_j\right\}\right).$ Let $U_{j'}$ be the urn such that $c\left(M_j\right)=c\left(M_{j'}\right)$, with $M_j, M_{j'}\in \mathcal{T}$. Then $U_j$ and $U_{j'}$ contain no $x$-coloured marbles and no $y$-coloured marbles. Indeed, if there exists $M\in U_j$ such that $c\left(M\right)\in X$, then we can replace $M_j$ by $M$ as the representative of $U_j$ and obtain a new transversal that contains $s+1$ colours, contradicting the maximality assumption. Similarly, if there exists $M\in U_j$ such that $c\left(M\right)\in Y$, then letting $M_i$ be the $y$-coloured representative such that $c\left(M\right)=c\left(M_i\right)$, we can construct a new transversal containing $s+1$ colours by replacing $M_j$ with $M$ as the representative of $U_j$, and by replacing $M_i$ by an $x$-coloured marble in $U_i$. This again contradicts the maximality assumption. The same is true for $U_{j'}$.\\
Let $\mathcal{T}'=\left\{M_i\in \mathcal{T}\mid \exists\ M\in U_i\ \text{with}\  c\left(M\right)\in Y\right\}$. Then $\mathcal{T}'\neq \emptyset$ and $U_j,\ U_{j'}\notin \mathcal{T}'$. Hence $\mathcal{T}'$ is the desired set.
\end{proof}
\noindent Let $H\twoheadrightarrow B$ be a compressed immersed core graph of rank $m>1$, and let $\mathcal{A}=\left\{A_1,\ldots,A_k\right\}$ be its collection of arcs. Note that $m\leq k$, with equality holding if and only if $H$ is an immersed (subdivided) bouquet of circles. Each edge belongs to a unique arc. We define an equivalence relation $\sim$ on the edge set $H^1$ by $$e\sim e' \iff \exists A\subset \mathcal{A}\ \text{such that}\ \left\{e,e'\right\}\subset A$$ 
Let $\mathcal{T}=\left\{e_1,\ldots,e_k\mid e_i\in A_i\right\}$ be a transversal of this equivalence relation, where each $e_i$ is an edge representing the arc  $A_i$. Then we have the following proposition:


\begin{prop}\label{prop:label distribution}
If $\ell:H\rightarrow B$ is an immersed compressed graph of rank $m$, then it admits a transversal $\mathcal{T}$ such that $\left|\ell\left(\mathcal{T}\right)\right|\geq m$.
\end{prop}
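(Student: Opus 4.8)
The plan is to argue by contradiction, using Lemma~\ref{lem:urn} as the combinatorial engine and Lemma~\ref{cor:generalized} to convert its output into a rank-preserving simplification of $H$ that ends up reading too few labels. The only place compressedness enters is through one inequality, which I would record first: if $H$ reads $d$ distinct labels, then $\mathcal H=\pi_1 H$ lies in the free factor $\mathcal K\leq \mathcal F$ generated by those $d$ letters, and $\rank\mathcal K=d$; since $\mathcal H$ is compressed, $m=\rank\mathcal H\leq \rank\mathcal K=d$. Thus every compressed graph reads at least as many labels as its rank. The contradiction will be that a suitable deflation of $H$ violates this.

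Suppose, for contradiction, that no transversal attains $m$ distinct labels, and choose $\mathcal T$ realizing the maximal number $s<m\leq d$ of labels. I would then feed the data (marbles $=$ edges, urns $=$ arcs, colours $=$ the $d$ labels) into Lemma~\ref{lem:urn}: its hypothesis that every transversal sees fewer than $d$ colours holds because even the best sees only $s<d$. This produces $\mathcal T'\subseteq \mathcal T$ with $1\leq|\mathcal T'|\leq k-2$ whose labels appear in no arc outside those represented by $\mathcal T'$. Tracing the $x$-colours in the proof of Lemma~\ref{lem:urn}, each of the $d-s\geq 1$ labels missing from $\mathcal T$ occurs only inside the arcs represented by $\mathcal T'$: indeed any urn containing a missing colour has its representative colour in $Y$, hence lies in $\mathcal T'$. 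So the $\mathcal T'$-arcs carry all the missing labels, while their representative labels stay confined to them.

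Finally I would deflate the arcs represented by $\mathcal T'$ back to their representatives via Lemma~\ref{cor:generalized}. The deflation keeps only one label per deflated arc, namely the representative's, whose colour lies in $\mathcal T$ and so is not one of the missing ones; since the missing labels live only in $\mathcal T'$-arcs, all of them are destroyed. By Lemma~\ref{cor:generalized} the result $\bar H$ is again compressed with $\rank\bar H=\rank H=m$, yet it now reads at most $s$ labels. Applying the opening observation to $\bar H$ gives $m\leq s$, contradicting $s<m$ and proving the proposition.

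The main obstacle is verifying the hypotheses of Lemma~\ref{cor:generalized} for this simultaneous deflation: I must choose the representatives of the $\mathcal T'$-arcs with pairwise distinct labels that are confined to those arcs, i.e. satisfying condition (2). The confinement from Lemma~\ref{lem:urn} secures condition (2) for the arcs collectively, but distinctness, condition (1), requires re-selecting representatives inside the $\mathcal T'$-arcs, which is itself a small transversal problem and the one genuinely delicate point. A secondary bookkeeping point is to keep the colour count from exceeding the number of urns when invoking Lemma~\ref{lem:urn}, which one arranges by a preliminary single-edge deflation. As a sanity check, the statement is equivalent, via the defect form of Hall's theorem, to the deficiency bound $|S|-|\ell(\bigcup S)|\leq |H^{*}|-1$ for every set of arcs $S$, and this follows directly from the fact that the subgraph spanned by $S$ is compressed and therefore has rank at most $|\ell(\bigcup S)|$.
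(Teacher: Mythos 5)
Your proposal is correct and follows essentially the same route as the paper's proof: fix a transversal maximizing $\left|\ell\left(\mathcal{T}\right)\right|$, invoke Lemma~\ref{lem:urn} to extract the sub-transversal $\mathcal{T}'$, deflate the corresponding arcs via Lemma~\ref{cor:generalized}, and contradict the opening observation that a compressed graph of rank $m$ must read at least $m$ distinct labels. The one delicate point you flag --- arranging pairwise distinct labels on the placeholder set so that condition (1) of Lemma~\ref{cor:generalized} is satisfied --- is passed over silently in the paper's own proof as well, so noting it is to your credit rather than a departure from the argument.
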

\begin{proof}
We first remark that since $H$ is compressed and has rank $m$, then we necessarily have $m\leq n$, because otherwise $H$ is immersed in a smaller rank bouquet of circles which is a contradiction. Moreover, $H$ must contain at least $m$ edges each one having a different label, for otherwise once again, $H$ would immerse in a smaller rank bouquet of circles.\\
We need to show that there exists a transversal $\mathcal{T}$ such that $\left|\ell\left(\mathcal{T}\right)\right|\geq m$. Since we have only finitely many possible transversals, we can choose one whose image under $\ell$ is maximal. Let it be $\mathcal{T}$ and let $s=\left|\ell\left(\mathcal{T}\right)\right|$. So  for any transversal $\mathcal{T}'$ we have $\left|\ell\left(\mathcal{T}'\right)\right|\leq s$.\\ 
Suppose $s<m$. By Lemma~\ref{lem:urn}, there exists a subset $\mathcal{T}'\subset\mathcal{T}$ such that for any edge $e\in \mathcal{T}',\ \ell\left(e\right)\notin \ell\left(A_i\right)$ for all $A_i$ represented by $e_i\in \left(\mathcal{T}-\mathcal{T}'\right)$. Then $\mathcal{T}'$ is a placeholder set. Let $\bar{H}$ be the corresponding $\left(A_j\searrow e_j\right)_{e_j\in \mathcal{T}'}$ deflation of $H$ as constructed in Lemma~\ref{cor:generalized}. Then $\bar{H}$ is compressed. However, this is a contradiction since $\left|\ell\left(\bar{H}^1\right)\right|=s<m$, which means $\bar{H}$ immerses in a smaller rank bouquet of circles.
\end{proof}
\noindent Proposition~\ref{prop:label distribution} shows that compressed labelled graphs have constraints on how their labelled edges are distributed in the graph. In particular, it shows that in some sense, \emph{too many} labels cannot be concentrated in \textit{too few} arcs. We will say more about the distributions of labelled edges of compressed graphs in the next section.
\subsection{Compressed Disconnected Graphs}
\noindent We now generalize the notion of compressibility to disconnected finite graphs.
\begin{defn}
A finite graph $H$ is \textit{compressed} if for any immersion of graphs $H\rightarrow K$ we have $\mrank\left(H\right)\leq\mrank\left(K\right)$.
\end{defn}
\noindent When $H$ is connected this definition coincides with Definition~\ref{defn:compressed subgroup}.
\begin{lem}
Let $H=\displaystyle\bigsqcup_{i=1}^nH_i$ where each $H_i$ is connected. If $H$ is compressed then $H_i$ is compressed for all $1\leq i\leq n$.
\end{lem}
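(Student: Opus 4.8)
The plan is to argue by contraposition: I will show that if some component $H_i$ fails to be compressed, then $H$ itself fails to be compressed. After relabelling, suppose $H_1$ is not compressed. Reading compressibility through the reduced-rank definition just given (which, for the connected graph $H_1$, coincides with Definition~\ref{defn:compressed subgroup}), the failure of compressibility produces an immersion $\phi_1 \colon H_1 \to K_1$ into an immersed graph $K_1 \to B$ with $\mrank\left(H_1\right) > \mrank\left(K_1\right)$. Note that $K_1$ need not be connected; a single strict inequality is all I will use.

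I would then assemble a witness for $H$ itself. Define
\[
K = K_1 \sqcup \bigsqcup_{i=2}^{n} H_i, \qquad \phi = \phi_1 \sqcup \bigsqcup_{i=2}^{n} \id_{H_i} \colon H \longrightarrow K,
\]
where $K$ is labelled by $\ell_{K_1}$ on the first summand and by the original labellings of $H_2,\dots,H_n$ on the remaining ones. Local injectivity and label-preservation are conditions on the link of each vertex, and each such link lies entirely in one component; thus both properties may be verified one summand at a time. On $H_1$ they hold because $\phi_1$ is a label-preserving immersion, and on each $H_i$ with $i \geq 2$ they hold because $\id_{H_i}$ trivially is. Hence $\phi$ is a label-preserving immersion of $H$ into $K$.

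Finally I would compare reduced ranks. Since $\mrank$ is by definition the sum of $\max\{0,-\chi\}$ over connected components, it is additive under disjoint union; as the components of $K$ are those of $K_1$ together with $H_2,\dots,H_n$, this yields
\[
\mrank\left(H\right) - \mrank\left(K\right) = \sum_{i=1}^{n}\mrank\left(H_i\right) - \mrank\left(K_1\right) - \sum_{i=2}^{n}\mrank\left(H_i\right) = \mrank\left(H_1\right) - \mrank\left(K_1\right) > 0.
\]
So $\phi$ strictly decreases reduced rank, contradicting the compressibility of $H$, and the lemma follows. There is no serious obstacle here: the construction is routine, and the only things to keep straight are that an immersion out of a disjoint union is tested component by component and that reduced rank adds over components. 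The one point worth a moment's thought is that the witnessing inequality for $H_1$ must be strict in the \emph{reduced} rank, which is exactly guaranteed by reading $H_1$'s compressibility through the reduced-rank definition (consistent, via the noted equivalence, with the rank-based one for connected graphs).
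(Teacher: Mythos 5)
Your proof is correct and follows essentially the same route as the paper: both take the witnessing immersion $\phi_1\colon H_1\to K_1$ from the failure of compressibility of one component, extend it by the identity on the remaining components to an immersion $H\to K_1\sqcup\bigsqcup_{i\geq 2}H_i$, and conclude by additivity of $\mrank$ over components. Your added care about reading the witness through the reduced-rank definition (so the strict inequality survives) is a point the paper glosses over, but it does not change the argument.
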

\begin{proof}
Suppose $H_j$ is not compressed for some index $j$. Then there exists an immersion of connected graphs 
$\phi:H_j\rightarrow K$ such that $\rank\left(H_j\right)> \rank\left(K\right)$. Consider the graph $\Gamma=K\sqcup \displaystyle\bigsqcup_{i\neq j}H_i$ and the map $\Phi: H\rightarrow \Gamma$ defined as follows: $\Phi_{|H_{j}}=\phi$ and $\Phi_{|H_{i\neq j}}=\id_{H_{i\neq j}}$. One checks that $\Phi$ is an immersion and $\mrank\left(H\right)> \mrank\left(\Gamma\right)$ contradicting the compressibility of $H$.
\end{proof}
\noindent The converse of this lemma is not true. For example if $H_1$ is compressed of rank $m\geq 2$, then the graph $H=H_1\sqcup H_1$ (a disjoint union of two copies of $H_1$) is not compressed since $H$ immerses onto $H_1$. However, as Lemma~\ref{lem:compressed subgraph} shows, the disjoint union of graphs $H_i$ which are all subgraphs of a compressed graph $H$, is compressed. In particular, it shows that connected subgraphs of a compressed graph are compressed, thus providing another proof to Lemma~\ref{lem:free factor}

\begin{lem}\label{lem:compressed subgraph}
Let $H$ be a compressed connected graph, and $H'=\displaystyle\bigsqcup_{i=1}^nH_i$ be a union of disjoint connected subgraphs $H_i\subset H$. Then $H'$ is compressed. In particular, connected subgraphs of $H$ are compressed.
\end{lem}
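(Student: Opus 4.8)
The plan is to reduce the disconnected statement to the connected case and then prove the connected case, which is the genuine content. First I would observe that it suffices to handle a single connected subgraph: if I can show that any connected subgraph $H_i \subset H$ is compressed, then the disjoint union $H' = \bigsqcup_{i=1}^n H_i$ is compressed too. Indeed, given an immersion $H' \to K$, each restriction $H_i \to K$ lands in some component of $K$, and since $\mrank$ is additive over components (it is $\sum_i \max\{0, -\chi(H_i)\}$), summing the inequalities $\mrank(H_i) \le \mrank(\text{image component})$ over $i$ yields $\mrank(H') \le \mrank(K)$, provided distinct $H_i$ mapping to the same component of $K$ do not overcount. This last point needs care, so I would instead argue that it is enough to prove the case $n=1$, i.e.\ that a single connected subgraph $H_i$ is compressed, and then appeal to the fact that the $H_i$ are subgraphs of one fixed compressed graph to run a combined argument.

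For the heart of the matter, the connected case, I would argue by contradiction. Suppose a connected subgraph $H_0 \subset H$ is not compressed, so there is a surjective immersion $g: H_0 \twoheadrightarrow K$ with $\rank(K) < \rank(H_0)$. The strategy is to extend $g$ to an immersion defined on all of $H$ that strictly drops reduced rank, contradicting the compressibility of $H$. The difficulty is that $H_0$ sits inside $H$ sharing boundary vertices with the rest of $H$, so I cannot simply glue $K$ back in. The natural device is the placeholder machinery of Lemma~\ref{lem:placeholder} and Lemma~\ref{cor:generalized}: I would replace the arcs of $H$ lying outside $H_0$ by placeholder edges carrying fresh labels that do not appear in $H_0$, or more precisely realize the ambient graph $H$ as an inflation of a graph in which $H_0$'s complement has been collapsed to placeholder arcs. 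Then the immersion $g$ on $H_0$ together with the identity (followed by foldings) on the placeholder part defines an immersion $H \to K'$ with $\mrank(K') \le \mrank(H) - 1$, exactly as in the inflation/deflation argument proving Lemma~\ref{lem:placeholder}.

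The main obstacle I anticipate is controlling the interaction at the boundary vertices $\partial A$ where $H_0$ meets the rest of $H$: after applying $g$, the images of these shared vertices may force foldings that either identify parts of $K$ with the ambient graph (which is fine, as foldings only decrease rank by Lemma~\ref{lem:folding}) or destroy local injectivity. I would handle this by choosing the placeholder labels for the complementary arcs to be entirely disjoint from the labels occurring in $K$, so that no folding can occur between the $K$-part and the placeholder-part, guaranteeing the combined map $g|_{H_0} \sqcup \mathrm{id}$ remains an immersion exactly as verified at the end of the proof of Lemma~\ref{lem:placeholder}. With the map established and the rank strictly dropping, the contradiction with compressibility of $H$ follows, establishing that $H_0$, and hence every connected subgraph and every disjoint union of them, is compressed. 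The closing clause about connected subgraphs is then the special case $n=1$.
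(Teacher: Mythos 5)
Your proposal has two genuine gaps. First, the reduction to $n=1$ cannot work: knowing that each connected subgraph $H_i$ is individually compressed does \emph{not} imply that $H'=\bigsqcup_i H_i$ is compressed. The paper points this out immediately before the lemma with the example $H_1\sqcup H_1$, which immerses onto $H_1$ and so fails to be compressed even though $H_1$ is. You correctly flag the overcounting problem (several $H_i$ landing in the same component of $K$), but the ``combined argument'' you then defer to is exactly the missing content of the lemma --- the whole point is that the $H_i$ being \emph{disjoint} subgraphs of one compressed $H$ is what rules out the $H_1\sqcup H_1$ pathology, and this must be used from the start, not patched in afterward. The paper never reduces to the connected case; it runs one argument with a single immersion $\phi\colon H'\to K$ defined on the whole disjoint union.

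Second, the placeholder/inflation--deflation mechanism is the wrong tool for extending the bad immersion to all of $H$. The complement $H-H_0$ need not be a union of arcs in the sense of Definition~\ref{defn:arc}: it can contain branching vertices and carry positive rank, so it cannot be collapsed to placeholder edges without changing $\chi$. Moreover, you cannot ``choose the placeholder labels to be disjoint from the labels occurring in $K$'': the labelling $\ell\colon H\to B$ is given, compressibility is a property of that specific labelled immersion, and Lemma~\ref{lem:placeholder} \emph{requires} the label-uniqueness hypothesis as input rather than letting you impose it by relabelling. The paper's actual device is simpler and needs no control of labels: form $\Gamma'=\bigl((H-H')\sqcup K\bigr)/\bigl(v\sim\phi(v)\bigr)$, identifying each boundary vertex $v\in\overline{(H-H')}\cap H'$ with its image $\phi(v)$, let $F\colon\Gamma'\to\Gamma$ be the composition of all foldings, and check that $\Phi|_{H-H'}=F\circ\id$, $\Phi|_{H'}=F\circ\phi$ defines an immersion $H\to\Gamma$. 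Any foldings forced at the boundary only decrease rank (Lemma~\ref{lem:folding}), so $\chi(H)=\chi(H-H')+\chi(H')<\chi(H-H')+\chi(K)=\chi(\Gamma')\leq\chi(\Gamma)$, contradicting the compressibility of $H$. Your high-level strategy --- extend the rank-dropping immersion on the subgraph to one on all of $H$ --- is the right one, but the gluing-and-folding construction, not the placeholder lemma, is what realizes it.
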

\begin{proof}
Suppose $\phi: H'\rightarrow K$ is an immersion such that$$\mrank\left(H'\right)=\displaystyle\sum_{i=1}^n\mrank\left(H_i\right)>\mrank\left(K\right).$$ Then $\chi\left(H'\right)<\chi\left(K\right)$. Let $\overline{\left(H-H'\right)}$ be the closure of $\left(H-H'\right)$ in $H$. Denote by $\mathcal{V}\subset H^0$ the set $\mathcal{V}=\overline{\left(H-H'\right)}\cap\left(H'\right)$.
So $\mathcal{V}$ is precisely the set of vertices at the boundaries of both $\overline{\left(H-H'\right)}$ and $H'$. Consider the graph $$\Gamma'=\frac{\left(H-H'\right)\sqcup K}{v\sim \phi\left(v\right)}\hspace{5mm}\forall\ v\in \mathcal{V}$$
where $v\sim \phi\left(v\right)$ is the identification of $v$ and $\phi\left(v\right)$. Let $F:\Gamma'\rightarrow \Gamma$ be the folding of $\Gamma'$ into $\Gamma$. Then
$$\chi\left(H\right)=\chi\left(H-H'\right)+\chi\left(H'\right)<\chi\left(H-H'\right)+\chi\left(K\right)=\chi\left(\Gamma'\right)\leq \chi\left(\Gamma\right)$$
Let $\Phi: H\rightarrow \Gamma$ be a map defined by $\Phi|_{H-H'}=F\circ\id|_{H-H'}$ and $\Phi|_{H'}=F\circ\phi$. By construction $\Phi$ maps vertices to vertices and edges to edges. Moreover, one checks that it is an immersion. Thus we have $\rank\left(H\right)>\rank\left(\Gamma\right)$, which contradicts the assumption.
\end{proof}
\begin{defn}
 Let $H$ be a finite graph. An edge $e\in H^1$ is \textit{essential} if $$\mrank\left(H-\left\{e\right\}\right)=\mrank\left(H\right)-1.$$
 A subset $\mathcal{E} \subset H^1$ is essential if $$\mrank\left(H-\mathcal{E}\right)=\mrank\left(H\right)-|\mathcal{E}|.$$
 $\mathcal{E}$ is \textit{maximal essential} if and only if $\mathcal{E}$ is essential and $\mrank\left(H\right)=|\mathcal{E}|$. When $\mathcal{E}$ is a maximal essential set, each connected component of $H-\mathcal{E}$ is an \textit{island}, and the set of all islands is denoted by $\mathcal{I}$. For an essential edge $e$, we denote by $C$ the core of the connected component of $\left(H-\mathcal{E}\right)\cup\left\{e\right\}$ containing $e$, and by $\mathcal{C}$ the set of such subgraphs. A graph may have several distinct maximal essential sets.\\
\noindent The following lemma was stated as a remark without proof in \cite{MR2914871}. We restate it and prove it below: 
\end{defn}
\begin{lem}\label{lem:essential set}
Let $H$ be a connected graph and $\mathcal{E}\subset H^1$ be a subset of edges. Then $\mathcal{E}$ is essential in $H$ if and only if  $\ \forall\  e \in \mathcal{E}$, the component of $\left(H-\mathcal{E}\right)$ containing $o\left(e\right)$ is not a tree and the component $\left(H-\mathcal{E}\right)$ containing $\tau\left(e\right)$ is not a tree. In particular, islands have rank $1$.
\end{lem}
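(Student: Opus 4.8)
The claim is an if-and-only-if characterization of essential edge-sets in terms of the tree-structure of the components of $H - \mathcal{E}$. Recall that $\mathcal{E}$ is essential means $\mrank(H - \mathcal{E}) = \mrank(H) - |\mathcal{E}|$, and since $\mrank$ for a finite graph is the sum of $\max\{0, -\chi\}$ over components, the plan is to track how $\mrank$ changes as we delete the edges of $\mathcal{E}$ one at a time. The key numerical observation is that deleting a single edge $e$ either drops $\mrank$ by exactly $1$ or leaves it unchanged, and deleting a set of $k$ edges drops $\mrank$ by at most $k$. So $\mathcal{E}$ is essential precisely when \emph{each} successive deletion drops the reduced rank by the maximal possible amount, $1$. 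I would reduce the whole problem to understanding a single-edge deletion and the condition under which it is rank-dropping.

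\medskip

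\noindent\textbf{Main steps.} First I would establish the single-edge version: for an edge $e$ in a graph $G$, deleting $e$ drops $\mrank$ by $1$ if and only if $e$ lies on a cycle in $G$, equivalently $e$ does not separate a tree component off or turn a core-containing component into a forest. More precisely, $\mrank(G - \{e\}) = \mrank(G) - 1$ iff both endpoints of $e$ sit in a part of $G-\{e\}$ that still contains a cycle, i.e. neither of the two components of $G-\{e\}$ meeting $o(e)$ and $\tau(e)$ is a tree. This is just a $\chi$ computation: removing $e$ raises $\chi$ of its component by $1$, and this raises $\mrank$ (actually \emph{lowers}) only when the affected component(s) had negative Euler characteristic both before and after in the right way. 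I would phrase this carefully using the formula $\mrank = \sum_i \max\{0, -\chi(G_i)\}$, splitting into the cases where $e$ is a separating edge (its removal disconnects a component into two pieces) versus a non-separating edge.

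\medskip

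\noindent\textbf{From one edge to the set.} Next I would run an induction / telescoping argument over the edges of $\mathcal{E}$. Enumerate $\mathcal{E} = \{e_1, \dots, e_k\}$. The condition ``$\mathcal{E}$ is essential'' is equivalent to the chain of equalities $\mrank(H - \{e_1, \dots, e_j\}) = \mrank(H) - j$ holding for all $j$, which by the single-edge lemma is equivalent to each $e_{j+1}$ being cycle-supported in $H - \{e_1, \dots, e_j\}$. The delicate point is to convert this \emph{sequential} condition into the \emph{intrinsic} condition stated in the lemma, namely that the components of the \emph{final} graph $H - \mathcal{E}$ containing $o(e)$ and $\tau(e)$ are non-trees, for every $e \in \mathcal{E}$ simultaneously. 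For the forward direction, if $\mathcal{E}$ is essential and some $e \in \mathcal{E}$ had $o(e)$ in a tree-component $T$ of $H - \mathcal{E}$, I would derive a contradiction by adding $e$ back: the component of $(H-\mathcal{E}) \cup \{e\}$ would fail to recover a unit of rank, forcing a strict inequality $\mrank(H - \mathcal{E}) > \mrank(H) - |\mathcal{E}|$. For the reverse direction, assuming the tree-condition holds for all edges, I would show directly that $\chi(H - \mathcal{E}) = \chi(H) + |\mathcal{E}|$ translates into the reduced-rank equality, using that no component of $H - \mathcal{E}$ incident to an $\mathcal{E}$-edge is a tree, so every deleted edge genuinely contributes to the rank drop.

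\medskip

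\noindent\textbf{Expected obstacle and the corollary.} The main subtlety is the interaction between edges of $\mathcal{E}$ that share components or endpoints: a naive one-at-a-time deletion can create transient tree components that later get ``reconnected'' conceptually, so the ordering could matter if one is not careful. I would handle this by arguing at the level of each component of $H - \mathcal{E}$ at once rather than edge-by-edge, comparing $H-\mathcal{E}$ to $H$ through the aggregate Euler-characteristic identity $\chi(H) = \chi(H-\mathcal{E}) - |\mathcal{E}|$ and the requirement that the $\max\{0, -\chi\}$ truncation never activates on a component touched by $\mathcal{E}$. Finally, for the last sentence (``islands have rank $1$''), recall that for a maximal essential set $\mrank(H) = |\mathcal{E}|$ and each island is a component of $H - \mathcal{E}$; I would note that each island must itself be a non-tree (it is incident to some essential edge) yet can contribute at most its share, and a short counting argument forces each island to have $\mrank$ exactly $1$, i.e. rank $1$.
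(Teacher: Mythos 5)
Your proposal is correct and follows essentially the same route as the paper: an Euler-characteristic analysis of how deleting (equivalently, re-inserting) a single edge changes the reduced rank, distinguishing the separating and non-separating cases, and then aggregating over $\mathcal{E}$ (you are in fact somewhat more explicit than the paper about that aggregation step). One small slip in your last paragraph: an island $I$ satisfies $\mrank(I)=0$, not $\mrank(I)=1$; it is the combination of $\mrank(I)=0$ with $I$ not being a tree that forces $\chi(I)=0$ and hence $\rank(I)=1$.
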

\begin{proof}
Let $e\in \mathcal{E}$.\\
$\left(\Rightarrow\right)$ Clearly if either component is a tree then the edge $e$ is no longer essential, as removing or adding it does not decrease or increase the reduced rank by $1$, contradicting the assumption.\\
$\left(\Leftarrow\right)$ Let $H_1, H_2$ be the connected components of $\left(H-\mathcal{E}\right)$ containing $o\left(e\right)$ and $\tau\left(e\right)$.\\
If $H_1=H_2$, then there is a path $p:I_d\rightarrow H_1$ such that $p\left(d\right)=o\left(e\right)$ and $p\left(0\right)=\tau\left(e\right)$. By assumption, $\rank\left(H_1\right)\geq 1$. Then adding the edge $e$ to $H_1$ will yield a new non-trivial cycle which is the concatenation of $p$ and $e$. This would increase the rank by $1$, thereby increasing the reduced rank by $1$, making $e$ essential.\\
If $H_1\neq H_2$, then $\rank\left(H_1\right)\geq 1$ and $\rank\left(H_2\right)\geq 1$. Adding $e$ to connect $H_1$ and $H_2$ will reduce the number of components, thus increasing the reduced rank. So $e$ is essential.\\
When $\mathcal{E}$ is maximal essential, we have $\mrank\left(H-\mathcal{E}\right)=\displaystyle\sum_{I\in \mathcal{I}}\mrank\left(I\right)=0$. So $\mrank\left(I\right)=0$ for any island $I\in \mathcal{I}$. Since no component of $H-\mathcal{E}$ is a tree, islands must have rank $1$.  
\end{proof}
\noindent Let $\mathcal{E}$ be a maximal essential set of $H$, and $\mathcal{E}_1\subsetneq \mathcal{E}$ be a proper non-empty subset. Let $H_1$ be the (possibly disjoint) union of the connected components of $\left(H-\mathcal{E}\right)\cup \mathcal{E}_1$ containing $\mathcal{E}_1$. Let $\mathcal{E}_1'$ be any maximal essential set of $H_1$, and $\tilde{\mathcal{E}}=\left(\mathcal{E}-\mathcal{E}_1\right)\sqcup \mathcal{E}_1'$. See Figure~\ref{fig:graph_13}. 
\begin{figure}[t]\centering
\includegraphics[width=0.6\textwidth]{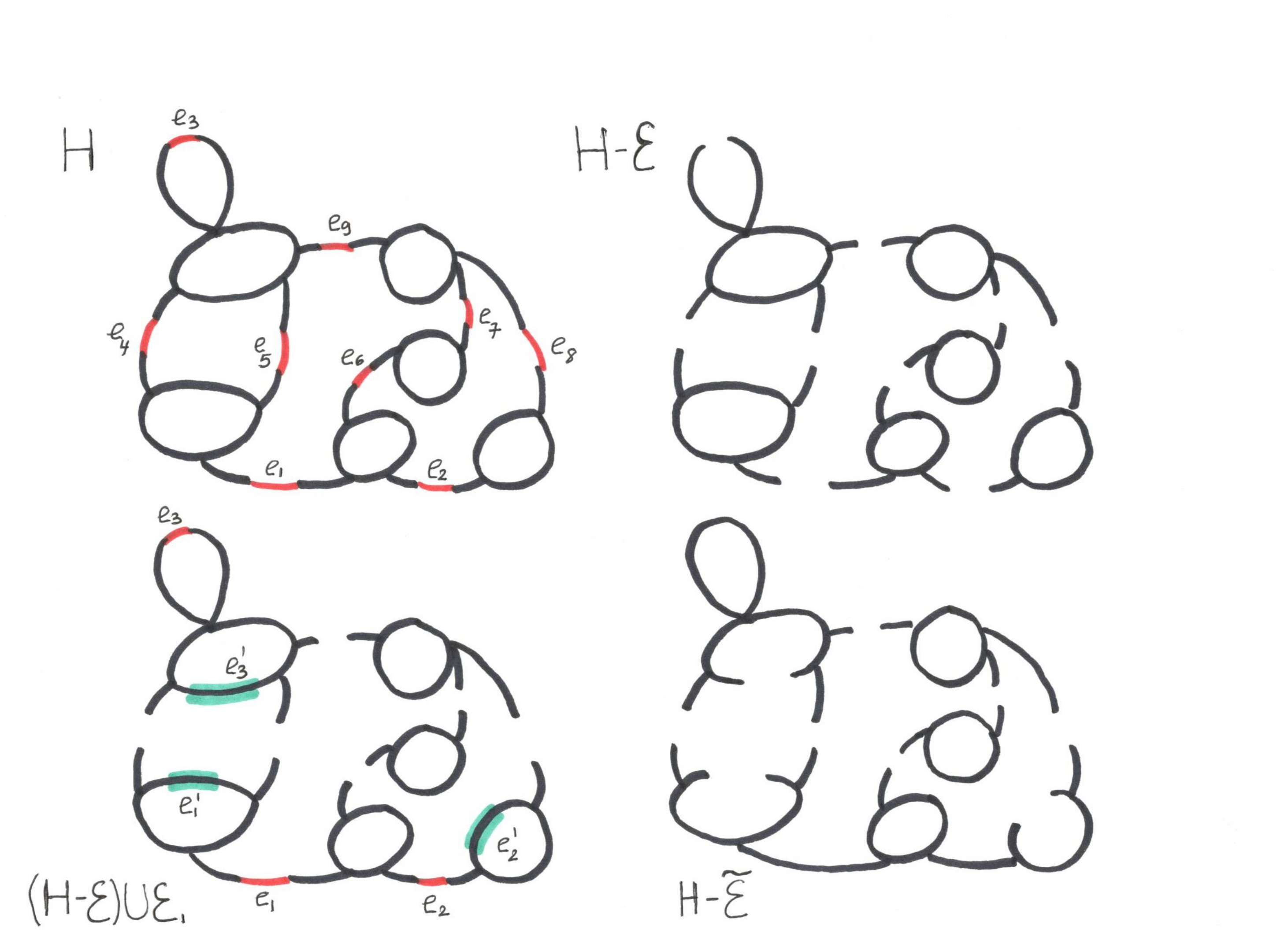}
\caption[Invariance of maximal essential sets]{\label{fig:graph_13}
In this example, we have $\mathcal{E}=\left\{e_1,\ldots,e_9\right\},\ \mathcal{E}_1=\left\{e_1,e_2,e_3\right\}$, and $\mathcal{E}_1'=\left\{e_1',e_2',e_3'\right\}$. Observe that $\mathcal{E}_1'$ is maximal essential in $H_1$, and $\mathcal{E}_1'\cup \left(\mathcal{E}-\mathcal{E}_1\right)=\tilde{\mathcal{E}}$ is maximal essential in $H$.}
\end{figure}
In this setting we have the following lemma:
\begin{lem}\label{lem:maximal essential}
$\tilde{\mathcal{E}}$ is maximal essential in $H$.
\end{lem}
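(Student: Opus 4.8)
The plan is to reduce the statement to two reduced-rank computations, since $\tilde{\mathcal{E}}$ is maximal essential in $H$ precisely when $|\tilde{\mathcal{E}}| = \mrank(H)$ together with $\mrank(H - \tilde{\mathcal{E}}) = 0$ (the second condition being equivalent to essentiality once the cardinality is pinned down). Throughout I will use that, by Lemma~\ref{lem:essential set}, every island (component of $H - \mathcal{E}$) has rank $1$, so that $\mrank(H - \mathcal{E}) = 0$ and $\mrank(H) = |\mathcal{E}|$.

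First I would analyze the intermediate graph $(H - \mathcal{E}) \cup \mathcal{E}_1$, obtained from the islands by restoring only the edges of $\mathcal{E}_1$. Its components split as $H_1 \sqcup J$, where $H_1$ gathers the components meeting $\mathcal{E}_1$ and $J$ is the disjoint union of the islands left untouched by $\mathcal{E}_1$, each component of $J$ still having rank $1$. Since $H_1 - \mathcal{E}_1$ is exactly the union of the islands comprising $H_1$, none of which is a tree, Lemma~\ref{lem:essential set} shows that $\mathcal{E}_1$ is essential in $H_1$; as $\mrank(H_1 - \mathcal{E}_1) = 0$, this gives $\mrank(H_1) = |\mathcal{E}_1|$. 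Thus $\mathcal{E}_1$ is itself maximal essential in $H_1$, so any maximal essential $\mathcal{E}_1'$ satisfies $|\mathcal{E}_1'| = \mrank(H_1) = |\mathcal{E}_1|$. Because $\mathcal{E} - \mathcal{E}_1$ is disjoint from $\mathcal{E}_1' \subset H_1^1$, this already yields $|\tilde{\mathcal{E}}| = |\mathcal{E} - \mathcal{E}_1| + |\mathcal{E}_1'| = |\mathcal{E}| = \mrank(H)$, settling the cardinality condition.

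It then remains to verify $\mrank(H - \tilde{\mathcal{E}}) = 0$. The main step here is the bookkeeping identity $H - \tilde{\mathcal{E}} = (H_1 - \mathcal{E}_1') \sqcup J$ as subgraphs of $H$: removing $\mathcal{E} - \mathcal{E}_1$ leaves the disjoint union $H_1 \sqcup J$, and further removing $\mathcal{E}_1' \subset H_1^1$ only alters the $H_1$ factor. Granting this, additivity of reduced rank over disjoint unions gives $\mrank(H - \tilde{\mathcal{E}}) = \mrank(H_1 - \mathcal{E}_1') + \mrank(J)$; the first term vanishes because $\mathcal{E}_1'$ is maximal essential in $H_1$, and the second vanishes because $J$ is a union of rank-$1$ islands. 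Hence $\mrank(H - \tilde{\mathcal{E}}) = 0 = \mrank(H) - |\tilde{\mathcal{E}}|$, so $\tilde{\mathcal{E}}$ is essential and, having cardinality $\mrank(H)$, maximal essential. I expect the only delicate point to be justifying the decomposition $H - \tilde{\mathcal{E}} = (H_1 - \mathcal{E}_1') \sqcup J$ cleanly, namely tracking that no edge of $\mathcal{E}_1$ joins $J$ to $H_1$ and that $\mathcal{E}_1'$ lies entirely inside $H_1$; after that, the reduced-rank arithmetic is immediate.
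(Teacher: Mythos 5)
Your proof is correct, and it reaches the conclusion by a route that differs in mechanics from the paper's. The paper first establishes the cardinality count $|\mathcal{E}_1|=\mrank\left(H_1\right)=|\mathcal{E}_1'|$ exactly as you do, but then verifies essentiality of $\tilde{\mathcal{E}}$ edge by edge via the non-tree criterion of Lemma~\ref{lem:essential set}: it splits into the cases $e\in\mathcal{E}_1'$ versus $e\in\mathcal{E}-\mathcal{E}_1$, and in the latter case further distinguishes whether the islands containing $o\left(e\right)$ and $\tau\left(e\right)$ lie inside $H_1$ or not, checking in each case that the relevant components of $H-\tilde{\mathcal{E}}$ have rank $1$. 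You instead package all of this into the single decomposition $H-\tilde{\mathcal{E}}=\left(H_1-\mathcal{E}_1'\right)\sqcup J$ and conclude by additivity of reduced rank over components, which collapses the case analysis into one computation; the underlying facts used (islands have rank $1$, $\mathcal{E}_1'$ is maximal essential in $H_1$) are the same. Your version is arguably cleaner and makes the bookkeeping explicit, whereas the paper's version stays closer to the criterion of Lemma~\ref{lem:essential set} and so needs no reformulation of ``maximal essential'' in terms of $\mrank\left(H-\tilde{\mathcal{E}}\right)=0$. Two small points to tidy up: Lemma~\ref{lem:essential set} is stated for connected $H$, while $H_1$ may be disconnected, so you should say explicitly that you apply it componentwise (the reduced rank and the criterion both decompose over components); and the ``delicate point'' you flag at the end is actually automatic, since $H_1$ and $J$ are by definition unions of distinct connected components of $\left(H-\mathcal{E}\right)\cup\mathcal{E}_1$, so no edge of $\mathcal{E}_1$ can join them, and $\mathcal{E}_1'\subset H_1^1$ because it is a maximal essential set of $H_1$.
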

\begin{proof}
We first note that $|\mathcal{E}_1|=\mrank\left(H_1\right)=|\mathcal{E}'_1|$. So $|\tilde{\mathcal{E}}|=\mrank\left(H\right)$. Let $e\in \tilde{\mathcal{E}}$. If $e\in \mathcal{E}_1'$, then by Lemma~\ref{lem:essential set}, $e$ is essential since the connected components containing its boundary vertices are islands in $H_1-\mathcal{E}_1'$. Otherwise, let $I, I'$ be the islands in $H-\mathcal{E}$ containing $o\left(e\right)$ and $\tau\left(e\right)$, respectively. (Note that it is possible to have $I=I'$). If $I, I'\not\subset H_1$, then $I, I'$ remain islands in $H-\tilde{\mathcal{E}}$, making $e$ essential.\\
Now suppose $I\subset H_1$, and $I_1$ is the connected component of $H_1-\mathcal{E}_1'$ containing $o\left(e\right)$. Then $I_1$ has necessarily rank $1$ since it is an island in $H-\tilde{\mathcal{E}}$, thus $e$ is essential.
\end{proof}
\noindent Let $\ell:H\rightarrow B$ be an immersion where $H$ is a finite compressed graph of rank $m$. Let $E=\left\{\mathcal{E}_1,\ldots,\mathcal{E}_k\right\}$ be the collection of all maximal essential sets of $H$, where $\mathcal{E}_i=\left\{e_{i,1}\ldots,e_{i,\left(m-1\right)}\right\}$ for all $1\leq i\leq k$.  To each $\mathcal{E}_i$ corresponds a set of connected core components $\mathcal{C}_i=\left\{C_{i,1},\ldots,C_{i,\left(m-1\right)}\right\}$, where $C_{i,j}$ is the core component of $\left(H-\mathcal{E}_i\right)\cup\left\{e_{i,j}\right\}$ containing $e_{i,j}$ with $\mrank\left(C_{i,j}\right)=1$ for all $1\leq i\leq k$ and $1\leq j\leq m-1$. Then in this setting we have the following theorem:
\begin{thm}\label{thm:essential distribution}
There exists a maximal essential set $\mathcal{E}$ such that $\ell|_{\mathcal{E}}$ is injective.
\end{thm}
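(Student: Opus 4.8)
The plan is to mimic the maximization argument used for Proposition~\ref{prop:label distribution}. Since every maximal essential set of $H$ has exactly $\mrank(H)=m-1$ edges, the statement ``$\ell|_{\mathcal{E}}$ is injective'' is equivalent to $|\ell(\mathcal{E})|=m-1$. So, among the finitely many maximal essential sets of $H$, I would pick one, $\mathcal{E}$, maximizing the number of distinct labels $s:=|\ell(\mathcal{E})|$, and assume toward a contradiction that $s<m-1$. Then some label $a$ is repeated, say $\ell(e_1)=\ell(e_2)=a$ with $e_1\ne e_2$ in $\mathcal{E}$. A preliminary observation that streamlines everything is that a maximal essential set meets each arc in at most one edge: any two edges of a common arc lie in exactly the same cycles, so removing both drops the reduced rank by only $1$, and the second can never be essential. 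Thus $\mathcal{E}$ is really a choice of one representative edge from each of $m-1$ distinct arcs, which places the argument in the same arc-transversal framework as Proposition~\ref{prop:label distribution}.

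The next step is to localize the repetition and repair it, using Lemma~\ref{lem:maximal essential} as an exchange move. I would set $\mathcal{E}_1=\{e_1,e_2\}$ and let $H_1$ be the union of the components of $(H-\mathcal{E})\cup\mathcal{E}_1$ that meet $\mathcal{E}_1$. By Lemma~\ref{lem:essential set} the components of $H-\mathcal{E}$ are islands of rank $1$, so $\mrank(H_1)=|\mathcal{E}_1|=2$, and by Lemma~\ref{lem:compressed subgraph} the subgraph $H_1$ is compressed. Applying Proposition~\ref{prop:label distribution} to the components of $H_1$ yields at least one label $b\ne a$ carried by some edge of $H_1$, whence one chooses a maximal essential set $\mathcal{E}_1'$ of $H_1$ realizing $b$ together with a second, distinct label. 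Lemma~\ref{lem:maximal essential} guarantees that $\tilde{\mathcal{E}}=(\mathcal{E}-\mathcal{E}_1)\sqcup\mathcal{E}_1'$ is again a maximal essential set of $H$, so if I can arrange $|\ell(\tilde{\mathcal{E}})|>s$ I contradict the maximality of $\mathcal{E}$.

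The hard part is exactly this last piece of bookkeeping: producing a genuine net gain in the global count of labels. Replacing $\{e_1,e_2\}$ deletes the lone occurrence of $a$ contributed by $\mathcal{E}_1$ and inserts two new labels from $\mathcal{E}_1'$, but these could already appear on edges of the untouched part $\mathcal{E}-\mathcal{E}_1$, in which case $|\ell(\tilde{\mathcal{E}})|$ fails to increase. To force an increase I would impose a placeholder-type isolation condition on the exchange, exactly as in Lemma~\ref{cor:generalized} and the proof of Proposition~\ref{prop:label distribution}: enlarge $\mathcal{E}_1$ (and hence $H_1$) so that it absorbs every edge of $\mathcal{E}$ carrying an over-represented label, and select the replacement labels inside $H_1$ so that they do not occur in $\ell(H-H_1)$. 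The urn Lemma~\ref{lem:urn} is the natural device for choosing such an internally diverse, externally isolated set of representatives, since it is precisely what converts ``too few labels in too few arcs'' into a deflation that violates compressedness. I expect the bulk of the work to lie in verifying that this enlarged region can always be chosen with $\mathcal{E}_1\subsetneq\mathcal{E}$ proper and nonempty so that Lemma~\ref{lem:maximal essential} applies, and in handling the small-rank anchor cases $m\le 2$ (trivial) and $m=3$ (a direct check), where there is no room to localize.
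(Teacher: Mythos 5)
Your setup is right and matches the paper's: pick a maximal essential set $\mathcal{E}$ maximizing $s=|\ell(\mathcal{E})|$, assume $s<m-1$, and use Lemma~\ref{lem:maximal essential} as the exchange move. Your preliminary observation that a maximal essential set meets each arc at most once is also correct (two edges of one arc leave a tree component between them, violating Lemma~\ref{lem:essential set}). But the proof stops exactly where you say it does: ``producing a genuine net gain in the global count of labels'' is the whole theorem, and your proposed repair does not close it. There is no reason an enlarged $\mathcal{E}_1$ with replacement labels disjoint from $\ell(H-H_1)$ exists --- every label occurring in $H_1$ may also occur outside $H_1$, and a single local exchange that strictly increases $|\ell(\mathcal{E})|$ may simply not exist. (Also, Proposition~\ref{prop:label distribution} is about arc-transversals, which have one edge per arc and are not maximal essential sets, so invoking it to ``realize $b$ together with a second distinct label'' inside a maximal essential set of $H_1$ is a conflation.)

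The paper's endgame is different and is the idea you are missing: it never forces a label-gaining exchange. Instead it uses the exchange move only negatively, to show that the rank-one core components $C_j$ attached to the edges of $\mathcal{E}$ are label-starved. With $X=B^1-\ell(\mathcal{E})$ and $Y$ the labels of those $e_j\in\mathcal{E}$ whose $C_j$ meets $X$, the maximality of $s$ forces $\ell(C_1)\cap X=\emptyset$ and $\ell(C_1)\cap Y=\emptyset$ (each violation would produce, via Lemma~\ref{lem:maximal essential}, a maximal essential set with $s+1$ labels). Splitting $\mathcal{E}=\mathcal{E}_1\cup\mathcal{E}_2$ according to whether $\ell(C_j)$ meets $Y$, the union $\bigcup_{e_i\in\mathcal{E}_2}C_i$ is a compressed subgraph (Lemma~\ref{lem:compressed subgraph}) of reduced rank $m-1-|\mathcal{E}_1|$ carrying at most $s-|\mathcal{E}_1|<m-1-|\mathcal{E}_1|$ labels, so it immerses into a bouquet of smaller reduced rank --- the contradiction comes from compressedness of a subgraph, not from finding a better $\mathcal{E}$. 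Without some substitute for this step, your argument has a genuine gap.
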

\begin{proof}
Let $\mathcal{E}\in E$ be such that $$\left|\ell\left(\mathcal{E}\right)\right|=s=\max\left\{\left|\ell\left(\mathcal{E}_i\right)\right|,\ 1\leq i\leq k\right\}.$$
Suppose $s<m-1$. Then there exist at least two edges $e_1, e_2\in \mathcal{E}$ such that $\ell\left(e_1\right)=\ell\left(e_2\right)$. Let $C_1, C_2$ be their corresponding core components, respectively. 
Since $H\rightarrow B$ is compressed, $\left|\ell\left(H^1\right)\right|\geq m$.
As in Lemma~\ref{lem:urn}, let $X=B^1-\ell\left(\mathcal{E}\right)$ and  $Y= \left\{\ell\left(e_j\right) \in B^1\mid e_j\in \mathcal{E},\ \text{and}\ \exists e\in C_j, \ell\left(e\right)\in X\right\}.$ 
Then $\ell\left(C_1\right)\cap X=\emptyset$, because if there exists an edge $e\in C_1$ such that $\ell\left(e\right)\in X$, then 
\begin{enumerate}
\item $e$ is essential in $C_1$ since $C_1$ is a core subgraph of reduced rank $1$ and removing any edge decreases the reduced rank to $0$.
\item By Lemma~\ref{lem:maximal essential}, $\tilde{\mathcal{E}}=\left(\mathcal{E}-\left\{e_1\right\}\right)\cup\left\{e\right\}$ is maximal essential. 
\end{enumerate}
Thus we have $\left|\ell\left(\tilde{\mathcal{E}}\right)\right|=s+1$, contradicting the maximality of $\left|\ell\left(\mathcal{E}\right)\right|$. Similarly, $\ell\left(C_1\right)\cap Y=\emptyset$. To see this, suppose that there exists an edge $e\in C_1$ such that $\ell\left(e\right)\in Y$. Let $e_j\in \mathcal{E}$ be such that $\ell\left(e\right)=\ell\left(e_j\right)\in Y$. Then there exists an edge $e_x\in C_j$ such that $\ell\left(e_x\right)\in X$. Since $C_1$ and $C_j$ are core subgraphs, both $e\in C_1$ and $e_x\in C_j$ are essential. In particular, $\mathcal{E}'=\left\{e,e_x\right\}$ is a maximal essential set of $C_1\cup C_j$. So $\tilde{\mathcal{E}}=\left(\mathcal{E}-\left\{e_1,e_j\right\}\right)\cup \left\{e,e_x\right\}$ is a maximal essential set of $H$ with $\left|\ell\left(\tilde{\mathcal{E}}\right)\right|=s+1$, which is a contradiction.\\
Now let $\mathcal{E}=\mathcal{E}_1\cup\mathcal{E}_2$ where $$\mathcal{E}_1=\left\{e_j\in \mathcal{E}\mid \ell\left(C_j\right)\cap Y\neq \emptyset\right\}$$ and  $$\mathcal{E}_2=\left\{e_i\in \mathcal{E}\mid \ell\left(C_i\right)\cap Y= \emptyset\right\}.$$
Then $$\mrank\left(\displaystyle\bigcup_{e_i\in \mathcal{E}_2}C_i\right)=m-1-\left|\mathcal{E}_1\right|.$$

\vspace{5mm} 
\noindent However, since $\left|\ell\left(\mathcal{E}\right)\right|=s<m-1$, we have 
\vspace{5mm} 
\begin{equation}\label{eq:contradiction}
\left|\ell\left(\displaystyle\bigcup_{e_i\in \mathcal{E}_2}C_i\right)\right|=s-\left|\mathcal{E}_1\right|<m-1-\left|\mathcal{E}_1\right|=\mrank\left(\displaystyle\bigcup_{e_i\in \mathcal{E}_2}C_i\right).
\end{equation}
But this is a contradiction since the union $\displaystyle\bigcup_{e_i\in \mathcal{E}_2}C_i$ is compressed by Lemma~\ref{lem:compressed subgraph}, and Inequality~\ref{eq:contradiction} says that this union surjects onto a bouquet of smaller reduced rank.
\end{proof}

%

\section{Echelon and Generalized Echelon Subgroups}\label{sec:ech}
\subsection{Orderability and the Hanna Neumann Conjecture}\label{sec:D}
\noindent The Hanna Neumann Conjecture (HNC) \cite{MR93537} states that for any finitely generated subgroups $\mathcal{H}$ and $\mathcal{K}$ of a free group $\mathcal{F}$, the following holds: $$\mrank\left(\mathcal{H}\cap\mathcal{K}\right)\leq \mrank\left(\mathcal{H}\right)\cdot\mrank\left(\mathcal{K}\right).$$ We will use some of the machinery introduced in the Mineyev-Dicks proof of HNC \cite{MR2914871} to 
generate a class of inert graphs which we call \textit{generalized echelon} graphs, and show that they are a generalization of \textit{echelon graphs} whose fundamental groups were defined by Rosenmann in \cite{MR3245107}. We begin by giving a summary of some definitions and facts taken from the proof of HNC, while the reader is referred to \cite{MR2914871} for more details.
\begin{defn}
A \textit{total ordering} of a set $X$, is a transitive binary relation $<$ such that for any $x, y\in X$, exactly one of the following holds:  $x<y$, $y<x$, or $x=y$. We refer to $X$ as an \textit{ordered set}. 
\end{defn}
\begin{rem}\label{rem:lexico}
Let $<_{_X},\ <_{_Y}$ be total orders on sets $X$ and $Y$ respectively. A \textit{lexicographic} order on the product $X\times Y$ is the total order $<$ defined as $$\left[\left(x_1,y_1\right)<\left(x_2,y_2\right)\right]\ \iff\ \left[\left(x_1<_{_X}x_2\right)\  \text{or}\ \left(y_1<_{_Y}y_2\ \text{if}\ x_1=x_2\right)\right].$$
\end{rem}
\begin{defn}
A group $G$ is \textit{left-orderable} if there exists a total order $<$ on $G$ that is invariant under left multiplication; ie: $\left(g_1<g_2\right)\ \Rightarrow \left(gg_1<gg_2\right)$ for all $g,g_1,g_2\in G$.  
\end{defn}
\begin{thm}
Free groups are left-orderable. 
\end{thm}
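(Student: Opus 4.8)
The plan is to realize $\mathcal{F}$ as a subgroup of the group of units of a ring of formal power series and transport a natural order from that ring. Fix a free basis $x_1,\ldots,x_n$ of $\mathcal{F}$ and let $R=\mathbb{Z}\langle\langle X_1,\ldots,X_n\rangle\rangle$ be the ring of formal power series in the non-commuting variables $X_1,\ldots,X_n$. The \emph{Magnus map} $\mu\colon\mathcal{F}\to R^\times$ is the homomorphism determined by $\mu(x_i)=1+X_i$, whose value on inverses of generators is $\mu(x_i^{-1})=1-X_i+X_i^2-\cdots$. The first and main input is the classical \emph{Magnus embedding theorem}: $\mu$ is injective. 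I would either cite this or sketch it by showing that for a nonempty reduced word $w$ the series $\mu(w)-1$ is nonzero, tracking the lowest-degree contribution. This injectivity is the part I expect to be the genuine obstacle; everything afterwards is leading-term bookkeeping.

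Next I would turn $R$ into an ordered additive group by enumerating monomials. List the monomials $m_1,m_2,\ldots$ in order of increasing total degree, breaking ties within each (finite) degree lexicographically using a fixed order of the variables, in the spirit of Remark~\ref{rem:lexico}. This yields a well-ordering of the monomials of type $\omega$, so every nonzero $p\in R$ has a well-defined first nonzero coefficient. For $g\in\mathcal{F}$ with $g\neq 1$, write $\mu(g)=1+u_g$; since $\mu$ is injective, $u_g\neq 0$, and $u_g$ involves only monomials of positive degree. I declare the positive cone to be $P=\{\,g\neq 1 : \text{the first nonzero coefficient of } u_g \text{ is positive}\,\}$ and define $a<b\iff a^{-1}b\in P$.

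To see this is a total order it suffices to check $\mathcal{F}=P\sqcup\{1\}\sqcup P^{-1}$ (for totality and antisymmetry) and that $P$ is closed under multiplication (for transitivity); left-invariance is immediate from the definition of $<$ through the cone. For the trichotomy I would note that if $\mu(g)=1+u_g$ with lowest term $c\,m_k$, then $\mu(g^{-1})=\mu(g)^{-1}=1-u_g+u_g^2-\cdots$; because the lowest monomial of $u_g$ has positive degree, $u_g^2$ and all higher powers contribute only in strictly larger degree, so the lowest term of $\mu(g^{-1})-1$ is $-c\,m_k$, of opposite sign. For closure, take $g,h\in P$ with $\mu(g)=1+u$ and $\mu(h)=1+v$, so that $\mu(gh)=1+(u+v+uv)$. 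Again $uv$ lives in strictly higher degree than the lowest terms of $u$ and of $v$, so the first nonzero coefficient of $u+v+uv$ equals that of $u+v$; since the leading coefficients of $u$ and of $v$ are both positive, no cancellation can occur at the minimal index and the result is positive. Hence $gh\in P$, completing the verification that $<$ is a left-invariant (indeed bi-invariant) total order on $\mathcal{F}$.
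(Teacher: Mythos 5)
Your proof is correct, but note that the paper does not actually prove this theorem: it states it and defers to a reference on orderability, so any complete argument here is ``a different route.'' What you give is the classical Magnus power-series proof, and the leading-term bookkeeping all checks out: the monomial order by total degree with lexicographic tie-breaking is a well-order of type $\omega$, so ``first nonzero coefficient'' is well defined; inverses flip the sign of that coefficient because every power $u_g^j$ with $j\geq 2$ is supported in degree at least twice the leading degree of $u_g$; products preserve positivity because $uv$ is supported strictly above $\min(\deg u,\deg v)$ and the two positive leading coefficients cannot cancel; and left-invariance is automatic once the order is defined through a positive cone (your parenthetical that the cone is conjugation-invariant, hence the order bi-invariant, is also correct and is more than the theorem asks for). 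The one point to flag is your proposed sketch of Magnus injectivity by ``tracking the lowest-degree contribution'': the lowest-degree part of $\mu(w)-1$ can vanish --- for $w=x_1x_2x_1^{-1}x_2^{-1}$ the degree-one part is zero --- so the standard argument instead isolates the coefficient of the top-degree monomial $X_{i_1}\cdots X_{i_k}$ attached to the reduced word $x_{i_1}^{\epsilon_1}\cdots x_{i_k}^{\epsilon_k}$. Since you correctly identify this embedding as the genuine input and offer to cite it, this is a wrinkle in the sketch rather than a gap; everything downstream of the embedding is complete.
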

\noindent In fact, the space of left-orders of free groups is uncountable. For a proof and more about orderablity, we refer the reader to \cite{https://doi.org/10.48550/arxiv.1408.5805}.\\
A graph $\Gamma$ is an \textit{ordered graph} if there is a total order on $\Gamma^1$. Let $\mathcal{G}$ be a group acting freely and cocompactly on an ordered graph $\Gamma$ such that the order of $\Gamma^1$ is invariant under this action. Then $\Gamma$ is a \textit{free cocompact ordered} $\mathcal{G}$-\textit{graph}. Henceforth, we will refer to such graphs simply as ordered $\mathcal{G}$-graphs. If $\Gamma$ is a tree (forest), then it is an ordered $\mathcal{G}$-tree (forest).\\
If $\widetilde{B}$ is the universal cover of $B$, then $\mathcal{F}$ acts freely and cocompactly on it by left multiplication. Let $\mathcal{D}$ be the set of representatives of each orbit of the action of $\mathcal{F}$ on $\widetilde{B}^1$. Note that $\mathcal{D}$ is in one-to-one correspondence with $B^1$. Let $<_\mathcal{_D}$ be any total order on $\mathcal{D}$ and $<_\mathcal{_F}$ be any total order on $\mathcal{F}$. As in Definition~\ref{rem:lexico}, let $<$ be the lexicographic order on the edge set $\widetilde{B}^1\cong\mathcal{F}\times \mathcal{D}$. Then this order is invariant under the action of $\mathcal{F}$, and thus $\widetilde{B}$ is an ordered $\mathcal{F}$-tree.\\
Let $\widetilde{B}$ be an ordered $\mathcal{F}$-tree. A \textit{line} $L\subset \widetilde{B}$ is a subtree in which each vertex has degree $2$. An edge $e\in \widetilde{B}^1$ is a \textit{bridge} if it is the largest edge in some line $L$. Let $\mathcal{B}\widetilde{B}$ be the set of bridges of $\widetilde{B}$. Then $\mathcal{F}$ acts on $\mathcal{B}\widetilde{B}$ and $\left|\mathcal{F}\setminus \mathcal{B}\widetilde{B}\right|<\infty$.\\ 
Let $\mathcal{H}\leq \mathcal{F}$ be a non trivial finitely generated subgroup, and $H\rightarrow B$ be the immersed graph representing it. Then the universal cover $\widetilde{H}$ of $H$ is an $\mathcal{H}$-subtree of $\widetilde{B}$. Moreover, the $\mathcal{F}$-invariant order on $\widetilde{B}^1$ restricts to an $\mathcal{H}$-invariant order on $\widetilde{H}^1$. In particular, $\mathcal{B}\widetilde{H}\subset \mathcal{B}\widetilde{B}$.\\
Lemmas~\ref{lem:hnc}, \ref{lem:hncc}, and \ref{lem:hnccc} below are taken from \cite{MR2914871}.
\begin{lem}\label{lem:hnc}
Let $\tilde{e}$ be a bridge in an ordered $\mathcal{H}$-tree $\widetilde{H}$. Then there exists a line $L\subset \widetilde{H}$ such that $\tilde{e}$ is the unique bridge in $L$.
\end{lem}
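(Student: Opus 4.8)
The plan is to reformulate the bridge condition in terms of one-sided descending rays, to build the required line by an extremal construction on each side of $\tilde{e}$, and then to verify that no edge other than $\tilde{e}$ can be a bridge. First I would record the following reformulation. Deleting an edge $e$ from $\widetilde{H}$ produces two components; call a ray issuing from an endpoint of $e$ into the corresponding component and using only edges smaller than $e$ an \emph{$e$-descending ray}. A line through $e$ having $e$ as its largest edge is exactly a pair of $e$-descending rays, one on each side, joined across $e$; hence $e$ is a bridge if and only if \emph{both} endpoints of $e$ admit an $e$-descending ray. In particular, since $\tilde{e}$ is a bridge, deleting it yields components $T_1\ni o(\tilde{e})$ and $T_2\ni \tau(\tilde{e})$, each carrying a $\tilde{e}$-descending ray.

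Next I would construct $L$ one side at a time; by symmetry consider $T_2$. Starting from $\tau(\tilde{e})$ I build a ray $v_0,v_1,v_2,\dots$ by a greedy \emph{minimal} rule: having arrived at $v_{i-1}$, among all forward edges at $v_{i-1}$ (those not pointing back along the ray) that begin an infinite ray all of whose edges are $<\tilde{e}$, I choose the smallest one as the next edge. The base step is possible because $\tilde{e}$ is a bridge, and the inductive step is possible because the edge just chosen was selected precisely to begin such an infinite ray, so the candidate set at the next vertex is again nonempty. Thus the process never terminates and produces a $\tilde{e}$-descending ray, keeping $\tilde{e}$ the largest edge. Doing the same inside $T_1$ and concatenating across $\tilde{e}$ yields a line $L$ with $\tilde{e}=\max L$.

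Finally I would show $\tilde{e}$ is the unique bridge of $L$. Let $f\neq\tilde{e}$ lie on $L$, say on the $T_2$ side with $f=\{v_{i-1},v_i\}$, and let $P$ (containing $v_{i-1}$ and $\tilde{e}$) and $Q$ (containing $v_i$) be the components of $\widetilde{H}-f$. Any $f$-descending ray out of $v_{i-1}$ into $P$ must begin with an edge at $v_{i-1}$ smaller than $f$, and this first edge cannot be a branch: a branch beginning an infinite ray below $f$ (hence below $\tilde{e}$) would have been an admissible strictly smaller choice at the step where $f$ was selected, contradicting minimality. So the only surviving possibility is that the ray first travels \emph{backward} along the already-built portion of $L$ and branches off later.

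Ruling out these backtracking near-side escapes is, I expect, the main obstacle, and a naive argument does not close it: an escape that re-enters the backbone and exits through a branch at an earlier vertex need only be smaller than $f$, which the local minimal rule does not forbid. The route I would pursue is to upgrade the local rule to a \emph{global} extremal choice—taking $L$ to be lexicographically smallest in its edge sequence read outward from $\tilde{e}$—and then to run an exchange/re-routing argument: graft the offending backward escape onto $L$ at the branch vertex to manufacture a competing descending ray, and show it contradicts extremality. Making this exchange actually lower the extremal invariant is the delicate point, and it is here that the $\mathcal{F}$-invariance of the order and the cocompactness of the action are used. Once both the immediate-branch case and the backtracking case are excluded, one side of $f$ carries no $f$-descending ray, so $f$ is not a bridge and $\tilde{e}$ is the unique bridge on $L$.
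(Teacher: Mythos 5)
The paper contains no proof of this statement to compare against: Lemma~\ref{lem:hnc} is one of the three results the author explicitly imports from Mineyev--Dicks (``Lemmas~\ref{lem:hnc}, \ref{lem:hncc}, and \ref{lem:hnccc} below are taken from \cite{MR2914871}''), so your attempt can only be judged on its own merits. Your opening reformulation is correct --- $e$ is a bridge if and only if each endpoint admits an $e$-descending ray into its side of $\widetilde{H}-e$ --- and your greedy construction does yield a line $L$ with $\tilde{e}=\max L$. The problem is that everything after that is the actual content of the lemma, and you concede you cannot close it.

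The gap is not merely a missing verification; the repair you sketch does not work. The ``globally lexicographically smallest edge sequence read outward from $\tilde{e}$'' is the same ray your greedy rule produces (choosing the least admissible edge at each step is precisely how one realizes the lex-least admissible sequence), so ``upgrading'' to it changes nothing, and the exchange you propose only yields the inequality $g_{j+1}<h$ rather than a contradiction. Concretely, the following configuration defeats the construction and is excluded by nothing you prove: at $v_0=\tau(\tilde{e})$ there are two admissible forward edges $g_1<h$, so the rule takes $g_1$; at $v_1$ it takes $g_2$ with $g_1<h<g_2$, because the other branch at $v_1$ carries only edges larger than $g_2$; meanwhile the branch through $h$ and the far side of $g_2$ carry only edges smaller than $g_2$. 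Then $g_2$ is a bridge --- its near-side $g_2$-descending ray backtracks across $g_1$ and escapes through $h$ --- the greedy line contains it, and the line the lemma demands passes through $h$ instead, all perfectly consistent with $g_1<h$. Such a configuration cannot be ruled out by any argument that, like yours, never invokes the $\mathcal{F}$-invariance of the order or the cocompactness of the action; you correctly name these as the missing ingredients but give no indication of how they enter. As it stands the proposal establishes that $\tilde{e}$ lies on a line in which it is maximal (which is just the hypothesis restated) and not that some line contains no other bridge, which is the entire point.
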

\begin{lem}\label{lem:hncc}
Let $\mathcal{H}\leq \mathcal{F}$ be a non trivial finitely generated subgroup and $H\rightarrow B$ be the immersed graph representing it. Let $\widetilde{H}\subset \widetilde{B}$ be the universal cover of $H$. Suppose $\Gamma\subset \widetilde{H}$ is an ordered  $\mathcal{H}$-forest containing no bridges. Then $\mrank\left(\mathcal{H}\setminus \Gamma\right)=0$. In particular, $\mathcal{H}\setminus \Gamma$ has cyclic connected components.
\end{lem}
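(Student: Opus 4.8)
The plan is to reduce to a single connected component of $\Gamma$ and then show that any component whose quotient has rank at least $2$ must contain a bridge, contradicting the hypothesis. First I would record the reduction. The components of $\mathcal{H}\setminus\Gamma$ are exactly the $\mathcal{H}$-orbits of the components of $\Gamma$; if $C$ is such a component with stabilizer $\mathcal{H}_C=\Stab_{\mathcal H}(C)$, then $C$ is a subtree on which $\mathcal{H}_C$ acts freely and cocompactly, so $\mathcal{H}_C\setminus C$ is a connected graph with $\pi_1\cong\mathcal{H}_C$, and hence $\mrank\left(\mathcal{H}_C\setminus C\right)=\max\{0,\rank(\mathcal{H}_C)-1\}$. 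Summing over orbits gives $\mrank(\mathcal{H}\setminus\Gamma)=\sum_{[C]}\max\{0,\rank(\mathcal{H}_C)-1\}$, so it suffices to prove that $\rank(\mathcal{H}_C)\le 1$ for every component $C$.

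So suppose some $\rank(\mathcal{H}_C)\ge 2$; I aim for a bridge inside $C$. The key geometric input is the interaction between the $\mathcal{H}_C$-invariant order and axes. Since the action is free, every nontrivial $g\in\mathcal{H}_C$ is hyperbolic with an axis $A_g\subset C$, which is a line. For an edge $e$ on $A_g$ we have $e\ne ge$, and applying powers of $g$ together with invariance of the order shows each orbit $\{g^ne\}$ is strictly monotone; as $A_g$ carries only finitely many $g$-orbits of edges (cocompactness), the edges grow without bound toward one end of $A_g$ and shrink without bound toward the other. I call the latter the \emph{decreasing end} $d_g\in\partial C$: along the ray of $A_g$ toward $d_g$ the edges are bounded above, their maximum being attained within the first period of the ray. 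Note that an axis is never the line of a bridge, since its increasing end has no largest edge; bridges must therefore come from geodesics both of whose ends are decreasing.

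Next I would produce two distinct decreasing ends. The decreasing end is equivariant: for $k\in\mathcal{H}_C$, the element $k$ is an order-preserving isometry of $C$ sending $A_g$ to $A_{kgk^{-1}}$, whence $d_{kgk^{-1}}=k\cdot d_g$. If all nontrivial elements shared a single decreasing end $d$, equivariance would force $k\cdot d=d$ for all $k$, i.e. $\mathcal{H}_C$ fixes the end $d$; but a free group acting freely on a tree and fixing an end injects into $\integers$ via the Busemann (translation-number) homomorphism, hence is cyclic, contradicting $\rank(\mathcal{H}_C)\ge 2$. Thus there are $g,h\in\mathcal{H}_C$ with $d_g\ne d_h$. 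Let $L\subset C$ be the line from $d_g$ to $d_h$ (it lies in $C$ by convexity). Near $d_g$ it shares a tail with $A_g$ and near $d_h$ a tail with $A_h$; on each tail the edges are bounded above, and the middle segment is finite, so the edge set of $L$ has a largest element $e_{\max}$. Then $e_{\max}$ is the largest edge of the line $L$, hence a bridge lying in $C\subset\Gamma$, contradicting the assumption that $\Gamma$ has no bridges. Therefore $\rank(\mathcal{H}_C)\le 1$ for every $C$, giving $\mrank(\mathcal{H}\setminus\Gamma)=0$ and cyclic components. (This route is self-contained and does not invoke Lemma~\ref{lem:hnc}.)

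The main obstacle is the passage from the algebraic hypothesis (order invariant under $\mathcal{H}_C$) to the asymptotic structure of a single axis, namely that edges are cofinally monotone along an axis and bounded above toward a well-defined decreasing end, and then ruling out a common fixed end. This is precisely where the ordering hypothesis is essential and where the distinction between axes and the bridge-producing geodesic between two distinct decreasing ends is made; the remaining steps are formal.
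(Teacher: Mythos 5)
First, a point of comparison: the paper gives no proof of this lemma at all --- it is explicitly imported from Mineyev--Dicks \cite{MR2914871} --- so your argument has to stand on its own. Your reduction to a single component $C$ with stabilizer $\mathcal{H}_C$ acting freely and cocompactly, the identity $\mrank(\mathcal{H}\setminus\Gamma)=\sum_{[C]}\max\{0,\rank(\mathcal{H}_C)-1\}$, and the reformulation ``$\rank(\mathcal{H}_C)\ge 2$ forces a bridge'' are all correct and are the right shape for the statement. The gap is the sentence asserting that along an axis $A_g$ ``the edges grow without bound toward one end and shrink without bound toward the other.'' Invariance of the order gives only that each individual $\langle g\rangle$-orbit $\{g^n e\}$ is monotone in $n$; it does not give that the finitely many orbits of edges on $A_g$ are all monotone in the \emph{same} direction. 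Concretely, on a line with vertices $v_n$, edges $g_n=[v_n,v_{n+1}]$ and $a\cdot g_n=g_{n+2}$, the order in which $g_{2n}$ increases in $n$, $g_{2n+1}$ decreases in $n$, and every even-indexed edge is below every odd-indexed edge is a perfectly good invariant total order for which the edges are unbounded above toward \emph{both} ends. In the paper's own lexicographic setup the direction of the $\langle g\rangle$-orbit of an edge lying over $x\in\mathcal{F}$ is governed by the sign of $x^{-1}gx$ in $<_{_\mathcal{F}}$, which is not constant in $x$ unless $<_{_\mathcal{F}}$ is bi-invariant --- and the paper only takes a left-invariant order. So the ``decreasing end'' $d_g$ need not exist, and everything downstream of it --- the equivariance $d_{kgk^{-1}}=k\cdot d_g$, the production of two distinct decreasing ends, and the line $L$ with an attained maximal edge --- is unsupported.

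The pieces that do not depend on this are fine: a free group acting freely on a tree and fixing an end is indeed cyclic via the translation-number homomorphism, and a line both of whose tails have attained maxima does have a globally largest edge, hence yields a bridge. One honest repair is to require $<_{_\mathcal{F}}$ to be bi-invariant (free groups are bi-orderable); then $x^{-1}gx>1\iff g>1$, every axis has a well-defined decreasing end, and your argument goes through --- and that restricted version would still suffice for the application in Theorem~\ref{prop:echelon graph to subgroup}, since one is free to choose the order there. But that is not the lemma as stated, and it is not what Mineyev--Dicks prove. As written, the passage from a merely invariant order to the asymptotic monotonicity of a whole axis --- the step you yourself identify as ``precisely where the ordering hypothesis is essential'' --- is exactly the step that is missing.
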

\noindent Recall that a set $\mathcal{E}\subset H^1$ is essential if no component of $H-\mathcal{E}$ is a tree. It is maximal essential if, in addition we have $\left|\mathcal{E}\right|=\mrank\left(H\right)$.
\begin{lem}\label{lem:hnccc}
Let $\mathcal{H}\leq \mathcal{F}$ be a non trivial finitely generated subgroup and $\widetilde{H}$ be an ordered $\mathcal{H}$-tree. Then $\mathcal{H}\setminus \mathcal{B}\widetilde{H}$ is a maximal essential set, hence $\left|\mathcal{H}\setminus \mathcal{B}\widetilde{H}\right|=\mrank\left(H\right)$.
\end{lem}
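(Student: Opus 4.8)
The plan is to realize $\mathcal{H}\setminus\mathcal{B}\widetilde{H}$ as the set of edges of $H$ left over after deleting a distinguished bridge-free subforest, and then to read off both essentiality and the exact cardinality from Lemma~\ref{lem:hncc} together with an elementary Euler characteristic count. First I would set $\Gamma=\widetilde{H}-\mathcal{B}\widetilde{H}$. Since $\mathcal{B}\widetilde{H}$ is $\mathcal{H}$-invariant and $\widetilde{H}$ is a tree, $\Gamma$ is an $\mathcal{H}$-invariant subforest of $\widetilde{H}$, and by construction none of its edges is a bridge; hence Lemma~\ref{lem:hncc} applies to $\Gamma$.

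Next I would pass to the quotient. Because $\mathcal{H}$ acts freely on $\widetilde{H}$, the $\mathcal{H}$-orbits of edges correspond bijectively to the edges of $H=\mathcal{H}\setminus\widetilde{H}$. Writing $\mathcal{E}=\mathcal{H}\setminus\mathcal{B}\widetilde{H}\subset H^1$, we then have $|\mathcal{E}|=|\mathcal{H}\setminus\mathcal{B}\widetilde{H}|$ and
$$H-\mathcal{E}=\mathcal{H}\setminus\bigl(\widetilde{H}-\mathcal{B}\widetilde{H}\bigr)=\mathcal{H}\setminus\Gamma.$$
Lemma~\ref{lem:hncc} then yields $\mrank\left(H-\mathcal{E}\right)=0$ and, crucially, that every connected component of $H-\mathcal{E}$ is cyclic, i.e.\ has rank exactly $1$; in particular no component of $H-\mathcal{E}$ is a tree. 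By the characterization of essential sets in Lemma~\ref{lem:essential set}, this already shows that $\mathcal{E}$ is essential.

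Finally I would pin down the cardinality. Each component of $H-\mathcal{E}$ has rank $1$, hence Euler characteristic $0$, so $\chi\left(H-\mathcal{E}\right)=0$. Deleting the $|\mathcal{E}|$ edges raises the Euler characteristic by $|\mathcal{E}|$, so $0=\chi\left(H-\mathcal{E}\right)=\chi\left(H\right)+|\mathcal{E}|$, whence $|\mathcal{E}|=-\chi\left(H\right)=\rank\left(H\right)-1=\mrank\left(H\right)$, using that $H$ is connected of positive rank. Thus $\mrank\left(H-\mathcal{E}\right)=0=\mrank\left(H\right)-|\mathcal{E}|$, confirming again that $\mathcal{E}$ is essential, and since $|\mathcal{E}|=\mrank\left(H\right)$ it is \emph{maximal} essential, giving $|\mathcal{H}\setminus\mathcal{B}\widetilde{H}|=\mrank\left(H\right)$ as claimed.

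The main obstacle is the step forbidding tree components of $H-\mathcal{E}$: it is precisely the ``cyclic components'' half of Lemma~\ref{lem:hncc} (and not merely the weaker conclusion $\mrank=0$) that rules these out, and this is exactly what forces the count to land on $\mrank\left(H\right)$ rather than merely bounding $|\mathcal{E}|$ from below by it. One must therefore verify that $\Gamma$ is genuinely bridge-free so that this sharper conclusion is available; here that is immediate because $\Gamma$ is the complement of \emph{all} bridges, but it is the pivotal hypothesis on which the whole argument turns.
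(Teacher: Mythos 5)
The paper never proves this lemma itself --- it is imported from Mineyev--Dicks \cite{MR2914871} without proof --- so there is no in-paper argument to compare against line by line; your reconstruction has to stand on its own. Its skeleton (pass to the bridge-free forest $\Gamma=\widetilde{H}-\mathcal{B}\widetilde{H}$, quotient by the free action, invoke Lemma~\ref{lem:hncc}, finish with an Euler characteristic count) is the right one, and the count itself is correct. But there is a genuine gap at exactly the step you flag as pivotal: you read ``cyclic connected components'' in Lemma~\ref{lem:hncc} as ``rank exactly $1$''. The lemma cannot mean that. Its second sentence is introduced by ``in particular'' as a consequence of $\mrank\left(\mathcal{H}\setminus\Gamma\right)=0$, which only says each component has rank at most $1$; a tree has trivial, hence cyclic, fundamental group and is not excluded. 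Indeed, for a general bridge-free $\mathcal{H}$-subforest, tree components are unavoidable: take $\Gamma$ to be the $\mathcal{H}$-orbit of a single non-bridge edge (with its endpoints, in distinct orbits); then $\mathcal{H}\setminus\Gamma$ is a single edge, a tree. So Lemma~\ref{lem:hncc} alone does not forbid tree components of $H-\mathcal{E}$, and if $t>0$ of them occur your count gives $|\mathcal{E}|=\mrank\left(H\right)+t$, destroying both essentiality and the cardinality claim.

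The missing ingredient is Lemma~\ref{lem:hnc}, which you never use. Given a bridge $\tilde{e}$, choose a line $L$ in which $\tilde{e}$ is the \emph{unique} bridge; the two rays of $L-\{\tilde{e}\}$ then contain no bridges and so survive into $\Gamma=\widetilde{H}-\mathcal{B}\widetilde{H}$. Hence every component of $\Gamma$ meeting an endpoint of a bridge is infinite; since its quotient is a finite component of $H-\mathcal{E}$, its stabilizer is nontrivial, so that component has rank at least $1$, hence exactly $1$ by $\mrank=0$. One should also note that $\mathcal{E}\neq\emptyset$ for nontrivial $\mathcal{H}$ (otherwise Lemma~\ref{lem:hncc} applied to all of $\widetilde{H}$ forces $\mrank\left(H\right)=0$, and the rank-one case is excluded since $\widetilde{H}$ would itself be a line and hence contain a bridge), and then use connectivity of $H$ to see that every component of $H-\mathcal{E}$ meets an endpoint of some $e\in\mathcal{E}$. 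With tree components excluded in this way, your appeal to Lemma~\ref{lem:essential set} and the Euler characteristic computation go through verbatim.
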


\noindent Immediate corollaries to Remark~\ref{rem:cardinality of fibers} and Lemmas~\ref{lem:hnc}, \ref{lem:hncc}, and \ref{lem:hnccc} are the following:
\begin{cor}\label{cor:hncccc}
If $\mathcal{H}\setminus \mathcal{B}\widetilde{H}\rightarrow B$ is injective then $H$ is inert.
\end{cor}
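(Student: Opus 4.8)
The plan is to run the Mineyev--Dicks bridge count for the single intersection $\mathcal H\cap\mathcal K$, using the injectivity hypothesis to turn a potential overcount of essential edges into an honest injection. Fix an arbitrary finitely generated $\mathcal K\le\mathcal F$ with immersed graph $K\to B$; by Lemma~\ref{lem:inert iff} it suffices to prove $\rank(\mathcal H\cap\mathcal K)\le\rank(\mathcal K)$, and we may assume $\mathcal H$, $\mathcal K$, and $\mathcal H\cap\mathcal K$ are all non-trivial, the degenerate cases being immediate. First I would realize the relevant groups on nested subtrees of the ordered $\mathcal F$-tree $\widetilde B$: inside $\widetilde B$ sit the $\mathcal H$-tree $\widetilde H$ and the $\mathcal K$-tree $\widetilde K$, and I take $\widetilde A$ to be the minimal $(\mathcal H\cap\mathcal K)$-invariant subtree, so that $\widetilde A\subseteq\widetilde H\cap\widetilde K$ and $\mathcal H\cap\mathcal K$ (finitely generated by Howson) acts freely and cocompactly on the ordered tree $\widetilde A$, with quotient the core of the based fiber product.

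The structural observation I would isolate as a short lemma is that bridges are inherited downward along subtree inclusions carrying the restricted order: any line of $\widetilde A$ is also a line of $\widetilde K$, so an edge that is largest in some line of $\widetilde A$ is largest in that same line viewed inside $\widetilde K$; hence $\mathcal B\widetilde A\subseteq\mathcal B\widetilde K$, and symmetrically $\mathcal B\widetilde A\subseteq\mathcal B\widetilde H$, all regarded as subsets of $\widetilde B^1$. Applying Lemma~\ref{lem:hnccc} to $\widetilde A$ and to $\widetilde K$ produces maximal essential sets $R=(\mathcal H\cap\mathcal K)\setminus\mathcal B\widetilde A$ and $Q=\mathcal K\setminus\mathcal B\widetilde K$ with $|R|=\rank(\mathcal H\cap\mathcal K)-1$ and $|Q|=\rank(\mathcal K)-1$; writing $P=\mathcal H\setminus\mathcal B\widetilde H$, the hypothesis is exactly that the labelling $\ell|_P$ is injective.

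The heart of the proof is then the map $R\to Q$ sending the $(\mathcal H\cap\mathcal K)$-orbit of a bridge $\tilde e\in\mathcal B\widetilde A$ to its $\mathcal K$-orbit, which is well defined since $\mathcal H\cap\mathcal K\le\mathcal K$. To see it is injective, suppose $\tilde e_1,\tilde e_2\in\mathcal B\widetilde A$ satisfy $\tilde e_2=k\tilde e_1$ for some $k\in\mathcal K$. Because the $\mathcal F$-action preserves labels, $\tilde e_1$ and $\tilde e_2$ carry the same edge-label of $B$; as each $\tilde e_i$ is also a bridge of $\widetilde H$, its $\mathcal H$-orbit lies in $P$ and bears that common label, so injectivity of $\ell|_P$ forces $\tilde e_1$ and $\tilde e_2$ into a single $\mathcal H$-orbit, say $\tilde e_2=h\tilde e_1$ with $h\in\mathcal H$. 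Then $h^{-1}k$ fixes the edge $\tilde e_1$, and since $\mathcal F$ acts freely on $\widetilde B^1$ we conclude $h=k\in\mathcal H\cap\mathcal K$, so $\tilde e_1$ and $\tilde e_2$ already share an $(\mathcal H\cap\mathcal K)$-orbit. Injectivity of $R\to Q$ gives $\rank(\mathcal H\cap\mathcal K)-1=|R|\le|Q|=\rank(\mathcal K)-1$, which is the desired inequality.

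The genuinely new input --- and the step I expect to require the most care to phrase correctly rather than being technically hard --- is the label-to-orbit implication above: injectivity of $\ell$ on $P$ is exactly what prevents two distinct $(\mathcal H\cap\mathcal K)$-orbits of bridges from fusing into a single $\mathcal K$-orbit, which is what would break the count for a general subgroup. The remaining ingredients are bookkeeping: the bridge-inheritance lemma is immediate from the definition of a line, and all conversions between ranks, reduced ranks, and sizes of maximal essential sets are furnished by Lemma~\ref{lem:hnccc}. I would finish by checking the degenerate cases separately, where $\rank(\mathcal H\cap\mathcal K)\le\rank(\mathcal K)$ holds trivially, and then invoke Lemma~\ref{lem:inert iff} to pass from $\mathcal H$ to $H$.
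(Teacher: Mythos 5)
Your argument is correct and is essentially the proof the paper intends: the paper states this corollary without a written proof, declaring it immediate from Remark~\ref{rem:cardinality of fibers} and Lemmas~\ref{lem:hnc}--\ref{lem:hnccc}, and your injection $R\to Q$ of bridge orbits, with the step ``$h^{-1}k$ fixes an edge of a free action, so $h=k\in\mathcal H\cap\mathcal K$,'' is exactly the content of Remark~\ref{rem:cardinality of fibers} transported to the trees. The only cosmetic difference is that you work with minimal invariant subtrees and orbits where the paper's citations gesture at the fiber product; the count and the use of the injectivity hypothesis are the same.
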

\noindent Note that the contrapositive to Corollary~\ref{cor:hncccc} is a statement about the space of orderings of $\mathcal{F}$, that is:
\begin{cor}\label{cor:hnccccc}
If $H$ is not inert, then there is no left-order on $\mathcal{F}$ for which $\mathcal{H}\setminus \mathcal{B}\widetilde{H}\rightarrow B$ is injective. 

\end{cor}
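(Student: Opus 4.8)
The plan is simply to invoke the logical contrapositive of Corollary~\ref{cor:hncccc}, being careful about the (implicit) quantifier over left-orders. The key observation is that inertness of $H$ is intrinsic to the subgroup $\mathcal{H}\leq \mathcal{F}$ and makes no reference to any ordering, whereas the bridge set $\mathcal{B}\widetilde{H}$—and hence the map $\mathcal{H}\setminus \mathcal{B}\widetilde{H}\rightarrow B$—is defined relative to a chosen $\mathcal{F}$-invariant order on $\widetilde{B}^1$. Keeping these two dependencies straight is the whole content of the argument.

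First I would fix an arbitrary left-order on $\mathcal{F}$ and recall that, by the construction reviewed above, it induces an $\mathcal{F}$-invariant lexicographic order on $\widetilde{B}^1\cong \mathcal{F}\times \mathcal{D}$, which in turn restricts to an $\mathcal{H}$-invariant order on $\widetilde{H}^1$; this determines $\mathcal{B}\widetilde{H}$ and therefore the map $\mathcal{H}\setminus \mathcal{B}\widetilde{H}\rightarrow B$. For this one fixed order, Corollary~\ref{cor:hncccc} reads: if the map is injective, then $H$ is inert. Contraposing this single implication yields directly: if $H$ is not inert, then the map associated to this order fails to be injective.

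Finally, since the order was arbitrary while inertness of $H$ does not depend on it, the per-order implication holds uniformly for every left-order of $\mathcal{F}$. Hence if $H$ fails to be inert, no left-order can make $\mathcal{H}\setminus \mathcal{B}\widetilde{H}\rightarrow B$ injective, which is exactly the assertion. There is no genuine obstacle here; the only subtlety worth flagging is that the universal quantifier over orders in the conclusion comes for free precisely because inertness is order-independent, so the contrapositive taken one order at a time immediately upgrades to the stated \emph{for all orders} form.
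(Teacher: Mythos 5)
Your proof is correct and matches the paper's approach exactly: the paper itself presents this corollary as nothing more than the contrapositive of Corollary~\ref{cor:hncccc}, offering no further argument. Your care with the quantifier over orders (noting that inertness is order-independent while $\mathcal{B}\widetilde{H}$ is not) is a welcome clarification of what the paper leaves implicit.
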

\subsection{Generalized Echelon Graphs}
\noindent Let $\ell:H\rightarrow B$ be an immersed graph and $\mathcal{E}=\left\{e_1,\ldots,e_{m}\right\}$ be a maximal essential set of $H$. For each $i$, let $A_i$ be the arc containing the essential edge $e_i$, and let $C_i$ be the component of $H-\displaystyle\bigcup_{j\neq i}A_j$ containing the edge $e_i$. Finally, let $H_i=\displaystyle\bigcup_{j=1}^{i}C_j$. See Figure~\ref{fig:graph_11} for an illustration. In this setting, we make the following definition:
\begin{defn}[Generalized Echelon Graphs]\label{defn:reduced echelon}
An immersed graph $\ell:H\rightarrow B$ is \textit{generalized echelon} if $H$ has a maximal essential set $\mathcal{E}=\left\{e_1,\ldots,e_{m}\right\}$ such that
\begin{enumerate}
\item $\ell|_{\mathcal{E}}$ is injective, 
\item $\ell\left(H_i\right)\cap \ell \left(\left\{ e_{i+1},\ldots,e_{m} \right\}\right)=\emptyset$.
\end{enumerate}
\begin{figure}[t]\centering
\includegraphics[width=0.6\textwidth]{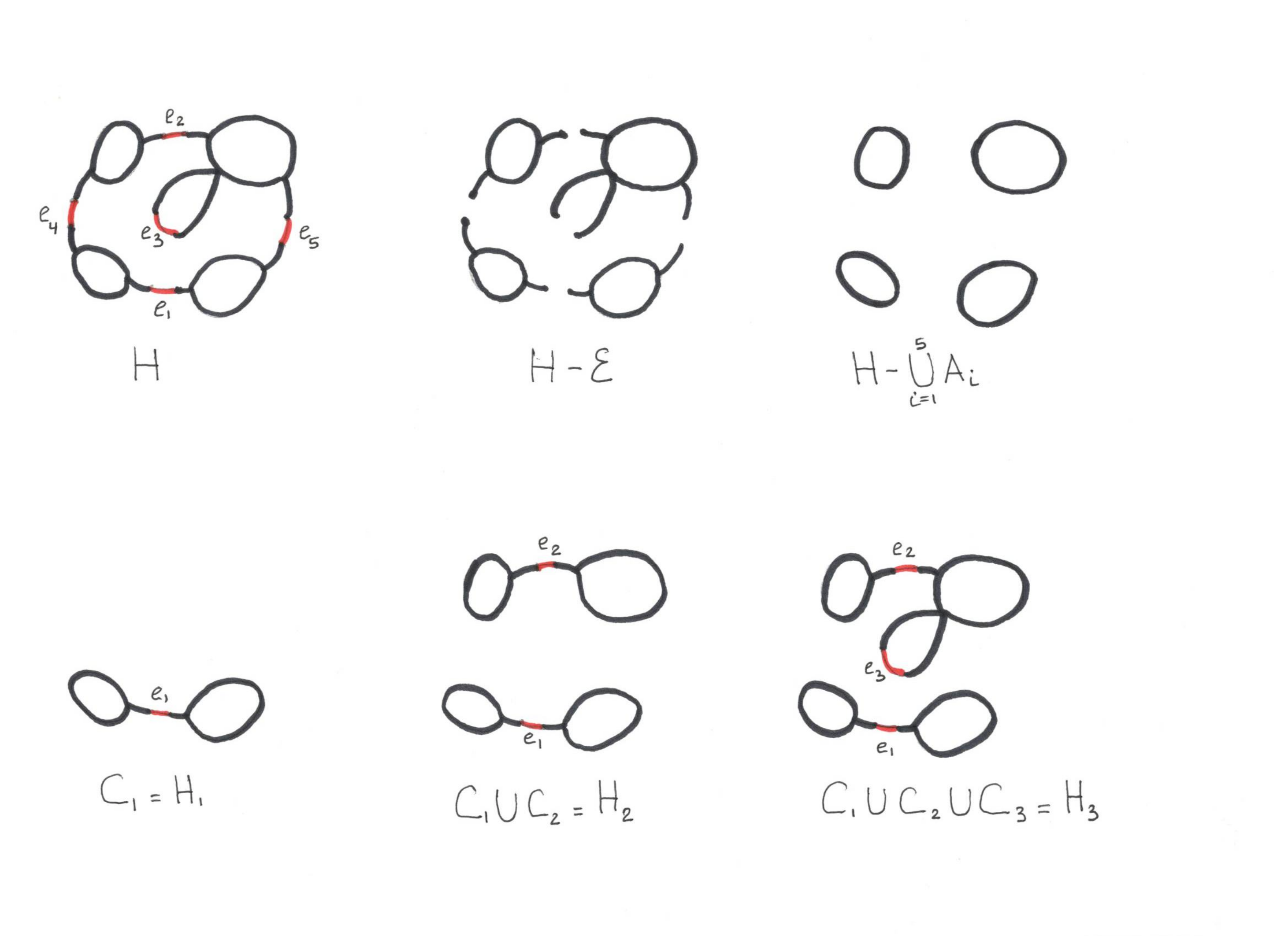}
\caption[Example of Generalized Echelon Construction]{\label{fig:graph_11}
Example of Generalized Echelon Construction}
\end{figure}
A subgroup $\mathcal{H}\leq \mathcal{F}$ is \textit{generalized echelon} if its corresponding immersion is reduced echelon.
\noindent See Example~\ref{fig:graph_1}.
\end{defn}
\begin{thm}\label{prop:echelon graph to subgroup}
Generalized echelon graphs are inert
\end{thm}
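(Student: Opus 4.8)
=== PROOF PROPOSAL ===

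The plan is to show that for a generalized echelon graph $H$, one can choose a left-order on $\mathcal{F}$ so that the maximal essential set $\mathcal{H}\setminus\mathcal{B}\widetilde{H}$ maps injectively to $B$, and then invoke Corollary~\ref{cor:hncccc}. The key insight is that the specific maximal essential set $\mathcal{E}=\{e_1,\ldots,e_m\}$ guaranteed by Definition~\ref{defn:reduced echelon}, whose labels are already distinct by condition (1), can be realized as the set of (images of) bridges for a suitably chosen order. So the strategy is to construct an order on $\widetilde{B}^1$, witnessed through the lexicographic recipe described before Lemma~\ref{lem:hnc}, that forces exactly the lifts of $e_1,\ldots,e_m$ to be the bridges of $\widetilde{H}$.

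First I would recall from Lemma~\ref{lem:hnccc} that $\mathcal{H}\setminus\mathcal{B}\widetilde{H}$ is a maximal essential set of size $\mrank(H)=m$, so its image in $H$ is a maximal essential set of $H$ consisting of the edges below the bridges. By Corollary~\ref{cor:hncccc}, inertness follows once this image maps injectively to $B$, i.e. once the $m$ bridge-edges carry $m$ distinct labels. The heart of the argument is therefore to exhibit an $\mathcal{F}$-invariant order on $\widetilde{B}^1$ whose induced bridge set on $\widetilde{H}$ descends to precisely $\mathcal{E}$. I would build this order using the nested structure $H_1\subset H_2\subset\cdots\subset H_m=H$ from the definition: the filtration by the $C_j$ mirrors an ``echelon'' triangular structure, and condition (2) --- that the labels of the later essential edges $e_{i+1},\ldots,e_m$ do not appear anywhere in $H_i$ --- is exactly what lets me order the orbit-representatives $\mathcal{D}$ so that the edge $e_i$ dominates its entire component $C_i$ while remaining comparable in the prescribed way to the edges added at later stages.

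Concretely, I would order the set $\mathcal{D}$ of orbit representatives (equivalently, the labels $B^1$) so that $\ell(e_1)<\ell(e_2)<\cdots<\ell(e_m)$ and so that every non-essential label in $C_i$ sits below $\ell(e_i)$; condition (2) guarantees this is consistent across the filtration, since a label appearing in $H_i$ cannot coincide with any $\ell(e_{i+1}),\ldots,\ell(e_m)$. Lifting to $\widetilde{B}$ via the lexicographic order $\widetilde{B}^1\cong\mathcal{F}\times\mathcal{D}$, I claim each lift $\tilde e_i$ becomes the largest edge of some line $L_i\subset\widetilde{H}$, hence a bridge (Lemma~\ref{lem:hnc} ensures uniqueness within a line). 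I would verify that $\widetilde{H}$ contains no bridges outside the $\mathcal{H}$-orbits of the $\tilde e_i$ by applying Lemma~\ref{lem:hncc} to the forest obtained by deleting these orbits: since $\mathcal{E}$ is essential, every component of $H-\mathcal{E}$ is cyclic (rank $1$, by Lemma~\ref{lem:essential set}), so the complementary forest carries no further bridges. Finally, because $\ell|_{\mathcal{E}}$ is injective the descended bridge set $\mathcal{H}\setminus\mathcal{B}\widetilde{H}\to B$ is injective, and Corollary~\ref{cor:hncccc} yields inertness.

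I expect the main obstacle to be the bridge-identification step: proving that the chosen lexicographic order really makes each $\tilde e_i$ a bridge and, crucially, that \emph{no} other edges of $\widetilde{H}$ become bridges. The forward direction (each $\tilde e_i$ is a bridge) should follow by exhibiting an explicit line through $\tilde e_i$ inside the lift of the rank-one component $C_i$, where $\tilde e_i$ is maximal by construction. The delicate part is the converse --- ruling out extraneous bridges --- which is where condition (2) must do its real work: I would argue that any line $L\subset\widetilde{H}$ projects into some $C_i$, and within $C_i$ the ordering was rigged so that the maximal edge of $L$ lies in the $\mathcal{H}$-orbit of $\tilde e_i$. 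Handling the interaction between the $\mathcal{F}$-coordinate and the $\mathcal{D}$-coordinate of the lexicographic order along an arbitrary line, and confirming that the filtration condition prevents a later-stage label from ``sneaking in'' as a spurious maximum, is the step I anticipate requiring the most care.
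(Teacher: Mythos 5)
Your overall strategy --- order $B^1$ so that the non-essential labels come first and $\ell\left(e_1\right)<\cdots<\ell\left(e_m\right)$ come last, pass to the lexicographic order on $\widetilde{B}^1$, and invoke Corollary~\ref{cor:hncccc} --- is exactly the paper's. But the central claim you make, that with this order \emph{each lift $\tilde e_i$ of the original essential edge $e_i$} becomes the largest edge of some line and hence a bridge, is a genuine gap, and in fact is not the statement one should try to prove. The label $\ell\left(e_i\right)$ may occur on several edges of $C_i$ (nothing in Definition~\ref{defn:reduced echelon} forbids it from appearing inside the rank-one islands attached to $e_i$), and which of the lifts of these equally-labelled edges is maximal in a given line is decided by the $\mathcal{F}$-coordinate of the lexicographic order, over which you have no control once $<_{_\mathcal{F}}$ is fixed. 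So there is no reason the bridge should lie over $e_i$ itself, and your final step (``because $\ell|_{\mathcal{E}}$ is injective the descended bridge set is injective'') does not follow, since you have not identified the descended bridge set with $\mathcal{E}$.

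The paper's proof repairs exactly this point. Since $\mrank\left(C_i\right)=1$, Lemma~\ref{lem:hncc} forces $\widetilde{C}_i$ to contain \emph{some} bridge $\tilde e_i'$; the line witnessing it must project onto all of $C_i$, and because the order compares the $\mathcal{D}$-coordinate first, the image $e_i'$ of that bridge must carry the largest label occurring in $C_i$, which condition (2) of Definition~\ref{defn:reduced echelon} guarantees is $\ell\left(e_i\right)$. One then replaces each $e_i$ by $e_i'$ and checks via Lemma~\ref{lem:maximal essential} that $\left\{e_1',\ldots,e_m'\right\}$ is still maximal essential with pairwise distinct labels; counting against Lemma~\ref{lem:hnccc} (there are exactly $m$ orbits of bridges) identifies this set with $\mathcal{H}\setminus\mathcal{B}\widetilde{H}$ and gives the required injectivity. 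Relatedly, your argument that no extraneous bridges arise by ``applying Lemma~\ref{lem:hncc} to the complementary forest'' invokes the converse of that lemma, which is not stated; it is the counting via Lemma~\ref{lem:hnccc} that actually rules out extra bridges.
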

\begin{proof}
By Corollary~\ref{cor:hncccc}, to show that $\ell$ is inert, it is sufficient to find:
\begin{enumerate}
\item A maximal essential set $\mathcal{E}'$ of $H$ such that $\ell|_{\mathcal{E}'}$ is injective, 
\item An $\mathcal{F}$-invariant ordering of the universal cover $\widetilde{B}\rightarrow B$ such that the pre-image of $\mathcal{E}'$ in $\widetilde{H}\subset\widetilde{B}$ is a set of bridges. 
\end{enumerate}
To that end, let $<_{_\mathcal{F}}$ be a left-invariant order on $\mathcal{F}$ and $<_{_\mathcal{D}}$ be a total order on $\mathcal{D}$, where $\mathcal{D}$ is as defined as in Section~\ref{sec:D}. Since $\mathcal{D}$ is in one-to-one correspondence with $B^1$, the order on $\mathcal{D}$ is defined by the order on $B^1=\left\{a_1,\ldots,a_{n-m},\ell\left(e_1\right),\ldots,\ell\left(e_{m}\right)\right\}$ which we define as  
$$a_1\ <_{_\mathcal{D}}\ \ldots\  <_{_\mathcal{D}}\ a_{n-m}\ <_{_\mathcal{D}}\ \ell\left(e_1\right)\ <_{_\mathcal{D}}\ \ell\left(e_2\right)\ <_{_\mathcal{D}}\ \ldots\ <_{_\mathcal{D}}\ \ell\left(e_{m}\right)$$ 
where $a_1,\ldots,a_{n-m}$ are the edges in $B^1-\ell\left(\mathcal{E}\right)$. Note that the ordering of $a_1,\ldots,a_{n-m}$ is irrelevant as long as we require that $a_j<_{_\mathcal{D}}\ell\left(e_i\right)$ for all $e_i\in \mathcal{E}$. Then $<_{_\mathcal{D}}$ and $<_{_\mathcal{F}}$ induce a left-invariant lexicographic order $<$ on $\widetilde{B}^1\cong \mathcal{D}\times \mathcal{F}$ as described in Remark~\ref{rem:lexico}.\\
Consider the essential edge $e_i\in\mathcal{E}$ for each $i$ in increasing order. Let $\widetilde{C}_i\subset \widetilde{H}$ be the universal cover of $C_i$ and $\stab_H(\widetilde{C}_i)$ be its stabilizer, so $C_i=\stab_H(\widetilde{C}_i)\setminus \widetilde{C}_i$. By definition, $\mrank\left(C_i\right)=1$, thus by Lemma~\ref{lem:hncc}, $\widetilde{C}_i$ must contain a bridge $\tilde{e}_i'$ whose image under the quotient map is some essential edge $e_i'\in C_i$. Moreover, since $\widetilde{C}_i\rightarrow C_i$ is a covering map, the line $L_i\subset \widetilde{C}_i$ in which $\tilde{e}_i'$ is a bridge must project surjectively onto $C_i$. Indeed, if $L_i\rightarrow C_i$ is not surjective, then $\stab_H(L_i)\setminus L_i$ must have rank $<2$ making $e_i'$ non-essential. By our ordering $<$, it must be the case that $\ell\left(e_i'\right)=\ell\left(e_i\right)$, since $e_i'$ is the image a bridge, and $\ell\left(e_i\right)$ is the largest label in $C_i$.\\
Let $\mathcal{E}_i=\left\{e_1',\ldots,e_i'\right\}\cup\left\{e_{i+1},\ldots,e_m\right\}$. Then $\ell|_{\mathcal{E}_i}$ is injective. By Lemma~\ref{lem:maximal essential}, $\mathcal{E}_i$ remains maximal essential in $H$. In particular, $\mathcal{E}_m$ is maximal essential that maps injectively into $B^1$, and lifts to bridges in $\widetilde{H}$. Therefore $\ell: H\rightarrow B$ is inert.
\end{proof}
\noindent Consider the graph in Figure~\ref{fig:graph_1}.
\begin{figure}[t]\centering
\includegraphics[width=.4\textwidth]{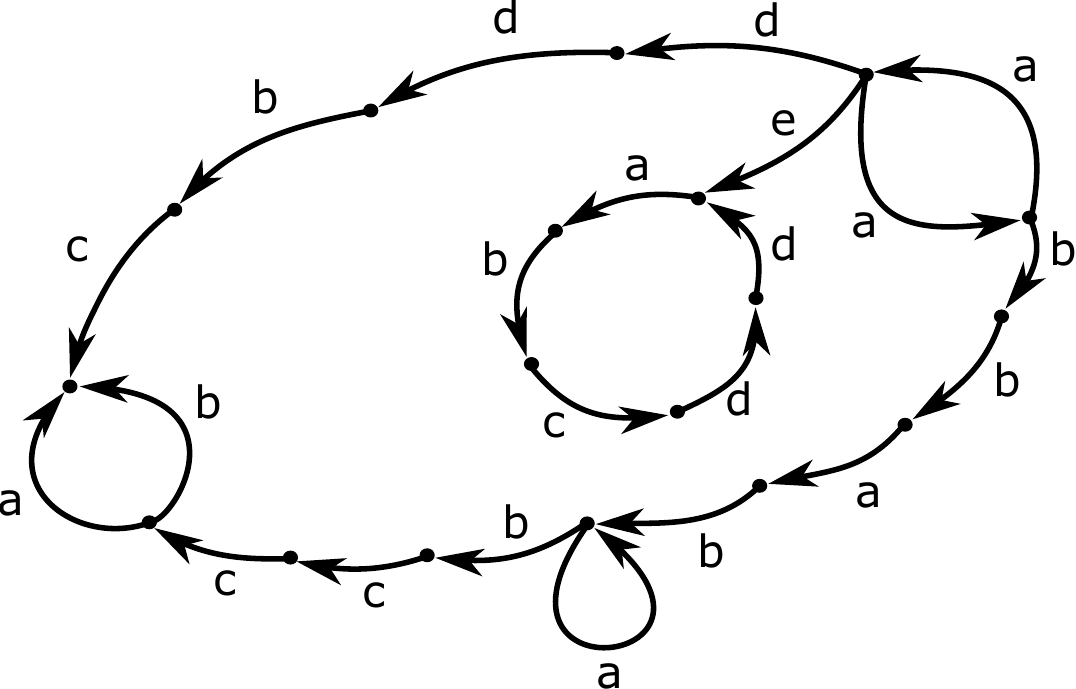}
\caption[Reduced Echelon Graph]{\label{fig:graph_1}
Reduced Echelon graph $H$}
\end{figure}
Let $\mathcal{E}=\left\{e_1,e_2,e_3,e_4\right\}$ be a maximal essential set with labels $\ell\left(\mathcal{E}\right)=\left\{b,c,d,e\right\}$ respectively, as shown in Figure~\ref{fig:graph_2}. Let $a<_{_\mathcal{D}}b<_{_\mathcal{D}}c<_{_\mathcal{D}}d<_{_\mathcal{D}}e$ be an ordering of $B^1$. Then the lifts of $e_1$ are bridges in the translates of the labelled line
 $$\ldots aaab\textbf{b}abaaa\dots$$
 Similarly, $e_2$ lifts to bridges in the translates of the labelled line 
  $$\ldots aaab\textbf{c}cab^{-1}ab^{-1}ab^{-1}\dots$$
\begin{figure}[t]\centering
\includegraphics[width=.5\textwidth]{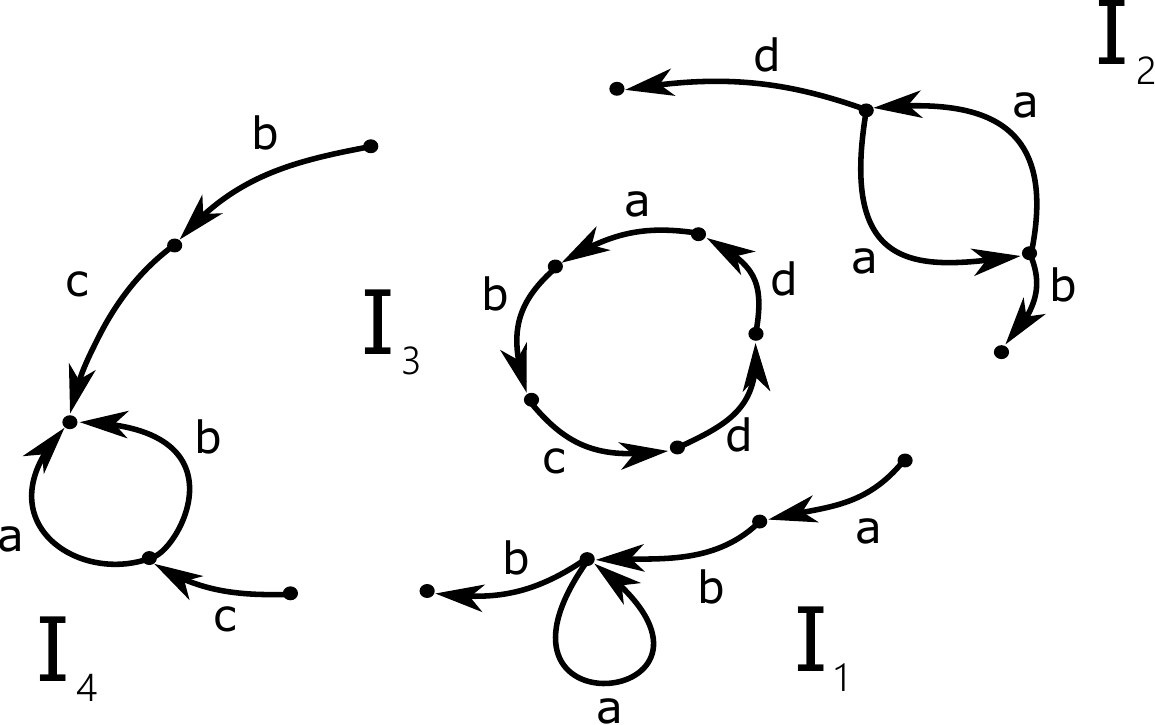}
\caption[Example of $H-\mathcal{E}$]{\label{fig:graph_2}
$H-\mathcal{E}$}
\end{figure}
\begin{cor}\label{cor:trivial}
Rank $2$ graphs (subgroups) are generalized echelon.
\end{cor}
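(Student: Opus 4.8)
The plan is to reduce the entire statement to a reduced-rank count. Recall that a connected graph $H$ representing a rank $2$ subgroup has $\chi(H) = -1$, so its reduced rank is $\mrank(H) = \max\{0, -\chi(H)\} = 1$. Since a maximal essential set $\mathcal{E}$ always satisfies $|\mathcal{E}| = \mrank(H)$, every maximal essential set of $H$ consists of a single edge. Such a set does exist: by Lemma~\ref{lem:hnccc} the set $\mathcal{H} \setminus \mathcal{B}\widetilde{H}$ is maximal essential of cardinality $\mrank(H) = 1$, and more elementarily any edge whose removal leaves a connected graph of rank $1$ furnishes one.

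Writing $\mathcal{E} = \{e_1\}$, so that $m = 1$ in the notation of Definition~\ref{defn:reduced echelon}, I would then verify the two defining conditions directly. Condition (1) requires $\ell|_{\mathcal{E}}$ to be injective, which is automatic for a one-element set. Condition (2) requires $\ell(H_i) \cap \ell(\{e_{i+1}, \ldots, e_m\}) = \emptyset$ for each index $i$; the only value available is $i = 1 = m$, for which $\{e_{i+1}, \ldots, e_m\} = \emptyset$, so the intersection is empty vacuously. Hence $H$ satisfies both requirements and is generalized echelon.

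There is essentially no obstacle here: the content of the corollary is exactly the observation that when $\mrank = 1$ the staircase structure of a generalized echelon graph degenerates to a single step, at which point both conditions hold trivially. The only point deserving care is the bookkeeping identity $\mrank(H) = 1$ for rank $2$ graphs together with the \emph{existence} (not merely possible emptiness) of a maximal essential set, so that a genuine witness $\mathcal{E} = \{e_1\}$ is produced rather than an empty one. Finally I would record the payoff: combining this with Theorem~\ref{prop:echelon graph to subgroup} shows that every rank $2$ graph is inert. This strengthens Remark~\ref{rem:rank two HNC}, which could conclude inertness in the two-branching-vertex case only under the extra hypothesis that the two links are distinct; the generalized echelon framework removes that hypothesis and thereby yields the Hanna Neumann bound for every rank $2$ subgroup.
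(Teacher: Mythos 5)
Your proof is correct and follows essentially the same route as the paper: for a rank $2$ graph $\mrank(H)=1$, so a maximal essential set is a single edge $\{e_1\}$ (with $C_1=H_1=H$), and both conditions of Definition~\ref{defn:reduced echelon} hold trivially. Your extra care about the existence of the witness $\mathcal{E}=\{e_1\}$ and the closing remark on inertness of rank $2$ subgroups are sound additions but do not change the argument.
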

\begin{proof}
Let $H$ be a graph of rank $2$. Then it has only one essential edge. So $|\mathcal{E}|=1$. Then $\ell|_\mathcal{E}$ is trivially injective, and $\ell\left(H_1\right)\cap\ell\left(\emptyset\right)=\emptyset$ also holds trivially. Note that in this case, we have $C_1=H_1=H$. 
\end{proof}
\begin{cor}
By Lemma~\ref{lem:inert iff}, generalized echelon subgroups are inert.
\end{cor}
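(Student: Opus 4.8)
The plan is to chain together the two immediately preceding results with the definition of a generalized echelon subgroup, so the argument is essentially formal. By Definition~\ref{defn:reduced echelon}, a subgroup $\mathcal{H}\leq\mathcal{F}$ is generalized echelon precisely when the immersed graph $H\rightarrow B$ representing it is a generalized echelon graph. The first step is therefore just to unpack this definition: given a generalized echelon subgroup $\mathcal{H}$, its associated immersion $\ell:H\rightarrow B$ satisfies conditions $(1)$ and $(2)$ of Definition~\ref{defn:reduced echelon}.

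Next I would invoke Theorem~\ref{prop:echelon graph to subgroup}, which establishes that every generalized echelon graph is inert as an immersion into $B$. This is the substantive input, already proved above via the Mineyev--Dicks ordering machinery (exhibiting a maximal essential set $\mathcal{E}_m$ that injects into $B^1$ and lifts to bridges in $\widetilde{H}$), so at this point no further work on the graph side is required.

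Finally, Lemma~\ref{lem:inert iff} supplies the bridge between the graph-theoretic and group-theoretic notions of inertness: an immersion $H\rightarrow B$ is inert if and only if the subgroup $\mathcal{H}=\pi_1(H)$ it represents is inert. Applying the $(\Rightarrow)$ direction to the inert immersion obtained in the previous step yields that $\mathcal{H}$ is inert, which completes the argument.

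Since this is a formal corollary that merely composes two earlier statements, I do not anticipate a genuine obstacle. The only point requiring care is the bookkeeping at the very outset: one must check that the phrase ``its corresponding immersion is reduced echelon'' in Definition~\ref{defn:reduced echelon} refers to exactly the notion of generalized echelon graph governed by Theorem~\ref{prop:echelon graph to subgroup}, so that the hypothesis of that theorem is met verbatim. Once this matching is confirmed, the two-line deduction goes through.
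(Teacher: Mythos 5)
Your argument is exactly the paper's intended one: the corollary is stated with no separate proof precisely because it follows by combining Theorem~\ref{prop:echelon graph to subgroup} (generalized echelon graphs are inert) with Lemma~\ref{lem:inert iff} (the graph is inert if and only if the subgroup is), just as you describe. The proposal is correct and takes essentially the same approach.
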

\subsection{Echelon Subgroups}
\noindent The following definitions are due to Rosenmann \cite{MR3245107}:
\begin{defn}[Echelon Form]\label{defn:echelon form}
Let $X=\left\{x_1,\ldots,x_n\right\}$ be an ordered free basis of $\mathcal{F}$ and $\mathcal{H}\leq \mathcal{F}$ be a subgroup. Then $\mathcal{H}$ is in \textit{echelon form} with respect to $X$ if there exists an ordered basis of $\mathcal{H}$,  $\left<y_1,\ldots,y_m\right>$, such that each generator $y_i$ contains at least one element of $X$ that has not appeared in $y_1,\ldots,y_{i-1}$.
\end{defn}
\noindent For example if $X=\left\{a,b,c,d,e\right\}$, then $\mathcal{H}=\left<ab,a^2cb,ce\right>$ is in echelon form with respect to $X$.
\begin{defn}[Echelon Subgroup]\label{defn:echelon subgroup}
A subgroup $\mathcal{H}\leq \mathcal{F}$ is an \textit{echelon subgroup} if it is in echelon form with respect to some basis of $\mathcal{F}$.
\end{defn}
\noindent We extend this definition to graphs as follows: a graph $H$ is \textit{echelon} if $\pi_1H$ is echelon.\\
\noindent In \cite{MR3245107}, it was shown that echelon subgroups arise as images of series of particular endomorphisms $\mathcal{F}\rightarrow \mathcal{F}$, called $1$-\textit{generator} endomorphisms, which were shown to have inert images. Below, we show inertness of echelon subgroups by showing that the graphs representing them are generalized echelon, thus inert.\\
Let $\mathcal{H}=\left<y_1,\ldots,y_m\right>$ be an echelon subgroup with respect to $\mathcal{F}=\left<x_1,\ldots,x_n\right>$. Denote by $x^{i}$ a label in $y_{i}$ that does not appear in $y_j$ for all $j<i$. Let $\ell:\left(H,v\right)\rightarrow B$ be the based immersion representing $\mathcal{H}$. As in Section~\ref{sec:graphs}, let $p_i:Y_i\rightarrow \left(H,v\right)$ be the based cycle corresponding to the generator $y_i$. Denote by $e^{i}$ the edge of the cycle $p_i$ such that $\ell\left(e^{i}\right)=x^{i}$. Consider the set of edges $\mathcal{E}=\left\{e^2,\ldots,e^m\right\}$. Then we have the following lemma: 
\begin{lem}\label{lem:ech}
The set of edges $\mathcal{E}=\left\{e^2,\ldots,e^m\right\}$ is maximal essential in $H$. 
\end{lem}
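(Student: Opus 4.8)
The plan is to verify the definition of maximal essential directly. Since $\langle y_1,\dots,y_m\rangle$ is a \emph{basis} of $\mathcal H$ (this is exactly what echelon form provides), the graph $H$ is connected of rank $m$, so $\mrank(H)=m-1=|\mathcal E|$. Hence $\mathcal E$ is maximal essential precisely when it is essential, i.e. precisely when $\mrank(H-\mathcal E)=\mrank(H)-|\mathcal E|=0$; and since $\mrank(H-\mathcal E)=\sum_C\max\{0,\rank(C)-1\}$ over the components $C$, this holds exactly when every component of $H-\mathcal E$ has rank at most $1$. So the whole statement reduces to the single claim that deleting the $m-1$ edges $e^2,\dots,e^m$ destroys every component of rank $\ge2$.

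First I would pass to first homology. Since $H$ is a graph, $H_1(H;\mathbb Z)\cong\mathcal H^{\mathrm{ab}}\cong\mathbb Z^m$, and because $y_1,\dots,y_m$ is a free basis of $\mathcal H=\pi_1 H$ (also visible from the folding $\bigvee_i Y_i\to H$ being $\pi_1$-surjective by Lemma~\ref{lem:folding}, between free groups of equal rank $m$, hence an isomorphism), the classes $[Y_1],\dots,[Y_m]$ of the defining cycles form a $\mathbb Z$-basis of $H_1(H)$. Writing a $1$-cycle $z\in H_1(H)\subset C_1(H)=\mathbb Z^{H^1}$ by its edge-coefficients $z_e$, I would consider the coordinate map $\rho\colon H_1(H)\to\mathbb Z^{\mathcal E}$, $\rho(z)=(z_{e^2},\dots,z_{e^m})$. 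Its kernel is exactly the group of cycles supported off $\mathcal E$, namely $H_1(H-\mathcal E)$.

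The key computation is the matrix of $\rho$ in the basis $\{[Y_i]\}$, whose entry $[Y_i]_{e^j}$ is the net number of times $Y_i$ traverses $e^j$. Because $x^j$ does not occur in $y_i$ for $j>i$, the cycle $Y_i$ uses no edge labelled $x^j$, so $[Y_i]_{e^j}=0$ for $j>i$; and because $x^i$ labels a single edge $e^i$ of $Y_i$ (and no other edge of $Y_i$, carrying a different label, can map onto $e^i$), we get $[Y_i]_{e^i}=\pm1$. Thus the $(m-1)\times(m-1)$ block of $\rho$ on the columns $i=2,\dots,m$ is triangular with $\pm1$ on the diagonal, so $\rho$ is surjective. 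Consequently $H_1(H-\mathcal E)=\ker\rho$ has rank $m-(m-1)=1$; since $\dim H_1(H-\mathcal E)=\sum_C\rank(C)$, exactly one component has rank $1$ and all others are trees. Hence $\mrank(H-\mathcal E)=0$, and $\mathcal E$ is maximal essential.

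The main obstacle is the diagonal claim $[Y_i]_{e^i}\ne0$: one must know that $Y_i$ traverses the new-letter edge a net nonzero number of times, which is precisely what the well-definedness of ``the edge $e^i$'' (the letter $x^i$ occurring a single time in $y_i$) guarantees. This should be stated carefully, since if $x^i$ occurred with cancelling occurrences folded onto one edge the diagonal entry could vanish and the triangular determinant would collapse. The remaining points — that the off-diagonal vanishing $[Y_i]_{e^j}=0\ (j>i)$ is exactly the echelon condition, that the $[Y_i]$ genuinely form a basis, and that a rank-$1$ kernel forces every component of $H-\mathcal E$ to have rank $\le1$ — are then routine.
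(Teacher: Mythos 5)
Your reduction is fine (with $|\mathcal E|=m-1=\mrank(H)$, maximal essentiality is exactly $\mrank(H-\mathcal E)=0$, i.e.\ every component of $H-\mathcal E$ has rank $\le 1$), and the identifications $\ker\rho=H_1(H-\mathcal E)$ and $\dim H_1(H-\mathcal E)=\sum_C\rank(C)$ are correct, as is the vanishing $[Y_i]_{e^j}=0$ for $j>i$. But the step you yourself flag as the main obstacle is a genuine gap, not a matter of careful wording: the diagonal entry $[Y_i]_{e^i}$ can be $0$. The echelon condition only says the letter $x^i$ \emph{occurs} in $y_i$; it may occur with algebraically cancelling signs, and those occurrences can fold onto the single edge $e^i$. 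Concretely, take $\mathcal F=\langle a,b\rangle$ and $\mathcal H=\langle a,\,bab^{-1}\rangle$, which is echelon with $x^2=b$. After folding, $H$ is two $a$-loops joined by one $b$-edge $e^2$, and $[Y_2]_{e^2}=1-1=0$, so $\rho$ is the zero map and $\ker\rho$ has rank $2$, not $1$. The lemma is still true here ($H-\mathcal E$ is two rank-$1$ loops, so $\mrank(H-\mathcal E)=0$), but your invariant cannot see this: $\dim H_1(H-\mathcal E)=2$ is equally consistent with a single rank-$2$ component, which would make $\mathcal E$ inessential. Since homology only records the total $\sum_C\rank(C)$ and not how rank is distributed among components, the approach cannot be patched by working over $\mathbb Q$ or $\mathbb Z/2$ (in the example the mod-$2$ coefficient also vanishes); some genuinely combinatorial input is required.

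The paper avoids homology entirely: it invokes its criterion that $\mathcal E$ is essential iff no component of $H-\mathcal E$ is a tree, supposes $T$ is a tree component, considers the edges of $\mathcal E$ incident to $T$ and the one $e^s$ among them with smallest superscript, and derives a contradiction with the echelon condition on labels by splitting on whether the basepoint lies in $T$ (if it does, the cycle $p_1$ must cross some $e^k$ with $k>1$, forcing $x^k$ to occur in $y_1$; if not, the cycle $p_s$ must exit $T$ through an edge of strictly larger superscript). You would need an argument of this tree-escaping type, tracking actual traversals rather than their signed sum, to close the gap.
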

\begin{proof}
Since $\mrank\left(H\right)=\left|\mathcal{E}\right|$, we only need to show that $\mathcal{E}$ is essential. By Lemma~\ref{lem:essential set}, it is sufficient to show that no component of $H-\mathcal{E}$ is a tree.\\
Suppose $T\subset \left(H-\mathcal{E}\right)$ is a tree component. Let $\mathcal{E}'=\left\{e\in \mathcal{E}\mid o\left(e\right)\in T^0\ \text{or}\ \tau\left(e\right)\in T^0\right\}$,  ie: the set of edges of $\mathcal{E}$ incident on $T$.  Let $e^s$ be the edge of $\mathcal{E}'$ with the lowest superscript. Now, the base vertex $v$ of $H$ is either in $T$ or not. Observe that $v$ cannot be in $T$, for otherwise the cycle $p_1$ would include an edge $e^k\in\mathcal{E}'$ with $k>1$ meaning that the generator $y_1$ would contains the label $x^k$ which is a contradiction. On the other hand, if $v\notin T$, then once again we have a contradiction since the cycle $p_s$ must include an edge of $\mathcal{E}'$ of higher superscript. Hence no component is a tree. 
\end{proof}
\begin{rem}\label{rem:echh}
By construction, $\ell|_\mathcal{E}$ is injective.
\end{rem}
\begin{prop}
Echelon graphs are generalized echelon.
\end{prop}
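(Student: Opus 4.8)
The plan is to verify the two conditions of Definition~\ref{defn:reduced echelon} directly, using the maximal essential set already produced for echelon graphs. Write $\mathcal{H}=\langle y_1,\dots,y_m\rangle$ with fresh letters $x^1,\dots,x^m$ as in the setup, and take $\mathcal{E}=\{e^2,\dots,e^m\}$. By Lemma~\ref{lem:ech} this set is maximal essential, and by Remark~\ref{rem:echh} $\ell|_{\mathcal{E}}$ is injective, so condition~(1) holds for free. I would order $\mathcal{E}$ by superscript, setting $e_i:=e^{i+1}$ (so there are $\mrank(H)=m-1$ of them and $\ell(e_i)=x^{i+1}$); everything then comes down to checking the nesting condition~(2), namely $\ell(H_i)\cap\{x^{i+2},\dots,x^m\}=\emptyset$ for each $i$.

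To handle~(2), I would introduce $G_{i+1}\subseteq H$, the subgraph swept out by the cycles $p_1,\dots,p_{i+1}$ representing $y_1,\dots,y_{i+1}$. Every edge of $G_{i+1}$ is traversed by some $p_l$ with $l\le i+1$, so $\ell(G_{i+1})$ is exactly the set of letters occurring in $y_1,\dots,y_{i+1}$; by the defining property of echelon form the fresh letters $x^{i+2},\dots,x^m=\ell(e_{i+1}),\dots,\ell(e_{m-1})$ occur in none of these words. Hence $\ell(G_{i+1})\cap\ell(\{e_{i+1},\dots,e_{m-1}\})=\emptyset$, and condition~(2) follows immediately once I establish the containment $H_i\subseteq G_{i+1}$. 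Since $G_{j+1}\subseteq G_{i+1}$ for $j\le i$ and $H_i=\bigcup_{j\le i}C_j$, it is enough to prove $C_j\subseteq G_{j+1}$ for every $j$.

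For $C_j\subseteq G_{j+1}$ I would assign to each edge its \emph{depth}, the least $l$ with the edge lying on $p_l$; then $G_{j+1}$ is the set of edges of depth $\le j+1$, and an edge of depth $\ge j+2$ necessarily carries a letter first introduced in $y_{j+2}$ or later. Recall that $C_j$ is the component of $e_j=e^{j+1}$ in $H-\bigcup_{k\ne j}A_k$, so it avoids the arcs of all other essential edges, in particular the arcs $A_{j+1},\dots,A_{m-1}$ of $e^{j+2},\dots,e^m$. I want to see that these deletions sever $e_j$ from every edge of depth $\ge j+2$: along any path in $C_j$ from $e_j$ to such an edge the depth must climb past $j+1$, and the first place where it does so should lie on the arc of some higher essential edge, which has been removed---contradicting connectivity inside $C_j$.

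The main obstacle is making this last step precise. Because the later cycles $p_l$ reuse the early letters, such a cycle can re-enter $G_{j+1}$ through old-labelled edges, so I cannot simply say that leaving $G_{j+1}$ forces an immediate jump to a fresh edge. The real content is to show that a later cycle can only escape $G_{j+1}$ toward its fresh edge $e^l$ through the (deleted) arc $A_{l-1}$, while the non-essential arcs carrying reused letters remain inside $G_{j+1}$; this is a connectivity-and-label bookkeeping argument in the spirit of the base-vertex and superscript analysis used in the proof of Lemma~\ref{lem:ech}, now applied to the subgraph $G_{j+1}$ together with its maximal essential set $\{e^2,\dots,e^{j+1}\}$. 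Once this separation is in hand, $C_j\subseteq G_{j+1}$, hence $H_i\subseteq G_{i+1}$, condition~(2) is verified, and the proof that echelon graphs are generalized echelon is complete.
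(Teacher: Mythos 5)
Your strategy coincides with the paper's: condition (1) of Definition~\ref{defn:reduced echelon} is immediate from Lemma~\ref{lem:ech} and Remark~\ref{rem:echh}, and condition (2) is reduced to showing that each $H_i$ lies inside the union of the cycles $p_1,\ldots,p_{i+1}$, whose labels miss the later fresh letters by the definition of echelon form. The paper dispatches this inclusion with the bare assertion that $C_i=p_i\left(Y_i\right)\cup p_1\left(Y_1\right)$ after folding; you correctly isolate it as the crux, sketch a depth/separation argument, and then concede that you cannot make the key step precise. That concession is a genuine gap: the proposal as written does not prove the proposition. Worse, the step cannot be made precise as stated, because the containment $C_j\subseteq G_{j+1}$ (equivalently the paper's claimed equality) is false in general.

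For a concrete failure, take $y_1=ab$, $y_2=cb$, $y_3=adada^{-1}b$, $y_4=adafb$, which is echelon with fresh letters $a,c,d,f$. After folding, $H$ is two theta-graphs wedged at a vertex $u$: three one-edge arcs labelled $a,b,c$ join the basepoint to $u$, while $u$ is joined to a vertex $t$ by two arcs each consisting of one $d$-edge and one $a$-edge, together with a one-edge arc labelled $f$. The fresh letter $d$ of $y_3$ labels two edges lying in \emph{different} arcs, because $p_4$ branches off the new portion of $p_3$ at an interior vertex. Whichever $d$-edge is chosen as $e^3$, its arc is deleted in forming the component of $e^2$, but the other $d$-labelled arc survives and is attached to $p_1\cup p_2$ at $u$; that arc lies on $p_3$ only, so your $C_1\not\subseteq G_2$, and moreover $\ell\left(H_1\right)\cap\left\{x^3,x^4\right\}=\left\{d\right\}\neq\emptyset$, so condition (2) fails outright for $\left\{e^2,e^3,e^4\right\}$ in its natural order. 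The graph is still generalized echelon (take the essential set with labels $d,c,f$ in that order), but reaching it requires re-selecting and reordering the essential edges, an argument neither your proposal nor the paper supplies. So the missing separation step is not deferred bookkeeping: the intermediate claim is wrong, and the proof must be rerouted, for instance by arguing about which representative of each fresh letter to take and in what order, or by working with labels rather than with containment of subgraphs.
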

\begin{proof}
Let $\mathcal{E}=\left\{e^2,\ldots,e^m\right\}$ be the essential set from Lemma~\ref{lem:ech}. For each $2\leq i\leq m$, let $A_i$ be the arc containing the essential edge $e^{i}$, $C_i$ be the component of $H-\displaystyle\bigcup_{j\neq i}A_j$ containing $e^{i}$, and $H_i=\displaystyle\bigcup_{j=2}^{i}C_j$. Observe that in this case,  $C_i=p_i\left(Y_i\right)\cup p_1\left(Y_1\right)$, after folding. So $H_i=\displaystyle\bigcup_{j=1}^{i}p_j\left(Y_j\right)$ which is simply the union of the cycles corresponding to the generators $y_1,\ldots,y_i$. We have by construction $\ell\left(H_i\right)\cap \left\{x^{i+1},\ldots,x^m\right\}=\emptyset$. By Lemma~\ref{lem:ech}, Remark~\ref{rem:echh}, and  Definition~\ref{defn:reduced echelon}, $H$ is generalized echelon.
\end{proof}
\begin{cor}
Echelon subgroups are generalized echelon.
\end{cor}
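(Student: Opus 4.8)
The plan is to observe that this corollary is a purely definitional consequence of the preceding Proposition (\emph{Echelon graphs are generalized echelon}) together with the dictionary established in Section~\ref{sec:graphs} between finitely generated subgroups of $\mathcal{F}$ and their representing immersed graphs. There is no genuine geometric content left to establish: all of the real work has already been done in constructing the maximal essential set $\mathcal{E}=\{e^2,\ldots,e^m\}$ of Lemma~\ref{lem:ech} and verifying the two conditions of Definition~\ref{defn:reduced echelon} for the representing graph.

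First I would let $\mathcal{H}\leq\mathcal{F}$ be an echelon subgroup and let $\ell:(H,v)\to B$ be the immersed graph representing it, so that $\pi_1(H,v)=\mathcal{H}$ exactly as in the construction of Section~\ref{sec:graphs}. By the extension of Definition~\ref{defn:echelon subgroup} to graphs (a graph is echelon when its fundamental group is echelon), the graph $H$ is an echelon graph precisely because $\pi_1 H=\mathcal{H}$ is an echelon subgroup. I would then invoke the preceding Proposition to conclude that $H$ is a generalized echelon graph in the sense of Definition~\ref{defn:reduced echelon}. Finally, by the clause of Definition~\ref{defn:reduced echelon} declaring a subgroup to be generalized echelon whenever its corresponding immersion is, we obtain that $\mathcal{H}$ itself is generalized echelon.

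The only point requiring any care — and it is a bookkeeping matter rather than a real obstacle — is that the word \emph{echelon} is used on one side for subgroups and on the other for graphs, so one must pass through the identification $\pi_1 H=\mathcal{H}$ in both directions. Since the graph representing a finitely generated subgroup is unique up to label-preserving isomorphism, this identification is unambiguous, and the corollary follows immediately from the Proposition.
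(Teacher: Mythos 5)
Your proposal is correct and matches the paper's intent exactly: the corollary is stated without proof precisely because it is the immediate definitional consequence of the preceding Proposition together with the subgroup--graph correspondence of Section~\ref{sec:graphs}, which is the route you take. Your added remark about passing through $\pi_1 H=\mathcal{H}$ is a reasonable bit of bookkeeping that the paper leaves implicit.
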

\noindent  We now give an example of a generalized echelon subgroup that is not echelon. But first we prove the following lemma:
\begin{lem}\label{lem:commutator}
Let $\mathcal{H}\leq \mathcal{F}$ be an echelon subgroup such that $\rank\left(\mathcal{H}\right)=\rank\left(\mathcal{F}\right)=n\geq 1$, and $\Phi:\mathcal{F}\rightarrow \mathbb{Z}^n$ be the abelianization homomorphism. Then $\Phi\left(\mathcal{H}\right)\neq \left\{0\right\}$.
\end{lem}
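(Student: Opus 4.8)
The plan is to reduce the statement to an analysis of the \emph{first} generator $y_1$ of an echelon basis. Since $\rank\left(\mathcal{H}\right)=\rank\left(\mathcal{F}\right)=n$, any echelon basis $\langle y_1,\ldots,y_n\rangle$ of $\mathcal{H}$ with respect to $X=\{x_1,\ldots,x_n\}$ consists of exactly $n$ generators. For each $i$ let $x^{i}\in X$ be a letter occurring in $y_i$ but not in $y_1,\ldots,y_{i-1}$, as furnished by Definition~\ref{defn:echelon form}. First I would observe that $x^1,\ldots,x^n$ are pairwise distinct: for $j<i$ the letter $x^{j}$ occurs in $y_j$, hence among $y_1,\ldots,y_{i-1}$, so $x^{i}\neq x^{j}$. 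As these are $n$ distinct elements of the $n$-element set $X$, we obtain $\{x^1,\ldots,x^n\}=X$; that is, every basis letter is the first-appearance letter of exactly one generator, and $x^{i}$ first occurs precisely in $y_i$.

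Next I would pin down which letters can occur in $y_1$. If a letter $x_j$ occurs in $y_1$, then its first occurrence among $y_1,\ldots,y_n$ is at index $1$. But by the previous step $x_j=x^{i}$ for a unique $i$, and $x^{i}$ first occurs in $y_i$; matching indices forces $i=1$, whence $x_j=x^{1}$. Consequently $y_1$ is a reduced word in the single letter $x^{1}$, i.e.\ $y_1=\left(x^{1}\right)^{k}$ for some integer $k$. Since $y_1$ is a member of a basis of the nontrivial group $\mathcal{H}$, it is nontrivial, so $k\neq 0$.

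Finally, abelianizing gives $\Phi\left(y_1\right)=k\,\Phi\left(x^{1}\right)$ in $\mathbb{Z}^n$. As $\Phi\left(x^{1}\right)$ is a standard basis vector and $k\neq 0$, this element is nonzero. Hence $\Phi\left(y_1\right)\in\Phi\left(\mathcal{H}\right)$ is nonzero, proving $\Phi\left(\mathcal{H}\right)\neq\{0\}$.

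The only genuinely delicate point is the second paragraph. Merely knowing that $x^{1}$ occurs in $y_1$ is \emph{not} enough, since a generator may contain a letter with zero total exponent (for instance $x^{1}x_2\left(x^{1}\right)^{-1}$), so its abelianization could vanish. What rescues the argument is the full-rank hypothesis which, through the counting in the first paragraph, forbids $y_1$ from involving any letter other than $x^{1}$, thereby pinning $y_1$ down to an honest power of a single basis element. I expect this counting step to be exactly where the assumption $\rank\left(\mathcal{H}\right)=\rank\left(\mathcal{F}\right)$ is indispensable: dropping it would allow $\mathcal{H}$ to sit inside the commutator subgroup.
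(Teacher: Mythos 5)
Your proof is correct and follows essentially the same route as the paper: both arguments use the full-rank hypothesis to force the $n$ first-appearance letters to exhaust the basis $X$, conclude that $y_1$ is a nonzero power of a single basis letter, and hence that $\Phi\left(y_1\right)\neq 0$. Your write-up simply makes explicit the counting step that the paper compresses into the sentence ``Since $\rank\left(\mathcal{H}\right)=n$, it must be the case that $y_1$ contains only one letter.''
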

\begin{proof}
By definition of echelon subgroups, each generator $y_i$ has a letter that does not appear in any $y_j$ for all $j<i$. Since $\rank\left(\mathcal{H}\right)=n$, it must be the case that $y_1$ contains only one letter. Thus, $\Phi\left(y_1\right)\neq 0$.
\end{proof}
\noindent Suppose $n=2$, that is $B$ is the bouquet of two circles with $\pi_1B=\mathcal{F}=\left<a,b\right>$. Consider the immersed graph $H\rightarrow B$ in Figure~\ref{fig:graph_12}. Then $\mathcal{H}=\left<aba^{-1}b^{-1},a^{-1}b^{-1}ab\right>$.
\begin{figure}[t]\centering
\includegraphics[width=.3\textwidth]{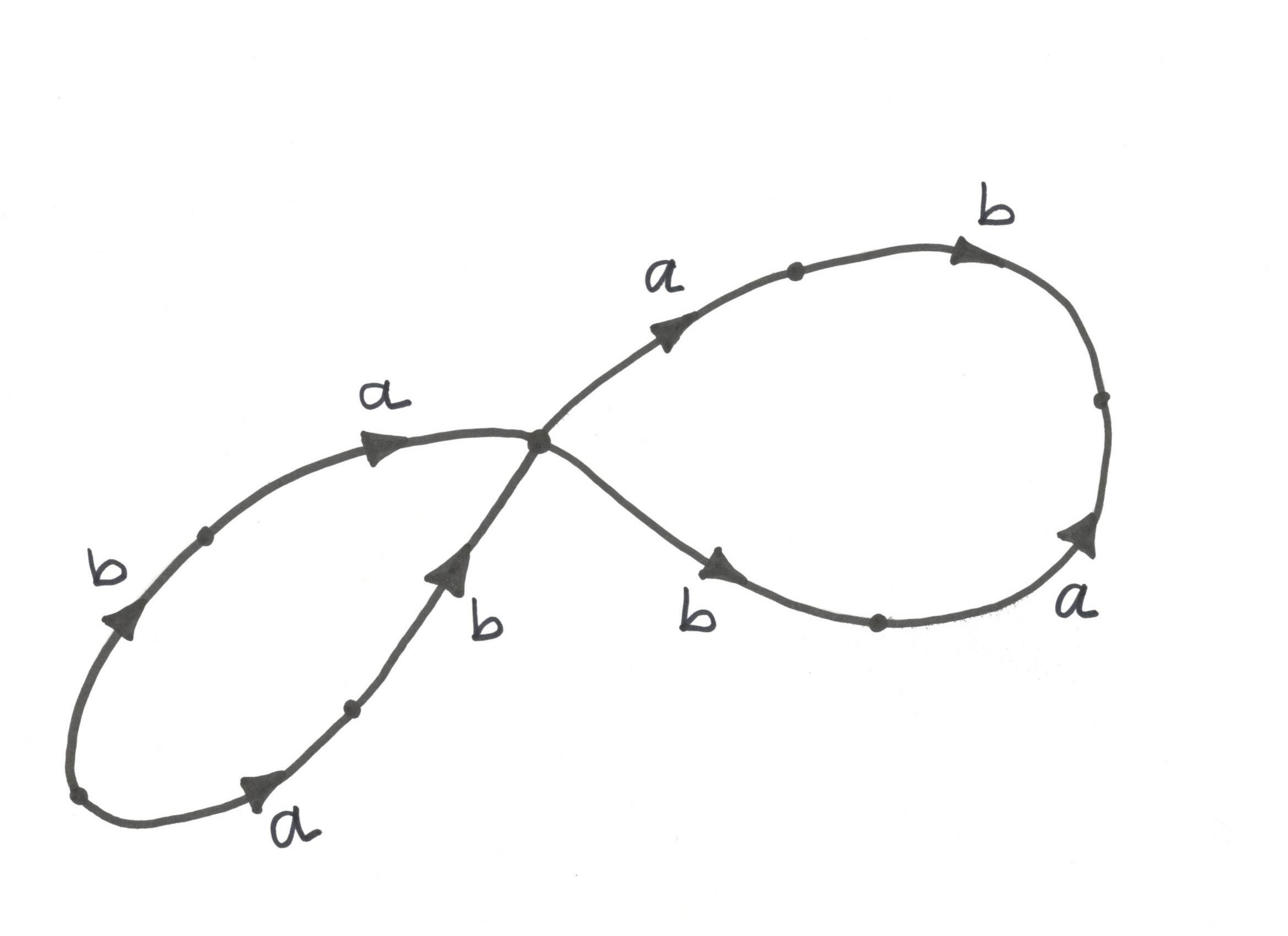}
\caption[Example of a generalized echelon graph that is not echelon.]{\label{fig:graph_12}
Example of a generalized echelon graph that is not echelon}
\end{figure}
By Corollary~\ref{cor:trivial}, $\mathcal{H}$ is generalized echelon. If $\Phi: \mathcal{F}\rightarrow \mathbb{Z}^2$ is the abelianization homomorphism, then $\Phi\left(\mathcal{H}\right)=\left\{0\right\}$, and by Lemma~\ref{lem:commutator} $\mathcal{H}$ is not echelon. In particular, for any automorphism $\Psi: \mathcal{F}\rightarrow\mathcal{F}$, we have $\Phi\circ\Psi\left(\mathcal{H}\right)=\left\{0\right\}$. That is, for any sequence of Nielsen transformations one can perform of the basis of $\mathcal{H}$, the resulting basis will always have its elements containing both letters $a$ and $b$. Therefore $\mathcal{H}$ cannot be put in echelon form.
\section{Further Questions}
\noindent We propose the following questions for further research:
\begin{enumerate}
\item In Section~\ref{sec:inert and compressed} we defined inertness of a set of vertices and showed that it implies inertness of the graph. We believe that a more general property can be defined for  arcs, and doing so will allow us to generate an even larger class of examples.
\item Do immersed bouquets of circles have injective sets of bridges? 
\end{enumerate}
%

\bibliographystyle{alpha}
\bibliography{brahim.bib}

\end{document}